\definecolor{darkgreen}{cmyk}{1,0,1,.2}
\definecolor{m}{rgb}{1,0.1,1}
\begin{document} 
%\vspace*{-2cm} 
{\tiny \today} 
\def\dach{\!\widehat{ \;\;\; }}
\def\A{\mathcal{A}} 
\def\G{\mathcal{G}} 
\def\SS{\mathcal{S}} 
\def\C{\mathcal{C}} 
\def\H{\mathcal {H}}
\def\d{\operatorname{d}}
\def\im{\operatorname{im}}
\def\iDelta{{\Delta\!\!\!\!\circ\,}}
\def\bDelta{\partial\Delta}
\def\const{\operatorname{const}}

\def\m{{\boldmath m}}
\def\l{{\boldmath l}}
\def\h{{\boldmath h}}
\def\x{{\boldmath x}}
\def\y{{\boldmath y}}
\def\ab{\operatorname{ab}}

\def\tg{\operatorname{tg}}

\def\Call{\mathcal C_{\operatorname{all}}}
\def\Csep{\mathcal C_{\operatorname{sep}}}
\def\Cnuc{\mathcal C_{\operatorname{nuc}}}
\def\Ccom{\mathcal C_{\operatorname{com}}}
\def\Ob{\operatorname{Ob}}
\def\Mor{\operatorname{Mor}} 
\def\Aut{\operatorname{Aut}} 
\def\cell{\operatorname{cell}} 
\def\Cell{\operatorname{Cell}} 
\def\incl{\operatorname{incl}} 
\def\sign{\operatorname{sign}} 
\def\Gx{X\rtimes G}  
\def\Bott{\operatorname{Bott}} \def\CC{\mathbb C} 
\def\ClV{C_{\!{}_{V}}} \def\comp{\operatorname{comp}} 
\def\ddS{\stackrel{\scriptscriptstyle{o}}{S}} 
\def\intW{\stackrel{\scriptscriptstyle{o}}{W}} 
\def\intU{\stackrel{\scriptscriptstyle{o}}{U}} 
\def\intT{\stackrel{\scriptscriptstyle{o}}{T}}
\def\intsi{\stackrel{\scriptscriptstyle{o}}{\sigma}}
\def\E{\mathcal E} 
\def\EE{\mathbb E} \def\EG{\mathcal{E}(G)} 
\def\Egx{\mathcal{E}(G)\times X} 
\def\EGG{\mathcal{E}(\mathcal{G})} \def\EGT{\mathcal{E}(G/G_0)}\def\Eg{\mathcal{E}(G)} 
\def\EGN{\mathcal{E}(G/N)} \def\EH{\mathcal{E}(H)} \def\F{\mathcal F} 
\def\I{{\operatorname{IND}}} \def\Ind{\operatorname{Ind}} 
\def\Id{\operatorname{Id}} 
\def\infl{\operatorname{inf}} \def\k{\operatorname{K}} 
\def\KK{\operatorname{KK}} \def\L{\mathcal L} \def\lk{\langle} 
\def\rk{\rangle} \def\NN{\mathbb N} \def\oplus{\bigoplus} 
\def\pt{\operatorname{pt}} \def\QQ{\mathbb Q} 
\def\res{\operatorname{res}} \def\RKK{\mathcal{R}\!\operatorname{KK}} 
\def\RR{\mathbb R} \def\sm{\backslash} \def\top{\operatorname{top}} 
\def\ZM{{\mathcal Z}M} \def\ZZ{\mathbb Z} \setcounter{section}{-1} 
\def\Inf{\operatorname{Inf}} \def\Ad{\operatorname{Ad}} 
\def\K{\mathcal K} \def\id{\operatorname{id}} 
\def\U{\mathcal U}
\def\V{\mathcal V}
\def\PU{\mathcal PU}
\def\BG{\operatorname{BG}}
\def\eps{\epsilon} 
\def\om{\omega}
\def\F{\mathcal{F}} 
\def\TT{\mathbb T}
%^{X\rtimes G}} 
%\def\F{\mathcal{F}^{X\rtimes G}} 
%\def\FH{\mathcal{F}^{X\rtimes H}} 
\def\Flk{\F_{L,K}} 
\def\mg{\mu_{G,A}} 
\def\mgx{\mu_{\Gx,A}} 
\def\ts{\otimes} 
\def\ga{\gamma} 
\def\Br{\operatorname{Br}}
\def\Ab{\operatorname{Ab}}
\def\ev{\operatorname{ev}}
\def\la{\lambda} 
\def\oDelta{\stackrel{\circ}{\Delta}}
\def\CKK{\mathcal K\mathcal K}
\emergencystretch= 30 pt
\theoremstyle{plain} \newtheorem{thm}{Theorem}[section] 
\newtheorem{cor}[thm]{Corollary} \newtheorem{lem}[thm]{Lemma} 
\newtheorem{prop}[thm]{Proposition} \newtheorem{lemdef}[thm]{Lemma and 
 Definition} \theoremstyle{definition} 
\newtheorem{defn}[thm]{Definition}  
\newtheorem{defremark}[thm]{\bf Definition and Remark} 
\theoremstyle{remark} 
\newtheorem{remark}[thm]{\bf Remark} \newtheorem{ex}[thm]{\bf Example} 
\newtheorem{notations}[thm]{\bf Notations} 
\numberwithin{equation}{section} \emergencystretch 25pt 
\renewcommand{\theenumi}{\roman{enumi}} 
\renewcommand{\labelenumi}{(\theenumi)} \title[Fibrations with noncommutative fibers]{Fibrations with noncommutative fibers}   
\author[Echterhoff]{Siegfried 
 Echterhoff} 
\address{S. Echterhoff: Westf\"alische Wilhelms-Universit\"at M\"unster, 
 Mathematisches Institut, Einsteinstr. 62 D-48149 M\"unster, Germany} 
\email{echters@math.uni-muenster.de}
\author[Nest]{Ryszard Nest}
\address{R. Nest: Department of Mathematics, University
 of Copenhagen, Universitetsparken 5, DK-2100 Copenhagen, Denmark}
 \email{rnest@math.ku.dk}
\author[Oyono-Oyono]{Herve Oyono-Oyono}
\address{H. Oyono-Oyono: Universit\'e de Blaise Pascal, Clermont-Ferrand} 
\email{oyono@math.univ-bpclermont.fr} 
\thanks{This work was partially supported by the Deutsche Forschungsgemeinschaft
(SFB 478)}

\begin{abstract} We study an analogue of fibrations of topological spaces with
the homotopy lifting property in the setting of $C^*$-algebra bundles.
We then derive an analogue of the Leray-Serre spectral sequence to
compute the $K$-theory of the fibration in terms of the
{cohomology} of the  
base and the $K$-theory of the fibres. We present many examples which show
that fibrations with noncommutative fibres appear in abundance in nature.
\end{abstract} 
\maketitle  
%\pagestyle{myheadings} \markboth{\textrm{le }\the\day /\the\month 
%  /\the\year }{\textrm{le }\the\day /\the\month /\the\year } 
%%%%%%%%%%%%%%%%%%%%%%%%%%%%%%%%%%%%%%%%%%%%%%%%%%%%%%%%%%%%%%%%%%%%% 

%%%%%%%%%%%%%%%%%%%%%%%%%%%%%%%%%%%%%%%%%%%%%%%%%%%%%%%%%%%%%%%%%%%%% 

\section{Introduction}

In recent years the study of the topological properties of C*-algebra bundles 
plays a more and more prominent r\^ole in the field of Operator algebras.
The main reason for this is two-fold: on one side there are many important 
examples of C*-algebras which do come with a canonical bundle structure. On the other side,
the study of C*-algebra bundles over a locally compact Hausdorff base space 
$X$ is the natural next step in classification theory, after the far reaching results
which have been obtained in the classification of simple C*-algebras.
To fix notation, by a {\em C*-algebra bundle} $A(X)$ over $X$ we shall simply mean 
a $C_0(X)$-algebra in the sense of Kasparov (see \cite{Kas2}): it is a C*-algebra 
$A$ together with a non-degenerate $*$-homomorphism 
$$\Phi: C_0(X)\to ZM(A),$$
where $ZM(A)$ denotes the center of the multiplier algebra $M(A)$ of $A$.
For such $C_0(X)$-algebra $A$, {the {\em fibre}   over  $x\in X$
 is then 
 $A_x=A/I_x$, where $$I_x=\{\Phi(f)\cdot a;\, a\in A \text{ and } f\in
 C_0(X)\text{ such that } f(x)=0\},$$ and the canonical quotient map
 $q_x:A\to A_x$ is called the {\em evaluation map}  at $x$}. We shall often write $A(X)$ to indicate the given $C_0(X)$-structure of $A$. 
We shall recall the basic constructions and properties of $C_0(X)$-algebras in the 
preliminary section below. We refer to \cite{ENO} for further notations concerning 
$C_0(X)$-algebras.

The main problem when studying bundles from the topological point of view is to 
provide good  topological invariants which help to understand the local and global
structure of the bundles. A good example is given by the class of separable
continuous-trace C*-algebras,
which are, up to Morita equivalence, just the section algebras of locally trivial bundles
over $X$ with fibres the compact operators $\K\cong \K(l^2(\NN))$.
Using the standard classification 
of fibre bundles, these algebras (or rather the underlying bundle structure) are classified 
up to Morita equivalence by a corresponding Dixmier-Douady class in $\check H^3(X,\ZZ)$.
Another interesting class of examples are the non-commutative principle torus bundles, 
which have been studied by the authors in \cite{ENO}. 
A basic example of a non-commutative principal  2-torus bundle
is given by the C*-algebra $C^*(H)$ of
the discrete rank 3 Heisenberg group $H$, which has a canonical structure of a C*-algebra
bundle over the circle $\TT$ where the  fibre $A_z$ over $z\in \TT$ is  the non-commutative 
2-torus $A_\theta$ if $z=e^{2\pi i\theta}$.
This shows in particular, that such bundles are in general far away from being section algebras 
of locally trivial C*-algebra bundles (but see \cite[\S 2]{ENO} for a classification based on classical methods).

The main purpose of \cite{ENO} was the 
 study of the $K$-theoretic properties of the principle non-commutative $\TT^n$-bundles
  after forgetting the $\TT^n$-actions.
 Using Kasparov's $\RKK(X;\cdot,\cdot)$-theory as the version of $K$-theory which 
 is probably most adapted to the study C*-algebra bundles, we show in 
 \cite[Corollary 3.4]{ENO} that the non-commutative $\TT^n$-bundles are always locally $\RKK$-trivial, which
 means that for each $x\in X$ there exists a neighbourhood $U$ of $x$ such that 
 the restriction $A(U)$ of $A(X)$ to $U$ is $\RKK(U;\cdot,\cdot)$-equivalent to 
 $C_0(U\times \TT^n)$.  As usual, the global picture is much more difficult.
 Using the local $\RKK$-triviality we show in \cite{ENO} that to each non-commutative torus bundle 
 $A(X)$ we may associate a corresponding bundle of $K$-theory groups which comes 
 equipped with a canonical action of the fundamental group $\pi_1(X)$ of the base $X$.
Using this associated group bundle allows us to obtain at least a partial classification result
up to $\RKK$-equivalence (see \cite[Theorem 7.5]{ENO}).

In this paper we want  to extend the studies of \cite{ENO} from a more general perspective.
Indeed, we are interested in C*-algebra bundles which are non-commutative analogues of 
classical fibrations in topology which satisfy certain weak versions of 
the homotopy lifting property. Indeed, the important point 
implied by the  homotopy lifting property in classical topology is 
that for any fibration $q:Y\to X$ 
with this property, the space $Y$ looks, in a topological sense,  locally
like a product space
$U\times F$. The phrase ``in a topological sense'' means that any homotopy invariant
(co-)homology theory
cannot differentiate between $p^{-1}(U)$ and $U\times F$. 

Since it seems to be impossible to rephrase the homotopy lifting property 
in the non-commutative setting, we shall give a definition of this property in dependence of a
given (co-)homology theory on the category of C*-algebras.
For example,
a (section algebra of a) C*-algebra bundle $A(X)$ over $X$ is called 
a {\em $K$-fibration} if for any positive integer $p$, for any $p$-simplex $\Delta^p$
and for any continuous map $f:\Delta^p\to X$  the pull-back $f^*A(\Delta^p)$ of $A(X)$ 
via $f$ is $K$-theoretically trivial in the sense that the evaluation homomorphism
$$q_v:f^*A(\Delta^p)\to A_{f(v)}$$
induces an isomorphism of $K$-theory groups $K_i(f^*A(\Delta^p))\cong K_i(A_{f(v)})$
for all $v\in \Delta^p$. In a similar way we can define $\KK$-fibrations, $\RKK$-fibrations
or $h$-fibrations, when $(h_n)_{n\in \ZZ}$ (resp $(h^n)_{n\in \ZZ}$) is any
given (co-)homology theory on a suitable category of C*-algebras.
In case of $K$-theory, the strongest notion will be that of an $\RKK$-fibration (which implies 
that such bundles are automatically $\KK$- and $K$-fibrations) and we shall
indicate that there exist many  natural examples of such fibrations. 
For instance, the principle non-commutative torus bundles of \cite{ENO} are 
always $\RKK$-fibrations. 

The main result of this paper will be the proof of a non-commutative analogue 
of the Leray-Serre spectral sequence for general $h$-fibrations.
Indeed, if $(h_n)_{n\in \ZZ}$ (resp. $(h^n)_{n\in \ZZ}$) is any
given (co-)homology theory on a suitable category of C*-algebras,
and if $A(X)$ is an $h$-fibration over the geometric realisation of a simplicial complex $X$,
we show that we can associate to $A$ the group bundle $\mathcal H=\{h_q(A_x): x\in X\}$
which carries a canonical action of $\pi_1(X)$. The Leray Serre spectral sequence 
for $A(X)$ then converges to $h(A(X))$ and  has
(co-)homology groups $H^p(X, \mathcal H_q)$ as $E_2$-terms.

Thus, at least in principle we can use the spectral sequence 
for computation of the $K$-theory groups of any $K$-fibration 
$A(X)$. In particular this applies to the {principal} non-commutative $\TT^n$-bundles
as studied in \cite{ENO}. The spectral sequence also serves as an obstruction 
for $\RKK$-equivalence of two bundles $A(X)$ and $B(X)$---any such equivalence
induces an isomorphism between the respective spectral sequences. 
It is certainly an interesting question to what extend the converse might hold,
at least in case where $A(X)$ and $B(X)$ are $\RKK$-fibrations (or 
locally $\RKK$-trivial).
{In a final section we apply the spectral sequence to the study of the 
noncommutative torus bundles of \cite{ENO} and show that it gives 
the missing tool for deciding which noncommutative torus bundles are 
globally $\RKK$-trivial. We further give an explicit computation of the spectral 
sequences in the case of non-commutative $2$-torus bundles over $\TT^2$. 
The results show that there are noncommutative principle torus bundles 
with isomorphic spectral sequences for which we do not know at this point 
whether they are
$\RKK$-equivalent.

\section{Some preliminaries}

\subsection{Homology theories on $C^*$-algebras.}
Let $\Call$ denote 
the category of all $C^*$-algebras with $*$-homomorphisms as morphisms.
By a {\em good subcategory} of $\Call$ we shall understand any subcategory 
$\C$ of $\Call$ with $\CC\in \C$ and which is closed under taking ideals, quotients, extensions and 
suspension in the sense that if $A\in \Ob(\C)$, then $SA:=C_0(\RR,A)\in \Ob(\C)$.
Moreover, for simplicity, 
we shall assume that $\Mor_{\C}(A,B)=\Mor_{\Call}(A,B)$ for all $A,B\in \C$
and that if $A\cong B$ in $\Call$ and $A\in \C$, then $B\in\C$.
In many cases considered below, the above assumption on the morphisms 
could probably be weakened to the assumptions
given in \cite[21.1]{Black}, but we don't want to bother with this extra generality.
Standard examples of good subcategories of $\Call$ are given by the category
$\Csep$ of separable $C^*$-algebras, the category $\Cnuc$ of nuclear $C^*$-algebras
or the category $\Ccom$ of commutative $C^*$-algebras. Following \cite[21.1]{Black}
we now give the following

\begin{defn}\label{homtheory}
A {\em homology theory} on a good subcategory $\C$ of $\Call$ is a sequence $\{h_n\}_{n\in\ZZ}$
of covariant functors $h_n$ from $\C$ to the 
category $\Ab$ of abelian groups satisfying the following 
axioms:
\begin{itemize}
\item[(H)] If $f_0,f_1:A\to B$ are homotopic, then $f_{0,*}=f_{1,*}:h_n(A)\to h_n(B)$ for all $n\in \ZZ$.
\item[(LX)] If $0\to J\stackrel{i}{\to} A\stackrel{q}{\to} B\to 0$ is a short exact sequence in 
$\C$, then for each $n\in \ZZ$
there are connecting maps $\partial_n:h_n(B)\to h_{n-1}(J)$, natural with respect to morphisms 
of short exact sequences,  making exact the following
long sequence
$$\cdots \stackrel{\partial_{n+1}}{\longrightarrow} h_n(J)
\stackrel{i_*}{\longrightarrow} h_n(A)
\stackrel{q_*}{\longrightarrow} h_n(B)
\stackrel{\partial_{n}}{\longrightarrow} h_{n-1}(J)
\stackrel{i_*}{\longrightarrow} \cdots
$$
\end{itemize}
Similarly, we define a {\em cohomology theory} on a good subcategory as a sequence 
$\{h^n\}_{n\in\ZZ}$
of contravariant functors $h^n:\C\to\Ab$ which satisfy  the obvious reversed axioms
(e.g. see \cite[21.1]{Black}).
\end{defn}

\begin{remark}\label{rem-homtheory}
{\bf (1)} It follows from these axioms that all $h_n:\C\to\Ab$ are additive in  the sense
that 
$$h_n(A_1\oplus A_2)=h_n(A_1)\oplus h_n(A_2)$$
and that $h_n(A)=\{0\}$ if $A$ is 
a contractible $C^*$-algebra. Since $CA:=C\big((-\infty,\infty],A\big)$ is contractible, it follows from 
(LX) applied to the short exact sequence 
$$0\to SA\to CA\to A\to 0$$
that 
$h_{n+1}(A)= h_n(SA)$ (resp. $h^{n-1}(A)=h^n(SA)$) for all $A\in \C$.
\\
{\bf (2)} A covariant (resp. contravariant) functor $F:\Csep\to \Ab$ is called {\em stable} if 
$$i_p:A\to A\otimes \K;\;
i_p(a)=a\otimes p$$
induces an isomorphism $i_{p,*}:F(A)\stackrel{\cong}{\to}F(A\otimes \K)$
(resp. $i_{p}^*:F(A\otimes \K)\stackrel{\cong}{\to}F(A)$)
for every one-dimensional projection $p\in \K$. It is shown in \cite[Corollary 22.3.1]{Black}
(the result is originally due to Cuntz \cite{Cu})
that every stable (co-)homology theory $\{h_n\}$ (resp. $\{h^n\}$) on $\Csep$ 
satisfies Bott-periodicity $h_{n+2}(A)=h_n(S^2A)\cong h_n(A)$ (resp.
$h^{n+2}(A)\cong h^n(A)$). Hence, every stable (co-)homology theory on $\Csep$ 
is $\ZZ/2\ZZ$-graded and the long exact sequence (LX) then becomes a cyclic six-term 
exact sequence.
\\
{\bf (3)}  A homology theory 
$\{h_n\}$ (resp.$ \{h^n\}$) is called
{\em $\sigma$-additive} (resp. {\em $\sigma$-multiplicative}) if $h_n(A)=\oplus_{i\in I}h_n(A_i)$ 
(resp. $h_n(A)=\prod_{i\in I} h_n(A_i)$) whenever 
$A\in \C$ is a countable direct sum  of objects $A_i\in \C$, $i\in I$, 
and similar for cohomology theories.

\noindent
{\bf (4)} The main example 
of a homology theory on $\Call$ (or any good subcategory $\C$ of $\Call$)
is given by $K$-theory, and  $K$-homology serves as the main example for a cohomology theory
on $\Call$. Note that $K$-theory is $\sigma$-additive and $K$-homology is $\sigma$-multiplicative.
\end{remark}

Assume now that $\C$ is a good subcategory of $\Call$ and suppose that
$A(X)\in \C$ is a $C_0(X)$-algebra. 
In what follows we write $\Delta^p=<v_0,\ldots,v_p>$
for the standard $p$-simplex {with vertices $v_0,\ldots,v_p$}. It follows  from the properties of a good 
subcategory of $\Call$ that if $A\in \C$ and $f:\Delta^p\to X$ is any continuous map,
then $f^*A= \big(C(\Delta^p)\otimes A\big)/I_f$, with $I_f$ is a suitable ideal in 
$C(\Delta^p)\otimes A$, is again an object in $\C$.
In particular, all fibers $A_x$ for $x\in X$ are in $\C$.
The following definition is motivated by the notations and 
results presented in \cite[Chapter I]{D}:

\begin{defn}\label{defn-fibrations}
Suppose that $\C$ is a good subcategory of $\Call$ and that
$\{h_n\}$ is a homology theory on $\C$ (resp. $\{h^n\}$ is a cohomology 
theory on $\C$). Suppose further that $A=A(X)$ is a $C_0(X)$-algebra in $\C$.
Then
\begin{enumerate}
\item $A(X)$ is called an {\em $h$-fibration} 
if for all continuous maps $f:\Delta^p\to X$ and for every {point} $v\in \Delta^p$,
the quotient map $q_v:f^*A \to A_{f(v)}$ induces an isomorphism 
$q_{v,*}:h_n(f^*A)\to h_n(A_v)$ (resp. $q_{v}^*:h^n(A_{f(v)})\to h^n(f^*A)$).
\item If $\C= \Csep$, then 
$A(X)$ is  called a {\em $KK$-fibration}, if for all continuous maps 
$f:\Delta^p\to X$ and for every element $v\in \Delta^p$
the quotient map $q_v:f^*A \to A_{f(v)}$
is a $KK$-equivalence.
\item If $\C=\Csep$, then $A(X)$ is called an {\em $\RKK$-fibration}, if 
$f^*A$ is $\RKK(\Delta^p;\cdot,\cdot)$-equivalent to $C(\Delta^p,A_{f(v)})$ for any
continuous map $f:\Delta^p\to X$ and for any element $v$ of $\Delta^p$.
%\item if $\C=\Csep$, then $A(X)$ is called {\em locally $\RKK$-trivial}, if for every 
%$x\in X$ there exists a neighborhood $U$ of $X$ such that $A(U)$ is 
%$\RKK(U;\cdot,\cdot)$-equivalent
%to $C_0(U, A_x)$.
\end{enumerate}
\end{defn}

\begin{remark}\label{rem-fibration}
{\bf (1)} Any $\RKK$-fibration is a $\KK$-fibration. This follows from the fact 
that if $\frak x\in \RKK(\Delta^p; C(\Delta^p,A_{f(v)}), f^*A)$ is an $\RKK$-equivalence,
then we get the following commutative diagram in $\KK$:
$$
\begin{CD} C(\Delta^p,A_{f(v)})  @>\frak x>\cong> f^*A\\
@Vq_v V\cong V      @VVq_v V\\
A_{f(v)}  @>\cong> \frak x(v)> A_{f(v)},
\end{CD}
$$
where all arrows except of the {right} vertical one are known to be isomorphisms in $\KK$.
But then all arrows are $\KK$-equivalences. We shall formulate below a partial converse
of this easy observation, which follows from  a result of D\^ad\^arlat.
\\
{\bf (2)} It is a direct consequence of \cite[Corollary 22.3.1]{Black} that
if $A(X)$ is a $KK$-fibration, then  $A(X)$ is an
$h$-fibration for 
any stable (co-)homology theory $\{h_n\}$ (resp.$ \{h^n\}$) on $\Csep$.
\\
{\bf (3)} Every locally trivial C*-algebra bundle $A(X)$ is an $\RKK$-fibration.
This follows  from the fact that a pull-back of a locally trivial C*-algebra bundle 
is again locally trivial, and that any locally trivial bundle over a contractible space is trivial
(e.g., see \cite{Huse}).
\\
{\bf (4)} All non-commutative  {principal} $n$-tori as considered in 
\cite{ENO} are $\RKK$-fibrations. This follows  from 
\cite[Proposition 3.1]{ENO}.
\\
{\bf (5)} Being an $h$-fibration (resp. $KK$-fibration,
resp. $\RKK$-fibration)
is preserved by taking pull-backs inside $\C$. This follows from the fact that
if $A$ is a $C_0(X)$-algebra in $\C$
and $g:Y\to X$ is any continuous map
such that $g^*(A)\in \C$, and if $f:\Delta^p\to Y$ is any continuous map, then
$f^*(g^*(A))=(g\circ f)^*(A)$, and hence evaluation at any vertex induces isomorphisms 
in $h$-theory.
\\
{\bf (6)} Being a $KK$-fibration is preserved under taking maximal 
tensor products with arbitrary separable $C^*$-algebras 
and by minimal tensor products with separable  exact $C^*$-algebras.
This follows from the fact that taking maximal or minimal tensor products
of a $KK$-equivalence $\frak x\in KK_0(C, D)$ with a 
fixed $C^*$-algebra $B$ gives a $KK$-equivalence 
$$\frak x\otimes B\in KK_0(C\otimes_{(\max)}B,
D\otimes_{(\max)}B).$$
Similar statements hold for $\RKK$-fibrations.
\end{remark}

In what follows next we want to show that in many situations being 
a $\KK$-fibration is actually equivalent to being an $\RKK$-fibration.
Recall that a C*-algebra bundle (i.e., a 
$C_0(X)$-algebra) $A(X)$ is called a {\em continuous C*-algebra bundle} if for all 
$a\in A$ the map $x\mapsto \|a_x\|$ is a continuous function on $X$.
We need the following deep 
theorem of D\^ad\^arlat (see \cite[Theorem 1.1]{Da}). 

\begin{thm}\label{thm-Dadarlat}
Let $X$ be a compact metrizable finite dimensional space and let $A(X)$ and $B(X)$ be 
separable nuclear continuous C*-algebra bundles over $X$.
Suppose further 
that $\sigma\in \RKK(X; A(X),B(X))$ is such that $\sigma(x)\in \KK(A_x,B_x)$ is invertible for all $x\in X$.
Then $\sigma$ is invertible in  $\RKK(X; A(X), B(X))$.
\end{thm}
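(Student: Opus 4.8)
The plan is to establish the statement by a local-to-global induction on the covering dimension of $X$, exploiting the fact that $\RKK(X;\cdot,\cdot)$ behaves like a representable, sheaf-theoretic cohomology theory in the base variable. The conceptual heart is to promote the fiberwise data $\sigma(x)\in\KK(A_x,B_x)$ to a statement about the whole family of bifunctors $D\mapsto\RKK(X;D,A)$ and $D\mapsto\RKK(X;D,B)$, and then to deduce invertibility of $\sigma$ itself by a Yoneda-type argument inside the triangulated Kasparov category of separable $C_0(X)$-algebras (with morphism groups $\RKK(X;\cdot,\cdot)$).

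First I would recall that, since $A(X)$ and $B(X)$ are separable, the Kasparov product makes $\sigma$ a morphism $A\to B$ in this category, and that such a morphism is invertible as soon as the induced natural maps $\sigma_*\colon\RKK(X;D,A)\to\RKK(X;D,B)$ are isomorphisms for every separable $C_0(X)$-algebra $D$: taking $D=B$ produces a right inverse by chasing the identity, and the contravariant functors $\RKK(X;\cdot,A)$, $\RKK(X;\cdot,B)$ symmetrically produce a left inverse, whence a two-sided inverse. Thus the task reduces to proving that $\sigma_*$ is an isomorphism of groups, in each degree of the $\ZZ/2\ZZ$-graded theory, for every such $D$.

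Next I would prove this isomorphism statement by induction on $\dim X$, using two tools. First, a Mayer--Vietoris sequence in the base: for a closed decomposition $X=Y\cup Z$ the restriction homomorphisms fit into a six-term exact sequence relating $\RKK(X;D,-)$, $\RKK(Y;D,-)\oplus\RKK(Z;D,-)$ and $\RKK(Y\cap Z;D,-)$, obtained from excision in $\RKK(X)$ applied to the ideal $C_0(X\setminus Z)\subset C(X)$; one must check this sequence is natural with respect to $\sigma_*$, which is routine once the excision sequence is in place. Second, the base case: over a point $\RKK$ collapses to ordinary $\KK$, so the hypothesis that $\sigma(x)$ is a $\KK$-equivalence makes $\sigma_*$ a fiberwise isomorphism. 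Since $X$ is compact, metrizable and finite dimensional, I would choose for the inductive step a finite closed cover of order at most $\dim X+1$, filter $X$ by unions of the cover, and at each stage apply Mayer--Vietoris together with the five lemma: the lower-dimensional overlaps are controlled by the inductive hypothesis, while the individual cover members are, after refinement, small enough that $\RKK$ over them is governed by the fibers. Separability and nuclearity enter to guarantee the required exactness and continuity properties of $\RKK(X)$ and the good behaviour of Kasparov products, and metrizability ensures that finite covers of controlled order exist.

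The main obstacle I anticipate is the inductive patching itself. Fiberwise invertibility is not a linear condition, so even though the induced maps on the bifunctors glue via the five lemma, one must control how the isomorphism property extends across overlaps where the local models need not agree on the nose. This is exactly where finite dimensionality becomes indispensable: it bounds the order of the cover and hence the length of the induction, ensuring the process terminates, and where the continuous-field hypothesis is used to keep the fiberwise data varying coherently over the base. Verifying the naturality and exactness of the Mayer--Vietoris machinery in every degree is a secondary technical point that must nonetheless be carried out carefully before the five-lemma step can be run.
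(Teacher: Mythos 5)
There is a basic mismatch to be aware of: the paper does not prove this statement at all. It is quoted verbatim as a deep theorem of D\^ad\^arlat (\cite[Theorem 1.1]{Da}) and used as a black box, so the comparison is really between your argument and D\^ad\^arlat's, whose proof rests on classification technology: reduction to continuous fields of stable Kirchberg algebras (tensoring with $\mathcal{O}_\infty\otimes\K$ preserves the hypotheses), realization of $\KK$-elements between Kirchberg algebras by genuine $*$-homomorphisms, and an approximate intertwining argument over the base in which finite-dimensionality enters through delicate approximation, not through the combinatorics of a cover. Your proposal tries to replace this input by formal machinery, and that is where it breaks.

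Concretely, the Yoneda-style reduction is fine (in the additive category with morphism groups $\RKK(X;\cdot,\cdot)$, the class $\sigma$ is invertible iff $\sigma_*\colon\RKK(X;D,A)\to\RKK(X;D,B)$ is bijective for every separable $C_0(X)$-algebra $D$), and a natural Mayer--Vietoris sequence for closed covers of the base is available since nuclearity gives semi-split extensions. The genuine gap is the step you describe as the cover members being, after refinement, ``small enough that $\RKK$ over them is governed by the fibers.'' For continuous fields this is false: unlike locally trivial bundles, a continuous field restricted to an arbitrarily small closed set $V$ is in no useful sense close to a constant field, and there is no formal mechanism by which invertibility of $\sigma(x)\in\KK(A_x,B_x)$ for all $x\in V$ forces invertibility of $\res_V^X(\sigma)$ in $\RKK(V;A(V),B(V))$ --- that implication \emph{is} the theorem, now posed over $V$ instead of $X$. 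Relatedly, your induction does not terminate: the members of a closed cover generically have the same covering dimension as $X$ (only the overlaps drop dimension), so the inductive hypothesis never applies to the pieces you must control; already for $X=[0,1]$ covered by $[0,1/2]$ and $[1/2,1]$ you face two copies of the original problem. Five-lemma patching can only promote local $\RKK$-invertibility to global $\RKK$-invertibility; the fiberwise-to-local step is the actual content of D\^ad\^arlat's theorem, it uses nuclearity and finite-dimensionality in an essentially non-formal way (the statement fails over infinite-dimensional compact metrizable spaces), and it cannot be recovered by Mayer--Vietoris bookkeeping.
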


As a direct corollary we get the partial converse to the observation made in item (2) of 
Remark \ref{rem-fibration}:

\begin{cor}\label{converse}
Suppose that $A(X)$ is a separable nuclear continuous C*-algebra  bundle over some 
locally compact space $X$. Then  $A(X)$ is a $\KK$-fibration, if and only if it is an 
$\RKK$-fibration.
\end{cor}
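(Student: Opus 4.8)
The plan is to prove the nontrivial implication, namely that a separable nuclear continuous bundle which is a $\KK$-fibration is automatically an $\RKK$-fibration; the reverse implication has already been recorded in Remark \ref{rem-fibration}(1). So assume $A(X)$ is a $\KK$-fibration, fix a continuous map $f\colon\Delta^p\to X$ and a point $v\in\Delta^p$, and let us produce an $\RKK(\Delta^p;\cdot,\cdot)$-equivalence between $f^*A$ and the constant bundle $C(\Delta^p,A_{f(v)})$.

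The idea is to interpolate through a mapping-cylinder type bundle built from a contraction of the simplex. Using the convex structure of $\Delta^p$, let $H\colon\Delta^p\times[0,1]\to\Delta^p$, $H(t,s)=(1-s)t+sv$, be the straight-line contraction of $\Delta^p$ onto $v$, so that $H_0=\id$ and $H_1$ is the constant map at $v$. Put $F=f\circ H\colon\Delta^p\times[0,1]\to X$ and $D=F^*A$, and regard $D$ as a $C(\Delta^p)$-algebra via the first projection $p_1\colon\Delta^p\times[0,1]\to\Delta^p$. Since $f\circ H_0=f$ and $f\circ H_1$ is constant at $f(v)$, evaluation at $s=0$ and at $s=1$ give two $C(\Delta^p)$-linear $*$-homomorphisms
\[
e_0\colon D\longrightarrow f^*A,\qquad e_1\colon D\longrightarrow C(\Delta^p,A_{f(v)}),
\]
each of which represents a class in the appropriate $\RKK(\Delta^p;\cdot,\cdot)$-group.

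The core of the argument is to show that $e_0$ and $e_1$ are fibrewise $\KK$-equivalences and then to invoke D\^ad\^arlat. Viewing $D$ as a bundle over $\Delta^p$ via $p_1$, its fibre over $t$ is the restriction of $F^*A$ to $\{t\}\times[0,1]$, which by functoriality of pull-backs equals $g_t^*A$, where $g_t\colon\Delta^1\to X$ is the path $g_t(s)=f\big((1-s)t+sv\big)$. Under this identification $e_0$ and $e_1$ restrict over $t$ to the evaluation maps $q_0\colon g_t^*A\to A_{g_t(0)}=A_{f(t)}$ and $q_1\colon g_t^*A\to A_{g_t(1)}=A_{f(v)}$, and by the $\KK$-fibration hypothesis applied to $g_t$ and the two endpoints of $\Delta^1$ both are $\KK$-equivalences for every $t\in\Delta^p$. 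One checks that $D$, $f^*A$ and $C(\Delta^p,A_{f(v)})$ are all separable, nuclear and continuous as bundles over $\Delta^p$ (for $D$, continuity of $t\mapsto\|d_t\|$ follows from joint continuity of the norm on $\Delta^p\times[0,1]$ together with compactness of $[0,1]$). Since $\Delta^p$ is compact, metrizable and finite dimensional, Theorem \ref{thm-Dadarlat} upgrades this fibrewise invertibility to genuine invertibility, so that $e_0$ and $e_1$ are $\RKK(\Delta^p;\cdot,\cdot)$-equivalences. Composing the inverse of $[e_0]$ with $[e_1]$ then yields an equivalence $f^*A\simeq C(\Delta^p,A_{f(v)})$, and as $f$ and $v$ were arbitrary this shows that $A(X)$ is an $\RKK$-fibration.

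The main obstacle is precisely the construction that reduces the global statement to D\^ad\^arlat's theorem: one must realize the homotopy between $\id_{\Delta^p}$ and the constant map as a single $C(\Delta^p)$-algebra $D$ whose fibres are pull-backs over the interval $\Delta^1$, so that the defining hypothesis of a $\KK$-fibration (which only speaks about pull-backs over simplices) becomes applicable one fibre at a time. Once this device is in place the identification of the restricted evaluation maps with the endpoint evaluations $q_0,q_1$ is immediate, and Theorem \ref{thm-Dadarlat} carries the entire weight of the argument. The remaining verifications—separability, nuclearity, and especially continuity of the intermediate bundle $D$ over $\Delta^p$—are the routine technical points where the standing hypotheses on $A(X)$ enter.
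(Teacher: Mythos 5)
Your proof is correct, and it reaches the conclusion by a genuinely different construction from the paper's, although both arguments let D\^ad\^arlat's Theorem \ref{thm-Dadarlat} carry all the weight. The paper's proof needs no auxiliary bundle and only one application of that theorem: it takes the abstract $\KK$-inverse $q_v^{-1}\in\KK(A_{f(v)},f^*A)$, promotes it to a class in $\RKK(\Delta^p;C(\Delta^p,A_{f(v)}),f^*A)$ by applying Kasparov's operation $\sigma_{\Delta^p,C(\Delta^p)}$ followed by the map induced by the multiplication homomorphism $C(\Delta^p)\otimes f^*A\to f^*A$, and then observes that the evaluation of this class at any $w\in\Delta^p$ is $(q_v^{-1})\otimes q_w$, which is invertible because the $\KK$-fibration hypothesis applies to $f$ at both $v$ and $w$. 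You instead realize the contraction of the simplex as the bundle $D=(f\circ H)^*A$ over $\Delta^p$, work exclusively with honest $C(\Delta^p)$-linear $*$-homomorphisms $e_0,e_1$ rather than abstract $\KK$-classes, and apply Theorem \ref{thm-Dadarlat} twice before composing $[e_0]^{-1}\otimes[e_1]$. Your route costs the extra verification that $D$ is a separable nuclear continuous $C(\Delta^p)$-bundle---your argument for continuity is the right one: the fibre norm in the $C([0,1])$-algebra $D_t$ is the supremum of the fibre norms of $F^*A$ over $\{t\}\times[0,1]$, and the supremum over a compact factor of a jointly continuous function is continuous---and it uses two applications of D\^ad\^arlat where the paper uses one. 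What it buys is a formally sharper statement: your fibrewise invertibility check invokes the $\KK$-fibration hypothesis only for the straight-line paths $g_t(s)=f\bigl((1-s)t+sv\bigr)$, i.e.\ only for pull-backs over $1$-simplices, so your argument actually shows that for a separable nuclear continuous bundle, $\KK$-invertibility of the endpoint evaluations of $\gamma^*A$ for all paths $\gamma\colon[0,1]\to X$ already forces $A(X)$ to be an $\RKK$-fibration.
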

\begin{proof} Since every $\RKK$-fibration is a $\KK$-fibration by item (1) of 
Remark \ref{rem-fibration} we only have to show the converse. 
Write $\Delta:=\Delta^p$ and let $f:\Delta\to X$ be any continuous map.
Since $A$ is a $\KK$-fibration, there exists the inverse $q_v^{-1}\in \KK(A_{f(v)}, f^*A)$ of 
the evaluation map $q_v$. Consider the image of $q_v^{-1}$ under the composition of maps
$$
\begin{CD}
\KK(A_{f(v)}, f^*A) @>\sigma_{\Delta, C(\Delta)}>> 
 \RKK(\Delta; C(\Delta)\otimes A_{f(v)},C(\Delta)\otimes  f^*A)\\
 @>\mu_*>> 
\RKK(\Delta; C(\Delta, A_{f(v)}),f^*A),
\end{CD}
$$
where $\mu: C(\Delta)\otimes f^*A\to f^*A; \mu(g\otimes a)=g\cdot a$ is the multiplication 
homomorphism.  If we evaluate this class at a point $w\in \Delta$, we obtain the class
$(q_v^{-1})\otimes q_w\in \KK(A_{f(v)}, A_{f(w)})$, which is invertible since $A$
is a $\KK$-fibration.  Hence the result follows from D\^ad\^arlat's theorem.
\end{proof}

Another interesting problem is the relation between locally $\RKK$-triviality, which 
was discussed in \cite{ENO} in connection  with non-commutative torus bundles and 
the $\RKK$-fibrations considered here. Let us recall that a C*-algebra bundle 
$A(X)$ is called {\em locally $\RKK$-trivial}, if for every $x\in X$ there exists a neighbourhood 
$V$ of $x$ such that the restriction $A(V)$ of $A$ to $V$ is $\RKK(V;\cdot,\cdot)$-equivalent
to $C_0(V, A_x)$. We have seen in \cite{ENO} that all {principal} non-commutative torus bundles 
are locally $\RKK$-trivial.
The proof of the following proposition is then straightforward.

\begin{prop}\label{prop-loceucl}
Suppose that $X$ is  locally euclidean, i.e. every $x\in X$ has 
a neighbourhood $U$ which is homeomorphic to an open ball in some $\RR^n$.
Then, if   $A(X)$ is an $\RKK$-fibration it follows  that $A(X)$ is locally $\RKK$-trivial.
\end{prop}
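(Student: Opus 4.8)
The plan is to produce, for an arbitrary $x\in X$, an explicit neighbourhood $V$ together with an $\RKK(V;\cdot,\cdot)$-equivalence between $A(V)$ and $C_0(V,A_x)$, by transporting the fibration datum along a chart and then restricting. First I would invoke local euclideanness to fix a homeomorphism $\phi\colon U\to B$ of an open neighbourhood $U$ of $x$ onto an open ball $B\subset\RR^n$ with $\phi(x)$ the centre. Choosing a concentric closed ball $\overline{B}_r\subset B$ and recalling that any compact convex body with non-empty interior is homeomorphic to the standard simplex, I obtain a homeomorphism $g\colon\Delta^n\to\overline{V}$ onto the compact set $\overline{V}:=\phi^{-1}(\overline{B}_r)$ carrying the open simplex $\oDelta^n$ onto the open set $V:=\phi^{-1}(B_r)$ and a chosen interior point $v_0$ to $x$. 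Since $\overline{V}$ is compact it is closed in the Hausdorff space $X$, while $V$ is open in $X$ with $V\subset\overline{V}$.

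Next I would apply the $\RKK$-fibration hypothesis to the continuous map $f:=\iota\circ g\colon\Delta^n\to X$, where $\iota\colon\overline{V}\hookrightarrow X$ is the inclusion. By Definition \ref{defn-fibrations}(3), $f^*A$ is $\RKK(\Delta^n;\cdot,\cdot)$-equivalent to $C(\Delta^n,A_{f(v_0)})=C(\Delta^n,A_x)$. Because $f$ is a homeomorphism of $\Delta^n$ onto the closed subset $\overline{V}$, the pull-back $f^*A$ is canonically isomorphic, as a $C(\Delta^n)$-algebra, to the restriction $A(\overline{V})$ equipped with its $C(\overline{V})\cong C(\Delta^n)$-structure. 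Transporting the equivalence along $g$, using functoriality of $\RKK$ under homeomorphisms of the base, yields an $\RKK(\overline{V};\cdot,\cdot)$-equivalence between $A(\overline{V})$ and $C(\overline{V},A_x)$.

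Finally I would restrict this equivalence from the closed set $\overline{V}$ to the open subset $V$. By the standard functoriality of Kasparov's $\RKK$-theory under restriction to a locally closed subset of the base, restriction to $V$ produces an $\RKK(V;\cdot,\cdot)$-equivalence between $A(\overline{V})|_V=A(V)$ and $C(\overline{V},A_x)|_V=C_0(V,A_x)$. Since $x\in X$ was arbitrary, this is exactly local $\RKK$-triviality.

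Each step is individually routine, in keeping with the remark that the proof is ``straightforward''; the one point I would treat with care — and which I expect to be the main, if modest, obstacle — is the last one. One must know that an $\RKK(Y;\cdot,\cdot)$-equivalence restricts to an $\RKK(W;\cdot,\cdot)$-equivalence for a locally closed $W\subset Y$, here with $Y=\overline{V}$ and $W=V$ open. This is precisely where the passage from the compact simplex, on which the fibration hypothesis is stated, to the open neighbourhood required by local triviality actually occurs, and it rests on the compatibility of the restriction maps with Kasparov products.
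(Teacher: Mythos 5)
Your proof is correct and is precisely the ``straightforward'' argument the paper leaves to the reader: identify a closed-ball neighbourhood $\overline{V}$ of $x$ with $\Delta^n$ via a chart, apply the $\RKK$-fibration hypothesis to that embedding to get $A(\overline{V})\sim_{\RKK(\overline{V})}C(\overline{V},A_x)$, and restrict to the open interior $V$, using that pull-back of $\RKK$-classes along the inclusion $V\hookrightarrow\overline{V}$ is compatible with Kasparov products and units (the same functoriality from \cite[Proposition 2.2]{Kas2} that the paper itself invokes around Lemma \ref{lem-RKK}). No gap remains; your care about the restriction step is exactly where the only (modest) content lies.
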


A bit surprisingly, the converse of the above proposition seems to 
be much more complicated. We shall obtain it later as a corollary of another
remarkable theorem of D\^ad\^arlat (see \cite[Theorem 2.5]{Da}), which
states that every separable and nuclear continuous C*-algebra bundle
over some compact metrizable space $X$ is $\RKK(X;\cdot,\cdot)$-equivalent 
to  a continuous bundle of simple Kirchberg algebras, i.e., each fibre is a separable 
nuclear purely infinite C*-algebra. As a direct consequence we get

\begin{prop}\label{prop-Kirchberg}
Suppose that $A(X)$ is a separable nuclear continuous C*-algebra bundle 
over the compact metrizable finite dimensional space $X$ such that  $A(X)$  is locally $\RKK$-trivial.
Then $A(X)$ is $\RKK$-equivalent to a locally trivial bundle of stable Kirchberg algebras.
\end{prop}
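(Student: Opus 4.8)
The plan is to combine the two cited theorems of D\^ad\^arlat: first use \cite[Theorem 2.5]{Da} to replace $A(X)$ by a bundle with Kirchberg fibres, and then promote local $\RKK$-triviality to genuine local triviality by a classification argument. First I would apply \cite[Theorem 2.5]{Da} to obtain a separable nuclear continuous C*-algebra bundle $B(X)$, all of whose fibres are simple Kirchberg algebras, together with an $\RKK(X;\cdot,\cdot)$-equivalence $A(X)\sim B(X)$. Since $\RKK$ is stable, the stabilization map $B(X)\to B(X)\ts\K$ is an $\RKK(X)$-equivalence, so after replacing $B$ by $B\ts\K$ I may assume that each fibre $B_x$ is a \emph{stable} Kirchberg algebra. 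By transitivity of $\RKK(X)$-equivalence, together with its compatibility with restriction to open subsets (cf. Remark \ref{rem-fibration}(5)), the bundle $B(X)$ inherits the hypothesis of being locally $\RKK$-trivial from $A(X)$.

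It therefore suffices to show that a locally $\RKK$-trivial separable nuclear continuous bundle $B(X)$ with stable Kirchberg fibres is in fact locally trivial; then $B(X)$ itself is the desired locally trivial bundle of stable Kirchberg algebras that is $\RKK$-equivalent to $A(X)$. Local triviality is a local statement, so fix $x\in X$. Local $\RKK$-triviality provides an open neighbourhood of $x$ on which $B$ is $\RKK$-equivalent to the constant bundle with fibre $F:=B_x$; shrinking, I would choose a compact neighbourhood $K$ of $x$ contained in that open set, so that the restriction satisfies $B(K)\sim_{\RKK(K)} C(K,F)$. Here $K$ is again compact metrizable and finite dimensional, and both $B(K)$ and $C(K,F)$ are separable nuclear continuous bundles with stable Kirchberg fibres, since each fibre of $B$ over $K$ is $\KK$-equivalent to $F$ and hence isomorphic to $F$ by the Kirchberg--Phillips classification.

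The key step is then to upgrade the $\RKK(K)$-equivalence $B(K)\sim C(K,F)$ to a genuine $C(K)$-linear $*$-isomorphism. For this I would invoke D\^ad\^arlat's classification of separable nuclear continuous fields of stable Kirchberg algebras over a finite dimensional compact metrizable base, according to which two such fields are isomorphic as $C(K)$-algebras precisely when they are $\RKK(K;\cdot,\cdot)$-equivalent. This yields $B(K)\cong C(K,F)$, and restricting to the interior $U$ of $K$ gives an honest local trivialization $B(U)\cong C_0(U,F)$. Covering the compact space $X$ by finitely many such interiors produces an open cover trivializing $B(X)$, so $B(X)$ is locally trivial and the proposition follows.

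I expect this final step to be the main obstacle: Theorem \ref{thm-Dadarlat} only turns fibrewise $\KK$-invertibility into $\RKK(X)$-invertibility, whereas the serious input needed here is the \emph{isomorphism} theorem for Kirchberg fields, which converts an $\RKK(K)$-equivalence into a $C(K)$-linear $*$-isomorphism. The remaining technical point requiring care is the passage from the open neighbourhoods furnished by local $\RKK$-triviality to compact charts $K$ on which the classification theorem is available; this is handled by choosing compact neighbourhoods inside the given open ones and passing back to their interiors, after which compactness of $X$ delivers a finite trivializing cover.
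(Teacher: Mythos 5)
Your proposal is correct and takes essentially the same route as the paper's own proof: apply D\^ad\^arlat's theorem (\cite[Theorem 2.5]{Da}) to replace $A(X)$ up to $\RKK$-equivalence by a bundle of simple Kirchberg algebras, stabilize, use local $\RKK$-triviality to get compact neighbourhoods on which the bundle is $\RKK$-equivalent to a trivial one, and then invoke D\^ad\^arlat's isomorphism theorem (\cite[Theorem 2.7]{Da}) to upgrade these equivalences to genuine $C(V_x)$-linear isomorphisms, i.e.\ local trivializations. The only difference is that you make explicit some bookkeeping the paper leaves implicit (transitivity of $\RKK$-equivalence under the replacement, and the Kirchberg--Phillips identification of the fibres over a chart), and you correctly identify the isomorphism theorem for Kirchberg fields as the essential input beyond Theorem \ref{thm-Dadarlat}.
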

\begin{proof}
By D\^ad\^arlat's theorem, we may assume that $A(X)$ is a C*-algebra bundle
of simple Kirchberg algebras, and by stabilizing this bundle, we may assume
that all fibers are stable. If $A(X)$ is locally $\RKK$-trivial, we can find for
each $x\in X$ a compact neighbourhood $V_x$ such that $A(V_x)\sim_{\RKK} C(V_x, A_x)$.
It is then a consequence of \cite[Theorem 2.7]{Da} that this equivalence is 
actually realized by an isomorphism $A(V_x)\cong C(V_x,A_x)$ of C*-algebra bundles
over $V_x$.
\end{proof}

As a corollary we get

\begin{cor}\label{cor-trivial}
If $A(X)$ is a separable nuclear continuous field of C*-algebras over a locally compact space 
$X$. If $A(X)$ is locally $\RKK$-trivial, then $A(X)$ is an $\RKK$-fibration.
\end{cor}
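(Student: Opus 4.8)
The plan is to reduce the whole question to the simplex. The definition of an $\RKK$-fibration only tests continuous maps out of the compact simplices $\Delta:=\Delta^p$, so it suffices to fix one continuous map $f:\Delta\to X$ together with one point $v\in\Delta$ and to produce an $\RKK(\Delta;\cdot,\cdot)$-equivalence $f^*A\sim_{\RKK}C(\Delta,A_{f(v)})$. The reason passing to $\Delta$ helps is that, although $X$ is only locally compact, the base $\Delta$ is compact, metrizable and finite dimensional, so that Proposition \ref{prop-Kirchberg} becomes available. First I would check that $f^*A$ inherits the standing hypotheses: being a quotient of the separable nuclear algebra $C(\Delta)\otimes A$ it is separable and nuclear, and since pull-backs of continuous fields are again continuous --- for a simple tensor $g\otimes a$ the fibrewise norm at $y$ is $|g(y)|\,\|a_{f(y)}\|$, which is continuous in $y$ --- the bundle $f^*A$ is again a continuous C*-algebra bundle over $\Delta$.

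The key preliminary step is that local $\RKK$-triviality is inherited by pull-backs, in the same spirit as Remark \ref{rem-fibration}(5). Given $y\in\Delta$, set $x=f(y)$ and choose, using the hypothesis, a neighbourhood $V$ of $x$ with $A(V)\sim_{\RKK}C_0(V,A_x)$. With $W:=f^{-1}(V)$, the restriction of $f^*A$ to $W$ is the pull-back of $A(V)$ along $f|_W:W\to V$; as pulling back is functorial on $\RKK$-classes and carries invertible classes to invertible ones, this gives $(f^*A)(W)\sim_{\RKK}C_0(W,A_x)=C_0\big(W,(f^*A)_y\big)$. Since $y\in\Delta$ was arbitrary, $f^*A$ is locally $\RKK$-trivial over $\Delta$.

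Now Proposition \ref{prop-Kirchberg} applies to $f^*A$ over the compact metrizable finite dimensional space $\Delta$ and yields an equivalence $f^*A\sim_{\RKK}B(\Delta)$ with $B(\Delta)$ a \emph{locally} trivial bundle of stable Kirchberg algebras. Because $\Delta$ is contractible, every locally trivial bundle over it is globally trivial (Remark \ref{rem-fibration}(3) and \cite{Huse}), so $B(\Delta)\cong C(\Delta,F)$ with $F=B_v$. Evaluating the equivalence $f^*A\sim_{\RKK}C(\Delta,F)$ at the point $v$ --- exactly as in the diagram of Remark \ref{rem-fibration}(1) --- produces a $\KK$-equivalence $A_{f(v)}=(f^*A)_v\sim_{\KK}F$. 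Pushing this fibrewise equivalence into the base through the canonical map $\KK(A_{f(v)},F)\to\RKK(\Delta;C(\Delta,A_{f(v)}),C(\Delta,F))$ used in the proof of Corollary \ref{converse}, which sends invertibles to invertibles, gives $C(\Delta,A_{f(v)})\sim_{\RKK}C(\Delta,F)$. Concatenating the three equivalences yields $f^*A\sim_{\RKK}C(\Delta,A_{f(v)})$, which is precisely the defining condition of an $\RKK$-fibration.

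I expect the only genuine obstacle to lie in the transfer of hypotheses to $f^*A$: separability and nuclearity are automatic, but the continuity of the pulled-back field and the compatibility of the pull-back operation with the formation of constant bundles and with invertibility of $\RKK(\,\cdot\,;\cdot,\cdot)$-classes are the points that deserve care. Once these are in place, the corollary is a formal concatenation of the cited theorems of D\^ad\^arlat.
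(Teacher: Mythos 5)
Your proof is correct and takes essentially the same route as the paper's own argument: pull back along $f:\Delta^p\to X$, verify that $f^*A$ satisfies the hypotheses of Proposition \ref{prop-Kirchberg}, apply that proposition, and use contractibility of the simplex to turn the locally trivial Kirchberg bundle into a globally trivial one. The only difference is that you make explicit the details the paper leaves implicit --- the inheritance of continuity and of local $\RKK$-triviality under pull-back, and the final identification of the constant fibre with $A_{f(v)}$ via evaluation at $v$ --- which is exactly how the paper's terse proof is meant to be completed.
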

\begin{proof}
If $f:\Delta^p\to A(X)$ is any continuous map, the pull-back $f^*A(\Delta^p)$ satisfies
all requirements of the above proposition. Since $\Delta^p$ is contractible, every 
locally trivial bundle over $\Delta^p$ is trivial. Thus it follows from the proposition
that $f^*A(\Delta^p)$ is $\RKK$-equivalent  to a trivial bundle.
\end{proof}

\section{Examples}\label{sec-ex}
In this section  we want to show that $K$-fibrations and $\KK$-fibrations 
do appear quite often in nature. We already mentioned above that all locally
trivial C*-algebra bundles are $\RKK$-fibrations. 
Since being an $\RKK$-fibration is stable under $C_0(X)$-linear Morita equivalence,
this implies also that all continuous-trace C*-algebras with spectrum $X$ are 
$\RKK$-fibrations. Although these classes of C*-algebra bundles are certainly interesting,
it would probably  not give enough motivation for a general study of fibrations 
as we do in this paper. 

A class of interesting algebras which are, in general, far away from being 
locally trivial bundles of C*-algebras are the non-commutative principal 
torus bundles as studied in \cite{ENO}, and we already pointed out that 
all of them are $\RKK$-fibrations.  Recall that the principal non-commutative torus bundles 
are, by definition, crossed products of the form $C_0(X,\K)\rtimes \ZZ^n$, where 
$\ZZ^n$ acts fibre-wise on the trivial bundle $C_0(X,\K)$. We shall now see that,
with the help of the Baum-Connes conjecture, one can construct many other examples 
of $\RKK$-, $\KK$-, or $K$-fibrations via a similar crossed product 
construction.

Suppose that $A$ is a C*-algebra bundle 
and $\alpha:G\to \Aut(A)$ is 
any $C_0(X)$-linear action of the locally compact group $G$ on $A$, i.e.,
we have 
$$\alpha_s(f\cdot a)=f\cdot \alpha_s(a)\quad\text{for all $s\in G, f\in C_0(X)$, and $a\in A$}.$$
(We simply write $f\cdot a$ for $\Phi(f)a$ if $\Phi:C_0(X)\to ZM(A)$ is the $C_0(X)$-structure
map of the bundle).
Then $\alpha$ induces actions $\alpha^x:G\to \Aut(A_x)$ on the fibres $A_x$ 
via $\alpha^x_s(a+I_x)=\alpha_s(a)+I_x$. 
The full and reduced crossed products $A\rtimes_{(r)}G$ have canonical structures as 
$C_0(X)$-algebras via the composition of the given $C_0(X)$-structure $\Phi:C_0(X)\to ZM(A)$
of $A$ with the canonical embedding $M(A)\to M(A\rtimes_{(r)}G)$. 
For the full crossed product $A\rtimes G$, the fibre over $x\in X$ is then given by
the full crossed product $A_x\rtimes G$, which follows from the exactness 
of full crossed with respect to  
short exact sequences of $G$-algebras.
For the reduced crossed products the situation 
can be more complicated. However, if 
$G$ is exact in the sense of Kirchberg and Wassermann
(which is true for a large class of groups---see \cite{KW}), then 
the fibre of $A\rtimes_rG$ over $x\in X$ is $A_x\rtimes_rG$.

Note also that if $f:Y\to X$ is any continuous map, and if $\alpha:G\to \Aut(A)$ is a 
$C_0(X)$-linear action of $G$ on $A$, then we get a $C_0(Y)$-linear pull-back action 
$f^*(\alpha):G\to \Aut(f^*A)$ given on elementary tensors $g\otimes a\in f^*A=C_0(Y)\otimes_{C_0(X)}A$ by the formula
$$f^*(\alpha)_s(g\otimes a):= g\otimes \alpha_s(a).$$
It is then easily checked (e.g. see \cite{EW2}), that 
$f^*A\rtimes G\cong f^*(A\rtimes G)$ as $C_0(Y)$-algebras and
$f^*A\rtimes_{r}G\cong f^*(A\rtimes_rG)$ if $G$ is exact.

In what follows next, we want to give some conditions which imply that 
the $C_0(X)$-algebras $A\rtimes G$ and $A\rtimes_rG$ are either
$K_*$-fibrations, $\KK$-fibrations, or even $\RKK$-fibrations.
As the basic tool for this we shall use the Baum-Connes conjecture for $G$.
Recall that for any $G$-algebra $A$, the {\em topological $K$-theory} of $G$ with coefficient
$A$ is defined as 
$$K_*^{\top}(G;A)=\lim_Z\KK_*^G(C_0(Z), A),$$
where $Z$ runs through the $G$-compact subspaces of a universal proper $G$-space 
$\EG$. In \cite{BCH}, Baum, Connes and Higson constructed an assembly map
$$\mu_A: K_*^{\top}(G;A)\to K_*(A\rtimes_rG)$$
and they conjectured that this map should always be an isomorphism of groups.
Although this conjecture turned out to be false in general (e.g., see \cite{HLS}), 
the conjecture has been shown to be true for very  large classes of groups
including all amenable and, more general,  a-$T$-menable groups 
(see \cite{HK}).
In what follows, if $A$ is a fixed $G$-algebra, we shall say that {\em $G$ satisfies BC for $A$}
if the  map is an isomorphism for this special $G$-algebra $A$.

{A-$T$-menable groups satisfy in fact a stronger version of the Baum-Connes
conjecture, which can be stated as follows.} Recall that
a $G$-algebra $D$ is said to be a {\em proper} $G$-algebra, if $D$ is a $C_0(Z)$-algebra 
for some proper $G$-space $Z$ in such a way that the structure map $\Phi:C_0(Z)\to ZM(D)$
is $G$-equivariant. A group $G$ is said to have a {\em $\gamma$-element}
if there exists an element $\gamma_G\in \KK^G(\CC,\CC)$ and a proper $G$-algebra $D$ 
such that  $\gamma_G$ can be written as 
a Kasparov product $\beta\otimes_D\delta$ for some $\beta\in \KK^G(\CC, D)$ and 
$\delta\in \KK^G(D,\CC)$, and such that the restriction $\res_K^G(\gamma)=1
\in \KK^K(\CC,\CC)$ for all compact subgroups $K$ of $G$. 
If $G$ has a $\gamma$-element as above,
then, by work of Kasparov and Tu \cite{Kas2, Tu} (extended in  
\cite[Theorem 1.11]{CEO2} to the weaker notion of a $\gamma$-element used here)
the Baum-Connes assembly map is known to be split injective with 
image $\mu_A(K_*^{\top}(G;A))=\gamma_G\cdot K_*(A\rtimes_rG)$.
We say that $G$ satisfies the {\em strong Baum-Connes conjecture}
if  {$\gamma_G=1_G$ in $\KK_0^G(\CC,\CC)$}.
By the results of Higson and Kasparov in \cite{HK}, every a-$T$-menable group satisfies
the strong Baum-Connes conjecture. It is clear from the above discussion
that every group $G$ which satisfies the strong Baum-Connes conjecture 
satisfies BC for all $G$-algebras $A$.

\begin{prop}\label{prop-K}
Suppose that $A$ and $B$ are $G$-algebras and that $q\in \KK^G(A,B)$.
Let $j_G^{(r)}(q)\in \KK_0(A\rtimes_{(r)}G, B\rtimes_{(r)}G)$ denote the descent of $q$
for the full (resp. reduced) crossed products. For every compact subgroup $K$ of $G$ let
$$\varphi_K: K_*(A\rtimes K)\to K_*(B\rtimes K);\;\;\varphi_K(x)=x \otimes j_K(\res_K^G([q])).$$
Then the following are true:
\begin{enumerate}
\item If $G$ satisfies BC for $A$ and $B$ and if
$\varphi_K$ is an isomorphism for every compact subgroup 
$K$ of $G$, then $\cdot\otimes j_G^r(q):K_*(A\rtimes_rG)\to K_*(B\rtimes_rG)$ is an isomorphism.
\item If $G$ satisfies the strong Baum-Connes conjecture and if 
$\varphi_K$ is an isomorphism for every compact subgroup 
$K$ of $G$, then
$$\cdot\otimes j_G^{(r)}(q):K_*(A\rtimes_{(r)} G)\to K_*(B\rtimes_{(r)} G)$$ is an isomorphism for the full and reduced crossed products.
\item If $G$ satisfies the strong Baum-Connes conjecture 
and if $j_K(\res_K^G(q))$
is a $\KK$-equivalence between $A\rtimes K$ and $B\rtimes K$ for all compact subgroups
$K$ of $G$, then 
$j_G^{(r)}(q)$ is a $\KK$-equivalence between $A\rtimes_{(r)} G$ and $B\rtimes_{(r)} G$,
for the full and reduced crossed products.
\end{enumerate}
\end{prop}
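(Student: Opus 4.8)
The plan is to run all three statements through the Baum--Connes assembly map and exploit its naturality in the coefficient variable. For (i), the Kasparov product with $q$ induces a map $q_*\colon K_*^{\top}(G;A)\to K_*^{\top}(G;B)$ on topological $K$-theory, obtained by applying $\cdot\,\ts q$ to each $\KK_*^G(C_0(Z),A)$, and the assembly maps are natural, so that $\mu_B\circ q_*=(\cdot\,\ts j_G^r(q))\circ\mu_A$. Since $G$ satisfies BC for $A$ and $B$, both $\mu_A$ and $\mu_B$ are isomorphisms, and hence $\cdot\,\ts j_G^r(q)$ is an isomorphism if and only if $q_*$ is. Thus everything reduces to showing that $q_*$ is an isomorphism on $K^{\top}$, and the hypothesis on the $\varphi_K$ is precisely tailored to this.

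To see that $q_*$ is an isomorphism on $K_*^{\top}(G;\cdot)=\varinjlim_Z\KK_*^G(C_0(Z),\cdot)$, I would argue on each $G$-compact $Z\subseteq\EG$, which may be taken to be a finite proper $G$-CW complex, and pass to the limit over a cofinal family of such $Z$. Filtering $Z$ by its skeleta, each attaching of equivariant $n$-cells of orbit type $G/K$ yields a $G$-equivariant cofiber sequence, hence a ladder of six-term exact sequences in $\KK_*^G(C_0(\cdot),A)$ and $\KK_*^G(C_0(\cdot),B)$ intertwined by $q_*$. The cellular contribution $\KK_*^G(C_0(G/K\times\RR^n),\cdot)$ is identified, via suspension and the induction adjunction $\KK_*^G(C_0(G/K),\cdot)\cong\KK_*^K(\CC,\res_K^G(\cdot))$ followed by the Green--Julg isomorphism $\KK_*^K(\CC,\res_K^G(\cdot))\cong K_*(\cdot\rtimes K)$ (valid since $K$ is compact), with $K_{*-n}(\cdot\rtimes K)$. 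Under these identifications $q_*$ becomes precisely $\varphi_K$ up to the degree shift, which is an isomorphism by assumption; an induction on skeleta using the five lemma then gives that $q_*$ is an isomorphism on each $Z$, proving (i).

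Statement (ii) follows from (i) with little extra work. The strong Baum--Connes conjecture ($\ga_G=1$) implies BC for every $G$-algebra, so the reduced case is immediate from (i). For the full crossed product I would use that $\ga_G=1$ forces $G$ to be $K$-amenable, so that the regular representations $\la_A\colon A\rtimes G\to A\rtimes_rG$ and $\la_B\colon B\rtimes G\to B\rtimes_rG$ are $\KK$-equivalences; naturality of the descent gives $[\la_A]\ts j_G^r(q)=j_G(q)\ts[\la_B]$, and since $\cdot\,\ts[\la_A]$ and $\cdot\,\ts[\la_B]$ are isomorphisms on $K$-theory, the full map $\cdot\,\ts j_G(q)$ is an isomorphism because the reduced one is. For (iii) I would bootstrap (ii) by tensoring with auxiliary coefficients: equipping an arbitrary separable $C^*$-algebra $C$ with the trivial $G$-action one has $(A\ts C)\rtimes_{(r)}G\cong(A\rtimes_{(r)}G)\ts C$ and $j_G^{(r)}(q\ts 1_C)=j_G^{(r)}(q)\ts 1_C$, while $\res_K^G(q\ts 1_C)=\res_K^G(q)\ts 1_C$ descends to the $\KK$-equivalence $j_K(\res_K^G q)\ts 1_C$, so the corresponding $\varphi_K$ for $A\ts C,B\ts C$ is an isomorphism. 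Applying (ii) to these coefficients shows $\cdot\,\ts(j_G^{(r)}(q)\ts 1_C)$ is a $K$-theory isomorphism $K_*((A\rtimes_{(r)}G)\ts C)\to K_*((B\rtimes_{(r)}G)\ts C)$ for every separable $C$, and I would then invoke the standard fact that an element of $\KK$ between separable $C^*$-algebras inducing a $K$-theory isomorphism after tensoring with every separable $C^*$-algebra is a $\KK$-equivalence, to conclude that $j_G^{(r)}(q)$ is a $\KK$-equivalence.

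The main obstacle is the cell-by-cell analysis underlying (i): one must check carefully that, under the induction adjunction followed by Green--Julg, the map induced by $q$ on the orbit-type contribution $\KK_*^G(C_0(G/K),\cdot)$ really coincides with $\varphi_K=\cdot\,\ts j_K(\res_K^G q)$, and that these identifications are compatible with the connecting homomorphisms in the skeletal ladder so that the five lemma applies. A secondary point to pin down is the detection principle invoked in (iii); it is standard, but it is the one place where one should verify that no nuclearity or bootstrap hypothesis is secretly required.
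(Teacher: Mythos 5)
Your parts (i) and (ii) are correct and essentially reproduce the paper's argument. The paper handles the key step of (i) --- that $\cdot\otimes q$ is an isomorphism $K_*^{\top}(G;A)\to K_*^{\top}(G;B)$ once all $\varphi_K$ are isomorphisms --- by citing \cite[Proposition 1.6]{ELPW}, whereas you sketch a proof of that statement by induction over the cells of a proper $G$-CW model of a $G$-compact $Z\subseteq \EG$, using the induction adjunction and Green--Julg. That is the standard strategy behind the cited result; the caveat is that for non-discrete $G$ a $G$-compact proper $G$-space need not carry a finite $G$-CW structure, and one has to replace naive skeletal induction by the slice-theorem/going-down machinery of \cite{CEO2} --- this is exactly what makes the cited proposition nontrivial, so your sketch is acceptable in outline but hides real work. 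Part (ii) (reduced case from (i), full case via $K$-amenability and naturality of descent with respect to the regular representation) is precisely the paper's argument.

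Part (iii) contains a genuine gap. Your reduction is fine: tensoring with a separable $C$ carrying the trivial action and applying (ii) does show that $\cdot\otimes\bigl(j_G^{(r)}(q)\otimes 1_C\bigr)$ is an isomorphism $K_*\bigl((A\rtimes_{(r)}G)\otimes C\bigr)\to K_*\bigl((B\rtimes_{(r)}G)\otimes C\bigr)$ for every separable $C$. But the ``standard fact'' you then invoke --- that a class in $\KK(D,E)$ inducing $K$-theory isomorphisms after tensoring with every separable $C^*$-algebra must be a $\KK$-equivalence --- is not a standard fact, and it is not available here. The two genuine detection principles are: the Yoneda criterion, which requires $\cdot\otimes x:\KK(D,A\rtimes_{(r)}G)\to \KK(D,B\rtimes_{(r)}G)$ to be bijective for \emph{all} separable $D$ (in particular for $D=B\rtimes_{(r)}G$ itself); and the UCT argument, which produces such bijections from a $K$-theory isomorphism only for $D$ in the bootstrap class. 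Isomorphisms on $K_*(\cdot\otimes C)$, even for all separable $C$, control $\KK(D,\cdot)$ only for $D$ in the bootstrap class, and the crossed products $A\rtimes_{(r)}G$, $B\rtimes_{(r)}G$ have no reason to lie in it --- the proposition assumes no nuclearity, exactness, or bootstrap hypothesis on $A$, $B$, or $G$. So the answer to the question you flag at the end of your proposal is: yes, a bootstrap-type hypothesis \emph{is} secretly required for your final step, and it is missing. This is exactly the point where $K$-theoretic information, even with arbitrary coefficients, is weaker than $\KK$-theoretic information. The paper closes (iii) with a genuinely different tool that bypasses the issue: by the second part of \cite[Proposition 8.5]{MN} (Meyer--Nest localisation of categories), $\KK$-equivalence of $A\rtimes K$ and $B\rtimes K$ for all compact $K\subseteq G$ yields a $\KK$-equivalence of the derived crossed products $A\rtimes^{\mathbb L}G$ and $B\rtimes^{\mathbb L}G$, and the strong Baum--Connes hypothesis enters precisely to identify these, up to $\KK$-equivalence, with the full and reduced crossed products. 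Some such triangulated-category argument has to replace your detection step; it cannot be obtained by bootstrapping (ii).
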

\begin{proof} 
Since $G$ satisfies BC for $A$ and $B$ item (i) follows if we can show that 
taking Kasparov product with $q$ induces an isomorphism from $K_*^{\top}(G;A)$ 
to $K_*^{\top}(G;B)$. But since all $\varphi_K$ are isomorphisms, this follows from 
\cite[Proposition 1.6]{ELPW}.

The proof of (ii) is a  consequence of (i) and the fact that  the strong Baum-Connes conjecture implies 
the Baum-Connes conjecture for all $G$-algebras and it implies also that $G$ is $K$-amenable 
which implies that
 the regular representation $L: A\rtimes G\to A\rtimes_rG$ induces an isomorphism 
in $K$-theory \cite{Tumoy}.

Finally, the proof of (iii) follows from 
the second part of \cite[Proposition 8.5]{MN},
since under the assumption of the strong Baum-Connes conjecture, the derived
crossed products $A\rtimes^{\mathbb L}G$ and $B\rtimes^{\mathbb L}G$
of \cite[Proposition 8.5]{MN} are $\KK$-equivalent to the full and reduced 
crossed products $A\rtimes_{(r)}G$ and $B\rtimes_{(r)}G$, respectively.
\end{proof}

The above proposition now implies:

\begin{prop}\label{prop-BC-fibration}
Suppose that $A=A(X)$ is a separable C*-algebra bundle over $X$
and let $\alpha:G\to \Aut(A)$ be a $C_0(X)$-linear
action of the second countable locally compact group
$G$ on $A(X)$. Assume that for each compact subgroup $K$ of $G$ 
the $C_0(X)$-algebra $A(X)\rtimes K$ is a $K_*$-fibration.
Then
\begin{enumerate}
\item If $G$ is exact and
satisfies BC for $f^*A$ for all continuous $f:\Delta^p\to X$, $p=0,1,2,..$ (in particular, if 
$G$ satisfies BC for all $G$-algebras $B$), then the reduced crossed product
$A(X)\rtimes_rG$ is a $K_*$-fibration.
\item If $G$ satisfies the strong Baum-Connes conjecture, 
then the full crossed product $A(X)\rtimes G$ is  a $K_*$-fibration.
If, in addition, $G$ is exact, the same is true for the reduced crossed product 
$A(X)\rtimes_r G$.
\item If $G$ satisfies the strong Baum-Connes conjecture and if $A(X)\rtimes K$ is a 
$\KK$-fibration for every compact subgroup 
$K\subseteq G$, then $A(X)\rtimes G$ is a $\KK$-fibration. If, in addition, $G$ is exact,
then $A(X)\rtimes_rG$ is a $\KK$-fibration, too.
\end{enumerate}
\end{prop}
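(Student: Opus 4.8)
The plan is to reduce everything to Proposition \ref{prop-K} applied fibrewise over simplices. Fix a continuous map $f:\Delta^p\to X$ and a point $v\in\Delta^p$. Because $\alpha$ is $C_0(X)$-linear, the pull-back action $f^*(\alpha)$ makes $f^*A$ a $G$-algebra and descends to the fibre, so the evaluation map $q_v:f^*A\to A_{f(v)}$ is $G$-equivariant and defines a class $[q_v]\in\KK^G(f^*A,A_{f(v)})$. The goal is to show that the corresponding evaluation map of the crossed-product bundle induces an isomorphism on $K_*$ (resp.\ is a $\KK$-equivalence).

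The first step is to set up the identifications. Using $f^*A\rtimes_{(r)}G\cong f^*(A\rtimes_{(r)}G)$ as $C_0(\Delta^p)$-algebras (which holds for the full crossed product in general and for the reduced one when $G$ is exact), together with the fact that the fibre of $A\rtimes_{(r)}G$ over $f(v)$ is $A_{f(v)}\rtimes_{(r)}G$, the evaluation map of the pull-back bundle $f^*(A\rtimes_{(r)}G)$ at $v$ is identified with $q_v\rtimes_{(r)}G:f^*A\rtimes_{(r)}G\to A_{f(v)}\rtimes_{(r)}G$. On $K$-theory this map is precisely Kasparov product with the descent $j_G^{(r)}([q_v])$, so it suffices to show that $\cdot\otimes j_G^{(r)}([q_v])$ is an isomorphism (resp.\ that $j_G^{(r)}([q_v])$ is a $\KK$-equivalence). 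This is exactly the conclusion of Proposition \ref{prop-K} applied with the class $q=[q_v]$.

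The second, and crucial, step is to verify the hypothesis of Proposition \ref{prop-K}, namely that $\varphi_K$ is an isomorphism for every compact subgroup $K\subseteq G$. Here I use the same identifications with $K$ in place of $G$, where no exactness is needed since $K$ is compact: the descent $j_K(\res_K^G([q_v]))$ is represented by $q_v\rtimes K$, which is the evaluation map at $v$ of the pull-back $f^*(A\rtimes K)\cong f^*A\rtimes K$. By hypothesis $A\rtimes K$ is a $K_*$-fibration, so this evaluation induces an isomorphism on $K_*$, which is precisely the statement that $\varphi_K$ is an isomorphism. For item (iii) the stronger hypothesis that $A\rtimes K$ is a $\KK$-fibration gives that $q_v\rtimes K$ is a $\KK$-equivalence, which is exactly the hypothesis required by Proposition \ref{prop-K}(iii).

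With these two steps in place the three items follow by feeding the appropriate part of Proposition \ref{prop-K}: item (i) from \ref{prop-K}(i), noting that BC for $f^*A$ for all $f$ includes the case $p=0$ and hence supplies BC for the fibre $A_{f(v)}=c^*A$ along the constant map $c$; item (ii) from \ref{prop-K}(ii) for both the full and reduced crossed products under the strong Baum-Connes conjecture; and item (iii) from \ref{prop-K}(iii). The main obstacle I anticipate is purely a matter of naturality and bookkeeping: pinning down that the $K$-theory map induced by evaluating the crossed-product bundle genuinely coincides with Kasparov product with $j_G^{(r)}([q_v])$, and that $f^*(A\rtimes K)$ is canonically the pull-back (under the same $f$) of the $K_*$-fibration $A\rtimes K$ so that the $K_*$-fibration hypothesis applies directly. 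Once these compatibilities are fixed, the result is essentially immediate from Proposition \ref{prop-K}.
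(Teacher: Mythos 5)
Your proposal is correct and follows essentially the same route as the paper: identify $f^*(A\rtimes_{(r)}G)$ with $f^*A\rtimes_{(r)}G$ (using exactness for the reduced case), recognize the evaluation map as the descent of $[q_v]$, verify the compact-subgroup hypothesis $\varphi_K$ via the $K_*$- (resp.\ $\KK$-) fibration assumption on $A\rtimes K$, and then invoke the corresponding part of Proposition \ref{prop-K}. Your additional bookkeeping (e.g.\ that BC for $f^*A$ with $p=0$ supplies BC for the fibres) only makes explicit what the paper leaves implicit.
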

\begin{proof}
If $G$ is exact, then $A\rtimes_rG$ is a $C_0(X)$-algebra with fibres $A_x\rtimes_rG$
and we have $f^*A\rtimes_rG\cong f^*(A\rtimes_rG)$ for all continuous $f:\Delta^p\to X$.
By the assumption on the compact subgroups of $G$ we see that the quotient map
$q_v:f^*A\to A_{f(v)}$  {induces an isomorphism 
$$
K_*(f^*A\rtimes K)\stackrel{\cong}{\longrightarrow} K_*(A_{f(v)}\rtimes K)
$$ 
for all compact subgroups} 
$K$ of $G$. Item (i) then follows from part (i) of Proposition \ref{prop-K}.

Similarly, (ii) and (iii) follow from parts (ii) and (iii) of Proposition \ref{prop-K} together with the fact that
the $C_0(X)$-algebra $A(X)\rtimes G$ has fibres $A_x\rtimes G$.
If $G$ is exact, the same argument works for $A(X)\rtimes_r G$.
\end{proof}

\begin{remark}\label{rem-gamma}
{\bf (1)} If $G$ has no  compact subgroups 
(e.g., $G=\RR^n$, $G=\ZZ^n$ or $G=F_n$, the free group with $n$ generators), 
then the requirement that  $A(X)\rtimes K$ being  {$K_*$-fibration}  (resp.  $\KK$-fibration)
in the above Proposition reduces to the requirement that $A(X)$ is a  {$K_*$-fibration}
(resp. $\KK$-fibration).
Thus, if any of the groups $G=\RR^n, \ZZ^n, F_n$ acts fibrewise
on a  {$K_*$-fibration}
(resp. $\KK$-fibration) $A(X)$, then $A(X)\rtimes_{(r)}G$ is also a 
{$K_*$-fibration}
(resp. $\KK$-fibration), since all of these groups are exact and satisfy the strong 
Baum-Connes conjecture. Of course, there are many other examples of such groups.

{\bf (2)} It follows from  \cite[Proposition 3.1]{CEN} that if $G$ is exact and 
has a $\gamma$-element in the sense of Kasparov \cite{Kas2}, 
and if $A(X)$ is a  continuous C*-algebra bundle over $X$, then 
$G$ satisfies $BC$ for $f^*A$ for all $f:\Delta^p\to X$
if (and only if) $G$ satisfies BC for $A_x$ for every fibre $A_x$ of $A$.
(The only if direction follows from taking the constant map $f:\Delta^p\to X; f(v)=x$
and using the fact that $f^*A=C(\Delta^p, A_x)$ is $\KK^G$-equivalent to $A_x$).
\end{remark}

%
% 
%\begin{cor}\label{cor-trivial}
%Suppose that $G$ is a second countable locally compact group which has no compact subgroups 
%(e.g., $G=\RR^n$ or $G=\ZZ^n$ of $G=F_n$, the free group with $n$ generators)
%acting $C_0(X)$-linearly on the  
%locally trivial C*-algebra bundle $A(X)\in \Csep$ with fibre $B$. Then
%\begin{enumerate}
%\item If $G$ is exact and satisfies BC for  $C(\Delta^p,B)$ for all  $p\geq 0$ (with respect to
%the pull-back action $f^*(\alpha)$ on $f^*A\cong C(\Delta^p,B)$ for any continuous map
%$f:\Delta^p\to X$), then $A(X)\rtimes_rG$ is a $K_*$-fibration.
%\item If $G$ is exact and has a $\gamma$-element, and if $G$ satisfies BC for $B$ (with respect to 
%all actions induced by the evaluations $\ev_x:A(X)\to B$),
%then $A(X)\rtimes_rG$ is a $K_*$-fibration.
%\item If $G$ is a-$T$-menable, then $A\rtimes G$ is a $KK$-fibration. If, in addition,
%$G$ is exact, then full (and reduced) crossed product $A(X)\rtimes_{(r)}G$ is a $\KK$-fibration.
%\end{enumerate}
%\end{cor}

If we specialize to continuous-trace algebras $A$ with base $X$, we can 
improve the results. For notation, we let $\K=\K(l^2(\NN))$ denote 
the compact operators on the infinite dimensional separable Hilbert space.
Recall that if $A(X)$ is any separable continuous-trace algebra with spectrum $X$,
then $A(X)\otimes \K$ is a locally trivial C*-algebra bundle with fibre $\K$.
Using this we get:

\begin{cor}\label{cor-conttrace}
Suppose that $G$ is a second countable locally compact group acting fibre-wise
on a separable continuous-trace $C^*$-algebra $A(X)$ with spectrum $X$. 
Then
\begin{enumerate}
\item If $G$ satisfies the strong Baum-Connes conjecture (e.g., if $G$ is a-$T$-menable), 
then $A(X)\rtimes G$ is a $\KK$-fibration. If, in addition, $G$ is exact, the same holds for
$A\rtimes_rG$.
\item If $G$ is exact and satisfies BC for $C(\Delta^p,\K)$ for all fibre-wise actions
on $C(\Delta^p,\K)$, $p\geq 0$, then $A\rtimes_rG$ is a $K_*$-fibration.
\item 
If $G$ is exact and  has a $\gamma$-element, and $G$ satisfies BC for $\K$, for 
all actions of $G$ on $\K$, then $A\rtimes_rG$ is a $K_*$-fibration.
\end{enumerate}
\end{cor}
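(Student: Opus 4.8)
The plan is to deduce all three statements from Proposition \ref{prop-BC-fibration} together with the structural fact recalled just above the corollary: if $A(X)$ is a separable continuous-trace algebra with spectrum $X$, then $A(X)\otimes\K$ is a locally trivial C*-algebra bundle with fibre $\K$, hence (by Remark \ref{rem-fibration}(3)) an $\RKK$-fibration, and in particular a $\KK$-fibration. Since tensoring with $\K$ does not change any of the fibration hypotheses or conclusions in $K$-theoretic statements (the embedding $i_p\colon A\to A\otimes\K$ is a $\KK$-equivalence, and for stable homology theories the relevant maps are isomorphisms), I would first reduce to the case where $A(X)$ is itself a locally trivial bundle with fibre $\K$, i.e. replace $A$ by $A\otimes\K$ throughout. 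Because the $G$-action is fibre-wise and $C_0(X)$-linear, it passes to a fibre-wise action on $A(X)\otimes\K$, so no generality is lost.

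Once we are in the locally trivial situation, the key observation is that we must verify the hypothesis of Proposition \ref{prop-BC-fibration} concerning the compact subgroups $K\subseteq G$: namely that $(A(X)\otimes\K)\rtimes K$ is a $K_*$-fibration (resp. $\KK$-fibration). First I would note that for each compact $K$, the $C_0(X)$-algebra $A(X)\otimes\K$, being a locally trivial bundle, is an $\RKK$-fibration, and I would argue that $(A(X)\otimes\K)\rtimes K$ inherits this: on any simplex $\Delta^p$ the pulled-back bundle is trivial, so the crossed product is $C(\Delta^p)$-linearly isomorphic to the crossed product of a trivial bundle, whose evaluation maps induce $K$-theory isomorphisms. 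For part (i), exactness of $G$ plus the strong Baum-Connes hypothesis gives, via Proposition \ref{prop-BC-fibration}(iii), that the crossed products are $\KK$-fibrations. For parts (ii) and (iii) I would invoke the respective $K_*$-versions, checking in each case that the stated Baum-Connes hypothesis on the coefficient algebras ($C(\Delta^p,\K)$ in (ii), or $\K$ in (iii) via the $\gamma$-element reduction of Remark \ref{rem-gamma}(2)) supplies exactly the BC condition demanded by Proposition \ref{prop-BC-fibration}(i).

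The subtlety in (iii) is the passage from ``$G$ satisfies BC for $\K$ for all actions on $\K$'' to ``$G$ satisfies BC for $f^*(A\otimes\K)$ for all $f\colon\Delta^p\to X$.'' Here I would use Remark \ref{rem-gamma}(2): since $G$ is exact and has a $\gamma$-element, satisfying BC for the pulled-back bundles $f^*A$ is equivalent to satisfying BC for every fibre $A_x$; and in the locally trivial stabilised picture every fibre is isomorphic to $\K$ with some $G$-action, so the fibre-wise BC hypothesis for $\K$ is precisely what is needed. The compact-subgroup condition $\varphi_K$ becomes automatic because the fibres are stably $\K$ and the bundle is locally trivial, making $(A\otimes\K)\rtimes K$ a $K_*$-fibration by the argument of the previous paragraph.

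The main obstacle I anticipate is the bookkeeping around the compact subgroups and the interplay between local triviality and the crossed product. Specifically, one must be careful that the isomorphism $f^*A\rtimes_r G\cong f^*(A\rtimes_r G)$ (which requires exactness of $G$) and the triviality of $f^*A$ over the contractible $\Delta^p$ are compatible, so that the crossed product of the pulled-back bundle genuinely has the expected fibres $A_{f(v)}\rtimes K$ and the evaluation maps induce the desired $K$-theory isomorphisms uniformly. Verifying that $(A\otimes\K)\rtimes K$ is a $K_*$-fibration for every compact $K$---equivalently that evaluation at a vertex is a $K$-theory isomorphism after taking the crossed product by $K$ over a trivial bundle---is the one genuinely technical point, but it follows from homotopy invariance of $K$-theory applied to the contractible base together with the fact that crossed products by $K$ preserve the relevant $\KK$-equivalences, as already used in Remark \ref{rem-fibration}(6).
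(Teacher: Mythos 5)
Your overall route is the same as the paper's: stabilise, reduce to the pulled-back bundle over $\Delta^p$, verify the compact-subgroup hypothesis of Proposition \ref{prop-BC-fibration}, and then feed the stated Baum--Connes hypotheses (via Remark \ref{rem-gamma}) into parts (i)--(iii). But there is a genuine gap at exactly the step you label ``the one genuinely technical point'': showing that $(f^*A\otimes\K)\rtimes K$ is a $K_*$-fibration (equivalently, that evaluation at a vertex induces a $K$-theory isomorphism) for every compact subgroup $K$ of $G$. Triviality of the bundle $f^*A\otimes\K\cong C(\Delta^p,\K)$ does \emph{not} by itself control the crossed product: a fibre-wise ($C(\Delta^p)$-linear) action $\alpha$ of $K$ on $C(\Delta^p,\K)$ may vary from fibre to fibre, and ``homotopy invariance of $K$-theory applied to the contractible base'' does not apply, because the natural homotopy inverse to the evaluation map $q_v\colon C(\Delta^p,\K)\to\K$ --- the inclusion of $\K$ as constant functions --- is $K$-equivariant only when the action is diagonal. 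Likewise your appeal to Remark \ref{rem-fibration}(6) is off target: that remark concerns tensor products of $\KK$-equivalences, not crossed products; to push a $\KK$-equivalence through $\rtimes K$ you would need $q_v$ to be a $\KK^K$-equivalence, which is precisely what is at stake and does not follow from triviality of the underlying bundle.

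The paper closes this gap with a nontrivial external input that your argument lacks: by \cite[Proposition 1.5]{ELPW}, every fibre-wise action of a compact group $K$ on $C(\Delta^p,\K)=C(\Delta^p)\otimes\K$ is \emph{exterior equivalent} to a diagonal action $\id\otimes\alpha^v$, where $\alpha^v$ is the induced action on the fibre over $v$. Exterior equivalence of actions yields an isomorphism $C(\Delta^p,\K)\rtimes_\alpha K\cong C(\Delta^p,\K\rtimes_{\alpha^v}K)$ of bundles over $\Delta^p$; only then is the crossed product a trivial bundle, so that the evaluation maps induce $\KK$-equivalences and hence $K$-theory isomorphisms, which is what Proposition \ref{prop-BC-fibration} requires of the compact subgroups. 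Your proof needs this exterior-equivalence theorem (or an equivalent classification of fibre-wise compact group actions on trivial $\K$-bundles); as written, the assertion that the crossed product over $\Delta^p$ is ``the crossed product of a trivial bundle'' with the expected fibres and the desired evaluation isomorphisms is unjustified.
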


Notice that by the results of \cite{CEN} condition (iii) is satisfied for all almost connected 
groups and for all linear algebraic groups over $\QQ_p$. 

\begin{proof}[Proof of Corollary \ref{cor-conttrace}]
The corollary will follow from  Proposition \ref{prop-BC-fibration} and Remark \ref{rem-gamma}
if we can show that $(f^*A\rtimes K)\otimes \K$ is a trivial $C(\Delta^p)$-algebra for all continuous 
maps $f:\Delta^p\to X$, since this will imply that $A\rtimes K$ is a $\KK$-fibration.

For this we first note that
$(f^*A\rtimes K)\otimes \K\cong (f^*A\otimes\K)\rtimes K$, where $K$ acts trivially on $\K$.
Using this we may simply assume
that $f^*A=C(\Delta^p,\K)$. 
But it follows then from \cite[Proposition 1.5]{ELPW} that any fibre-wise action
of a compact group $K$ on $C(\Delta^p,\K)=C(\Delta^p)\otimes \K$ is
exterior equivalent  to a diagonal action $\id\otimes\alpha^v$, with $\alpha^v$ the action on the fibre 
{$\K= C(\Delta^p,\K)_v$}.
Thus $C(\Delta^p,\K)\rtimes K$ is isomorphic to $C(\Delta^p, \K\rtimes_{\alpha^v}K)$
as bundles over $\Delta^p$.
\end{proof}

So far we only considered $K_*$- or $\KK$-fibrations, but we promised at the beginning of this section 
that we will provide also examples of $\RKK$-fibrations. Indeed, combining the above results 
with Corollary \ref{converse} gives 

\begin{cor}\label{cor-RKKfib}
Suppose that $A(X)$ is a separable nuclear and  locally trivial C*-algebra bundle 
and let $G$ be a second countable amenable group acting fibre-wise on $A(X)$.
Then the following are true
\begin{enumerate}
\item If $G$ has no compact subgroups then $A(X)\rtimes G$ is an $\RKK$-fibration.
\item If $A(X)$ is a continuous trace algebra with spectrum $X$, then 
$A(X)\rtimes G$ is an $\RKK$-fibration.
\end{enumerate}
\end{cor}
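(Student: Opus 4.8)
The plan is to reduce both assertions to the statement that $A(X)\rtimes G$ is a $\KK$-fibration and then to apply Corollary \ref{converse} to upgrade this to an $\RKK$-fibration. Throughout I use that a second countable amenable group $G$ is exact and satisfies the strong Baum-Connes conjecture, the latter because amenable groups are a-$T$-menable and hence covered by the results of Higson and Kasparov recalled above. Note also that for amenable $G$ the full and reduced crossed products $A(X)\rtimes G$ and $A(X)\rtimes_r G$ coincide.

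Before applying Corollary \ref{converse} I would verify that $A(X)\rtimes G$ meets its hypotheses, namely that it is a separable nuclear continuous C*-algebra bundle. Separability is immediate from separability of $A$ and second countability of $G$, and nuclearity follows from nuclearity of $A$ together with amenability of $G$. The one genuinely technical point is continuity of the bundle: in case (i) local triviality makes $A$ a continuous $C_0(X)$-algebra, and in case (ii) a continuous-trace algebra is continuous as well, so in both cases continuity of $A(X)\rtimes_r G$ follows from continuity of $A$ and exactness of $G$ by the theorem of Kirchberg and Wassermann \cite{KW}. This continuity step is the one I expect to require the most care.

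For part (i) I would proceed as follows. Since $A(X)$ is locally trivial it is an $\RKK$-fibration by Remark \ref{rem-fibration}(3), hence in particular a $\KK$-fibration by Remark \ref{rem-fibration}(1). Because $G$ has no nontrivial compact subgroups, the only compact subgroup entering Proposition \ref{prop-BC-fibration}(iii) is the trivial one, for which $A(X)\rtimes\{e\}=A(X)$ is a $\KK$-fibration by the previous sentence; this is exactly the reduction recorded in Remark \ref{rem-gamma}(1). Proposition \ref{prop-BC-fibration}(iii) then shows that $A(X)\rtimes G$ is a $\KK$-fibration, and Corollary \ref{converse} upgrades this to an $\RKK$-fibration.

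For part (ii) the compact subgroups of $G$ can no longer be suppressed, but this case is already covered by Corollary \ref{cor-conttrace}(i): since $G$ satisfies the strong Baum-Connes conjecture, that corollary yields directly that $A(X)\rtimes G$ is a $\KK$-fibration, the compact-subgroup hypotheses being discharged there by the exterior-equivalence argument for fibre-wise actions on $C(\Delta^p,\K)$. A final application of Corollary \ref{converse} then completes the proof. Thus the fibration bookkeeping reduces to feeding the correct hypotheses into Proposition \ref{prop-BC-fibration} and Corollary \ref{cor-conttrace}, and the only real obstacle is securing the continuity hypothesis of Corollary \ref{converse}, which is precisely where exactness of the amenable group is used.
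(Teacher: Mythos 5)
Your proof is correct and follows essentially the same route as the paper's: amenability gives the strong Baum--Connes conjecture, nuclearity and exactness, so that case (i) follows from Remark \ref{rem-gamma}(1) (via Proposition \ref{prop-BC-fibration}(iii)), case (ii) from Corollary \ref{cor-conttrace}(i), and Corollary \ref{converse} upgrades the resulting $\KK$-fibration to an $\RKK$-fibration. The one divergence is the continuity of the bundle $A(X)\rtimes G$: the paper deduces it from Williams \cite{Will} (fibre-wise actions of amenable groups preserve continuity of bundles), whereas you invoke Kirchberg--Wassermann exactness; that statement (exact groups preserve continuity of bundles under reduced crossed products) is indeed a theorem of Kirchberg and Wassermann, but it is proved in their paper ``Exact groups and continuous bundles of $C^*$-algebras'' (Math. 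Ann. 315 (1999)), not in the permanence-properties paper cited as \cite{KW} here, so the citation would need adjusting.
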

\begin{proof}
Since $G$ is amenable, it satisfies the strong Baum-Connes conjecture by \cite{HK}.
Moreover, a crossed product of a continuous C*-algebra bundle by a fibre-wise
group action of an amenable group $G$ is again a continuous C*-algebra bundle
by \cite{Will}. Since nuclearity is also preserved under taking crossed products by amenable groups,
it follows that $A(X)\rtimes G$ is a nuclear separable and continuous C*-algebra bundle.
Thus it follows from Corollary \ref{converse} that $A(X)\rtimes G$ is an $\RKK$-fibration if and only if it is a $\KK$-fibration. Hence the result follows 
from Remark \ref{rem-gamma} and Corollary \ref{cor-conttrace}.
\end{proof}

Of course, as an example of the above corollary we get a new proof of the 
fact that the non-commutative  {principal} torus bundles of \cite{ENO} are 
$\RKK$-fibrations, since, by definition,  they are crossed products of the form 
$C_0(X,\K)\rtimes \ZZ^n$ by $C_0(X)$-linear actions of $\ZZ^n$ on $C_0(X,\K)$.

\section{The group bundle corresponding to an $h$-fibration}\label{sec-bundle}

Suppose that $X$ is a locally compact space. By an (abelian)
{\em group bundle} $\mathcal G:=\{G_x: x\in X\}$ 
we understand a {functor from the homotopy groupoid of $X$ to
the category of (abelian) groups. It is given by} a family of groups $G_x$, $x\in X$, together with 
group isomorphisms  {$c_{\gamma}:G_x\to G_y$} 
%\blue{(if we want to get an action of fundamental group, we have to take this convention)} 
for each continuous path
$\gamma: [0,1]\to X$ which starts at $x$ and ends at $y$, such that 
the following 
additional requirements are  satisfied:
\begin{enumerate}
\item If $\gamma$ and $\gamma'$ are homotopic paths from $x$ to $y$, then 
$c_\gamma=c_{\gamma'}$.
\item If $\gamma_1:[0,1]\to X$ and $\gamma_2:[0,1]\to X$ are paths from $x$ to $y$ 
and from $y$ to $z$, respectively, then 
{$$c_{\gamma_1\circ\gamma_2}=c_{\gamma_1}\circ c_{\gamma_2},$$}
where $\gamma_1\circ \gamma_2:[0,1]\to X$ is the usual composition of paths.
\end{enumerate}
It follows from the above requirements, that if $X$ is path connected, then all
groups $G_x$ are isomorphic and that we get a canonical action of the fundamental group 
$\pi_1(X)$ on each fibre $G_x$.

A morphism between two group bundles 
$\mathcal G=\{G_x: x\in X\}$ and $\mathcal G' =\{G'_x: x\in X\}$ is a family of group homomorphisms
$\phi_x:G_x\to G'_x$ which commutes with the maps $c_\gamma$. The trivial group bundle
is the bundle with every $G_x$ equal to a fixed group $G$ and all maps $c_{\gamma}$ 
being the identity. We then write $X\times G$ for this bundle.
If $X$ is path connected, then a given group bundle $\mathcal G$ on $X$ 
can be trivialized if and only if the action of $\pi_1(X)$ on the fibres $G_x$ are trivial.
In that case every path $\gamma$ from base points $x$ to $y$ induces the same morphism
{$c_{x,y}:G_x\to G_y$} and if we choose a fixed base point $x_0$,
the family of maps $\{c_{x,x_0}:x\in X\}$ is a group bundle 
isomorphism between the trivial group bundle $X\times G_{x_0}$  and the given 
bundle $\mathcal G=\{G_x:x\in X\}$.
It is now easy to check that every $h_*$-fibration $A(X)$ gives rise to a group bundle 
$\mathcal H_*:=\{h_*(A_x): x\in X\}$:

\begin{prop}\label{prop-bundle}
Suppose that $A(X)$ is an $h_*$-fibration. For any path $\gamma:[0,1]\to X$ 
with starting point $x$ and endpoint $y$ let
{$c_{\gamma}:h_*(A_x)\to h_*(A_y)$} denote the 
composition 
\begin{equation}\label{eq-cgamma}
\begin{CD}
h_*(A_y) @>\eps_{1,*}^{-1}>\cong> h_*(\gamma^*A) @>\eps_{0,*}>\cong> h_*(A_x)
\end{CD}
\end{equation}
Then $\mathcal H_*(A):=\{h_*(A_x): x\in X\}$ together with the above defined 
maps $c_{\gamma}$ is a group bundle over $X$.
A similar result holds for a cohomology theory $h^*$ if $A(X)$ is an $h^*$-fibration
(with arrows in (\ref{eq-cgamma}) reversed).
\end{prop}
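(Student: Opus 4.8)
The plan is to verify directly that the assignment $\gamma\mapsto c_\gamma$ satisfies the two functoriality axioms in the definition of a group bundle, the point being that every map whose source is $[0,1]=\Delta^1$, a subinterval of it, or the square $[0,1]^2\cong\Delta^2$, is a map out of a space homeomorphic to a simplex, so that the $h$-fibration hypothesis applies. First I would record two preliminary observations. (a) For any path $\gamma$, the maps $\eps_0,\eps_1\colon\gamma^*A\to A_{\gamma(0)},A_{\gamma(1)}$ are the evaluations at the endpoints $0,1$ of the pull-back along $\gamma\colon\Delta^1\to X$; hence by the fibration property $\eps_{0,*},\eps_{1,*}$ are isomorphisms and $c_\gamma=\eps_{0,*}\circ\eps_{1,*}^{-1}$ is a well-defined group isomorphism (it is a group homomorphism because each $h_*$ is a functor into $\Ab$). (b) The fibration property stated for $\Delta^p$ passes verbatim to any space $K$ homeomorphic to $\Delta^p$: if $\phi\colon\Delta^p\to K$ is a homeomorphism and $g\colon K\to X$ is continuous, then $(g\circ\phi)^*A\cong\phi^*(g^*A)\cong g^*A$ canonically, and evaluation of $g^*A$ at an arbitrary point of $K$ corresponds to a point evaluation of $(g\circ\phi)^*A$, which is an $h_*$-isomorphism. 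I shall use this for $K=[0,1]^2$.

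For the composition axiom (2), let $\gamma_1,\gamma_2$ be paths from $x$ to $y$ and from $y$ to $z$, and set $\beta=\gamma_1\circ\gamma_2$. Restricting $\beta^*A$ to $[0,\tfrac12]$ and to $[\tfrac12,1]$ and reparametrising gives canonical identifications with $\gamma_1^*A$ and $\gamma_2^*A$, under which the evaluations of $\beta^*A$ at $0,\tfrac12,1$ become the endpoint evaluations of $\gamma_1^*A$ and $\gamma_2^*A$ (with the fibre $A_y$ at $\tfrac12$ shared). Pushing these relations through $h_*$ and cancelling the isomorphisms $\eps_{1,*}^{\gamma_1}$ and $\eps_{1,*}^{\gamma_2}$ yields $\eps_{0,*}^\beta=c_{\gamma_1}\circ\eps_{1/2,*}^\beta$ and $\eps_{1/2,*}^\beta=c_{\gamma_2}\circ\eps_{1,*}^\beta$; composing gives $c_\beta=\eps_{0,*}^\beta\circ(\eps_{1,*}^\beta)^{-1}=c_{\gamma_1}\circ c_{\gamma_2}$. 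The same bookkeeping applied to the constant path shows $c_{\const_x}=\id$, so identities are preserved; this also follows formally from (1) and (2).

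The heart of the matter, and the step I expect to be the main obstacle, is the homotopy axiom (1). Given a homotopy $H\colon[0,1]^2\to X$ rel endpoints from $\gamma$ to $\gamma'$, so that the left and right edges are constant at $x$ and $y$, I would regard $H^*A$ as a bundle over the square and use observation (b) to conclude that the evaluation $e_P$ at each corner $P$ induces an isomorphism in $h_*$. Restricting $H^*A$ to the bottom edge recovers $\gamma^*A$ and expresses the corner evaluations as $\eps_0^\gamma,\eps_1^\gamma$ composed with the restriction map; cancelling $\eps_{1,*}^\gamma$ as before gives $c_\gamma=e_{(0,0),*}\circ(e_{(1,0),*})^{-1}$, and likewise $c_{\gamma'}=e_{(0,1),*}\circ(e_{(1,1),*})^{-1}$ from the top edge. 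It then remains to identify the two pairs of corner evaluations. For this I would restrict $H^*A$ to the left edge, where $H$ is constant equal to $x$, so that this restriction is $C([0,1],A_x)$; since the constant inclusion $A_x\hookrightarrow C([0,1],A_x)$ is a homotopy equivalence, both endpoint evaluations induce the same map in $h_*$ by the homotopy axiom (H), whence $e_{(0,0),*}=e_{(0,1),*}$, and symmetrically $e_{(1,0),*}=e_{(1,1),*}$ via the right edge. Combining the displayed formulas then gives $c_\gamma=c_{\gamma'}$. The subtlety to be careful about throughout is that the fibration hypothesis is only assumed for maps out of honest simplices, so the reduction in (b) from the square to $\Delta^2$ — together with the compatibility of restriction, reparametrisation and pull-back — is exactly what makes the two-dimensional homotopy argument legitimate.
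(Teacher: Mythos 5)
Your proof is correct and follows essentially the same route as the paper's: the paper likewise verifies homotopy invariance by pulling back along the homotopy $\Gamma:[0,1]^2\to X$ and observing that both $c_{\gamma}$ and $c_{\gamma'}$ coincide with the composition $\eps_{(0,0),*}\circ\eps_{(1,1),*}^{-1}$ of corner evaluations, with the constant left and right edges supplying the needed identifications. You merely spell out two points the paper leaves implicit --- the transfer of the fibration hypothesis from $\Delta^2$ to the homeomorphic square, and the use of axiom (H) on the constant edges --- which is exactly the right bookkeeping.
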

\begin{proof}
It is clear that constant paths induce the identity maps
and that  {$c_{\gamma\circ \gamma'}=c_{\gamma}\circ c_{\gamma'}$ , where $\gamma\circ \gamma'$}
denotes composition of paths.
Moreover, if $\Gamma:[0,1]\times [0,1]\to X$ is a homotopy between
the paths $\gamma_0$ and $\gamma_1$ with equal starting and endpoints, then 
$c_{\gamma_0}$ and $c_{\gamma_1}$ both coincide with the 
composition $\eps_{(0,0),*}\circ \eps_{(1,1),*}^{-1}$, where $\eps_{(0,0)}$ and $\eps_{(1,1)}$
denote evaluations of $\Gamma^*A$ at the respective corners of $[0,1]^2$.
Hence we see that $c_\gamma$ only depends on the homotopy class of $\gamma$.
\end{proof}

\begin{defn}\label{defn-group-bundle}
Suppose that $h$ is a (co)homology theory on a good category $\C$ of $C^*$-algebras
and let $A(X)$ be an $h$-fibration. Then $\mathcal H_*:=\{h_*(A_x): x\in X\}$ (resp.
$\mathcal H^*(A):=\{h^*(A_x):x\in X\}$ if $h$ is a cohomolgy theory) together with 
the maps $c_{\gamma}:h_*(A_y)\to h_*(A_x)$  is called  the {\em $h_*$-group bundle} associated to 
$A(X)$. 
\end{defn}

\begin{remark}\label{rem-KK}
If $A(X)$ is a $\KK$-fibration, then it is in particular a $K_*$- and a $K^*$-fibration, where 
$K_*$ and  $K^*$ denote ordinary $K$-theory and $K$-homology. We shall denote the resulting 
group bundles by $\mathcal K_*(A)$ and $\mathcal K^*(A)$, respectively.
\end{remark}

\section{The Leray-Serre spectral sequence}\label{sec-lerayserre}

In this section we want to proof an analogue of the classical Leray-Serre spectral sequence for
topological Serre-fibrations. From the last remark of the previous section we know 
that if $A(X)$ is a $h$-fibration for a (co-)homology theory $h$, then we get the group bundle 
$\H_*(A)$ over $X$. It is  well-known in topology  that one can use such bundles
as coefficients for singular or simplicial (co-)homology on $X$. It is our aim to show
that every $h$-fibration over a finite dimensional simplicial complex $X$ admits a 
spectral sequence with $E_2$-terms isomorphic to the (co-)homology of $X$ with coefficient 
in $\H_*$ (resp. $\H^*$).

Assume that $X$ is a locally compact CW-complex and that $A$ is any $C_0(X)$-algebra.
For $p\geq 0$ let $X_p$ denote the $p$-skeleton of $X$ and we
set $X_p=\emptyset$ for a negative integer $p<0$. We will always assume that 
$X$ is {\bf finite dimensional} so that there exists a smallest integer $d$ (the dimension of $X$)
such that $X_p=X$ for all $p\geq d$.
For all $p$ we write $A_p:=A|_{X_p}$ and $A_{p,p-1}:=A|_{X_p\setminus X_{p-1}}$,
where we use $A|_\emptyset:=\{0\}$.
We then obtain short exact sequences 
$$0\to A_{p,p-1}\to A_p\to A_{p-1}\to 0.$$
If $h_*$ is any homology theory on a good subcategory $\C$ of $\Call$ such that all
algebras $A_p$ and $A_{p,p-1}$ are in $\C$, naturality of the  long exact sequences
\begin{equation}\label{eq-long}
\cdots \stackrel{\partial_{n+1}}{\longrightarrow} h_n(J)
\stackrel{i_*}{\longrightarrow} h_n(A)
\stackrel{q_*}{\longrightarrow} h_n(B)
\stackrel{\partial_{n}}{\longrightarrow} h_{n-1}(J)
\stackrel{i_*}{\longrightarrow} \cdots
\end{equation}
gives the 
following 
commutative diagram:
{\tiny $$
\begin{CD}
@. @Vq_*VV   @.  @Vq_*VV  @.     \\
@>>> h_{{q+1}}(A_{p+1}) @>\partial>> h_q(A_{p+2,p+1}) @>\iota_*>> h_q(A_{p+2}) @>\partial >> h_{{q-1}}(A_{p+3,p+2})  @>\iota_*>> \\
@. @Vq_*VV   @.  @Vq_*VV  @.      \\
@>>> h_{{q+1}}(A_{p}) @>\partial>> h_q(A_{p+1,p}) @>\iota_*>> h_q(A_{p+1}) @>\partial >> h_{{q-1}}(A_{p+2,p+1})  @>\iota_*>> \\
@. @Vq_*VV   @.  @Vq_*VV  @.      \\
@>>> h_{{q+1}}(A_{p-1}) @>\partial>> h_q(A_{p,p-1}) @>\iota_*>> h_q(A_{p}) @>\partial >> h_{{q-1}}(A_{p+1,p})  @>\iota_*>>\\
@. @Vq_*VV   @.  @Vq_*VV  @.     
\end{CD}
$$}
Here the vertical arrows are induced by the quotient maps $q:A_p\to A_{p-1}$,
the maps $\iota^*: h_q(A_{p,p-1}) \to h_q(A_p) $ are induced by the inclusions
$\iota: A_{p,p-1}\to A_p$ and the maps $\partial: h_{{q+1}}(A_{p-1})\to  h_q(A_{p,p-1})$
denote the boundary maps in the long exact sequence (\ref{eq-long}).
Hence, the upper staircase of this diagram
forms the sequence (\ref{eq-long}).
Now writing $H^{p,q}:= h_q(A_p)$, {$E_1^{p,q}:=h_{q}(A_{p,p-1})$},
$\mathcal H:=\oplus_{p,q} H^{p,q}$ and $\mathcal E:=\oplus E_1^{p,q}$ 
we obtain an exact couple
$$
\xymatrix{
  \mathcal H\ar@{->}[rr]^{q^*}  & & \mathcal H \\
  & \ar@{->}[ul]^{\iota^*}  \mathcal E   \ar@{<-}[ur]_{\partial} & 
  }
$$
from which we obtain by the general procedure (which, for example,  is explained in 
\cite{McC}) a spectral sequence $\{E_r^{p,q}, d_r\}$ with $E_{\infty}$-terms
$E_{\infty}^{p,q}=F_p^q/F_{p+1}^q$
 with {$F_p^q:=\ker\big(h_{q}(A)\to h_q(A_p)\big)$.}
Since $F_{p}^q=h_q(A)$ for $p<0$ and $F_p^q=\{0\}$ for $p\geq d$, the dimension of $X$,
it follows that the spectral sequence converges to $h_q(A)$.
This means that we obtain a filtration
$$\{0\}=F_{d}^q\subseteq F_{d-1}^q\subseteq\cdots\subseteq F_{-1}^q=h_{q}(A)$$
of subgroups $F_p^q$ of $h_q(A)$ such that the sub-quotients can be computed (at least in principle)
by our  spectral sequence.

Similarly, if we start with a cohomology theory $h^*$ on $\C$, we consider the diagram
{\tiny $$
\begin{CD}
@. @Vq^*VV   @.  @Vq^*VV  @.      \\
@<<< h^{q-1}(A_{p-1}) @<\partial<< h^q(A_{p,p-1}) @<\iota^*<< h^q(A_p) @<\partial <<  h^{q+1}(A_{p+1,p})  @<\iota^*<< \\
@. @Vq^*VV   @.  @Vq^*VV  @.       \\
@<<< h^{q-1}(A_{p}) @<\partial<< h^q(A_{p+1,p}) @<\iota^*<< h^q(A_{p+1}) @<\partial <<  h^{q+1}(A_{p+2,p+1})  @<\iota^*<< \\
@. @Vq^*VV   @.  @Vq^*VV  @.       \\
@<<< h^{q-1}(A_{p+1}) @<\partial<< h^q(A_{p+2,p+3}) @<\iota^*<< h^q(A_{p+2}) @<\partial <<  h^{q+1}(A_{p+31,p+2})  @<\iota^*<<\\
@. @Vq^*VV   @.  @Vq^*VV  @.     
\end{CD}
$$}
which provides a spectral sequence $\{E^r_{p,q}, d^r\}$ with $E^{\infty}$-terms
{$E^{\infty}_{p,q}:=F^p_q/F^{p-1}_q$
where $F^p_q:=\im\big(h^{q}(A_p)\to h^{q}(A)\big)$.} Again, since $X$ is finite dimensional,
the spectral sequence converges to $h^q(A)$. 
Hence, at this stage we arrive at

\begin{prop}\label{prop-exact1}
Suppose that $X$ is a finite dimensional CW-complex and let $A(X)$ be a C*-algebra
bundle over $X$.
Suppose that $\C$ is  a good subcategory of $\Call$ so that $A_p:=A(X_p)\in \C$ 
for every $p$-skeleton of $X$. Then, if $h_*$ is a homology theory (resp. $h^*$ is a cohomology 
theory) on $X$ there exists a spectral sequence $\{E_r^{p,q}, d_r\}$ (resp. $\{E^r_{p,q}, d^r\}$)
which converges to $h_*(A)$ (resp. $h^*(A)$) as described above.
\end{prop}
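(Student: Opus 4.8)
The plan is to observe that the preceding discussion has already assembled every piece of data needed, so that the proof reduces to checking two formal points: that these data genuinely constitute a $\ZZ^2$-graded exact couple, and that the resulting spectral sequence converges. First I would record that all the relevant algebras lie in $\C$. By hypothesis each $A_p=A(X_p)$ is in $\C$, and since $A_{p,p-1}=A|_{X_p\setminus X_{p-1}}$ is an ideal of $A_p$ with quotient $A_{p-1}$, closure of a good subcategory under ideals and quotients guarantees that the short exact sequences $0\to A_{p,p-1}\to A_p\to A_{p-1}\to 0$, and hence the long exact sequences (LX) attached to them, all live in $\C$. In particular $A=A_d\in\C$, so that the target groups $h_q(A)$ make sense. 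This is precisely what renders the staircase diagram meaningful.

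Next I would verify that the triple $(\mathcal H,\mathcal E)$ together with the maps $q_*$, $\iota_*$ and $\partial$ is an exact couple in the sense required by the standard machinery (as in \cite{McC}). Exactness at each of the three vertices of the couple is literally the exactness of the long exact sequence attached to $0\to A_{p,p-1}\to A_p\to A_{p-1}\to 0$, read off the staircase as $\ker\partial=\im q_*$, $\ker\iota_*=\im\partial$ and $\ker q_*=\im\iota_*$. Tracking bidegrees, $q_*$ has bidegree $(-1,0)$, $\iota_*$ has bidegree $(0,0)$ and $\partial$ has bidegree $(+1,-1)$, so the derived differential $d_1=\partial\circ\iota_*$ has bidegree $(+1,-1)$ and the $r$-th differential $d_r$ has bidegree $(r,-1)$. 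Applying the general derived-couple procedure then produces the spectral sequence $\{E_r^{p,q},d_r\}$, and the standard identification of the limit of an exact couple gives $E_\infty^{p,q}=F_p^q/F_{p+1}^q$ with $F_p^q=\ker\big(h_q(A)\to h_q(A_p)\big)$.

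The one point that genuinely uses the hypotheses is convergence, and here finite-dimensionality does all the work. Since $X_p=\emptyset$ for $p<0$ and $X_p=X$ for $p\ge d$, we have $A_{p,p-1}=\{0\}$ outside the range $0\le p\le d$, whence $E_1^{p,q}=h_q(A_{p,p-1})=0$ there. Consequently, for any fixed bidegree $(p,q)$, the differential out of $E_r^{p,q}$ (which lands in $E_r^{p+r,q-1}$) and the differential into it (coming from $E_r^{p-r,q+1}$) both vanish once $r>d$, so $E_r^{p,q}$ stabilizes to $E_\infty^{p,q}$. Moreover the filtration of $h_q(A)$ is finite: $A_p=0$ gives $F_p^q=h_q(A)$ for $p<0$, while $A_p=A$ gives $F_p^q=0$ for $p\ge d$. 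Thus $\{F_p^q\}$ is an exhaustive, bounded filtration of $h_q(A)$ whose successive quotients are the $E_\infty^{p,q}$, which is exactly the asserted convergence to $h_q(A)$.

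Finally, the cohomological statement is handled by the evident dualisation: the same short exact sequences produce the second staircase of the reversed long exact sequences (LX), giving an exact couple whose derived spectral sequence $\{E^r_{p,q},d^r\}$ converges, by the identical finite-dimensionality argument, to $h^q(A)$ along the filtration $F^p_q=\im\big(h^q(A_p)\to h^q(A)\big)$. I do not expect a serious obstacle here: the content of this proposition is purely the packaging of the long exact sequences into an exact couple, and the only step requiring care is the convergence bookkeeping, which finite-dimensionality reduces to the two boundary observations $F_p^q=h_q(A)$ for $p<0$ and $F_p^q=0$ for $p\ge d$. The substantive work, namely identifying the $E_2$-page with the (co)homology of $X$ with coefficients in the group bundle $\H_*$, lies beyond this existence-and-convergence statement.
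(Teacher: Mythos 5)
Your proposal is correct and follows exactly the paper's route: the paper's ``proof'' is precisely the discussion preceding the proposition, which packages the skeleton filtration's long exact sequences into the exact couple $(\mathcal H,\mathcal E, q_*,\partial,\iota_*)$, invokes the standard derived-couple machinery of \cite{McC}, and deduces convergence from $F_p^q=h_q(A)$ for $p<0$ and $F_p^q=\{0\}$ for $p\geq d$. Your additional bookkeeping (membership of $A_{p,p-1}$ in $\C$ via closure under ideals, the bidegree computation, and the stabilization of $E_r^{p,q}$ for $r>d$) only makes explicit what the paper leaves implicit.
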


\begin{remark}\label{rem-E2}
Let us denote by $\{U^p_i: i\in I_p\}$ the open $p$-cells of $X$. We then have 
$$A_{p,p-1}=\oplus_{i\in I_p}A(U^p_i).$$
If $X$ is a finite simplicial complex this sum is finite and it follows from additivity of $h_*$
(resp. $h^*$) that 
{$$E_1^{p,q}=\oplus_{i\in I_p}h_{q}(A(U^p_i))\quad\quad\text{(resp.}\; 
E^1_{p,q}=\oplus_{i\in I_p}h^q(A(U^p_i))\;\text{)}.$$}

Of course, if $h_*$ (resp. $h^*$) is $\sigma$-additive or $\sigma$-multiplicative
we get similar 
infinite direct sum or product decompositions in case where $X$ is a $\sigma$-finite (i.e., $X$ has countably many cells). In any case we shall assume that $X$ is locally finite.
The $d_1$-differential is then determined by the maps
$$d_{1,q}^{(p,i),(p+1,j)}:h_q(A(U^p_i))\to {h_{q-1}}(A(U^{p+1}_j))$$
given by the composition 
$$
\begin{CD}
h_q(A(U^p_i)) @>I_q^{p,i}>> h_q(A_{p,p-1})
@>i_*>> h_{q}(A_p)\\
@>\partial >> h_{{q-1}}(A_{p+1,p})
@>Q_{{q-1}}^{p+1,j}>> h_{{q-1}}(A(U^{p+1}_j))
\end{CD}
$$
where $I^{p,i}:A(U^p_i)\to A_{p,p-1}$ denotes the inclusion and $Q^{p,i}:A_{p,p-1}\to A(U^p_i)$
denotes the quotient map. Similarly, for a cohomology theory $h^*$ we get maps
$$d^{1,q}_{(p,i),(p-1,j)}:h^q(A(U^p_i))\to h^{{q+1}}(A(U^{p-1}_j))$$
which are given by the 
compositions 
$$
\begin{CD}
h^q(A(U^p_i))@>Q^q_{p,i}>> h^q(A_{p,p-1}) @>\partial>> h^{{q+1}}(A_{p-1}) \\
@>i^*>>
 h^{{{q+1}}}(A_{p-1,p-2}) @>I^{{{q+1}}}_{p-1,j}>> h^{{{q+1}}}(A(U^{p-1}_j)).
 \end{CD}
$$
with similar meanings for $Q_{p,i}$ and $I_{p,i}$. It follows then that $E_2^{p,q}$ (resp. $E^2_{p,q}$)
is the cohomology  (resp. homology) of the complex build out of the 
above given data.
\end{remark}

We now want to study the groups $h_*(A(U^p_i))$ and the maps $d_{1,q}^{(p,i),(p+1,j)}$ 
more closely  in case where $A(X)$ is an $h_*$-fibration  and $X$ is a (finite) simplicial complex.
In particular, we want to give a better computation of the $E_2$-terms.
We shall restrict to the case of a homology theory 
on a good category
$\C$ of C*-algebras throughout, noting that similar 
arguments work for a cohomology theory as well. 

We start with introducing some notation:
As before, we let 
$$\Delta^p:=<v_0,\ldots, v_p>$$ denote the oriented closed $n$-simplex
with vertices $v_0,\ldots, v_p$, 
we let $\iDelta^p$ denote
its interior and we  let $\bDelta^p$ denote its boundary. 
If $0\leq k\leq p$ we shall consider $\Delta^k=<v_0,\ldots, v_k>$ as a subset of $\Delta^p$ 
and for $i\in\{0,\ldots, n\}$ we 
write  $\Delta^{p-1}_i:=<v_0,\ldots, v_{i-1},v_{i+1},\ldots v_p>$ for the oriented $i$th face of $\Delta^p$.
If $X$ is our given simplicial complex we write $\{\Delta^p(j):j\in I_p\}$ for the set of closed 
$p$-simplexes in $X$, and we let
$$\sigma^p(j):\Delta^p\to \Delta^p(j)\subseteq X$$
denote an explicit affine homeomorphism between the standard simplex $\Delta^p$ and $\Delta^p(j)$.

To study the differential {$d:E^1_{p,q}\to E^2_{p+1,q-1}$} in the above remark, we first need 
to study the simple case where $X=\Delta^p$ itself.  Recall that if $A=A(X)$ is an 
$h_*$-fibration with $X$ simply connected, then for each $x,y\in X$ there are 
unique isomorphisms $\Phi_{x,y}:h_*(A_x)\to h_*(A_y)$, which, for any chosen 
path $\gamma:[0,1]\to X$ with $\gamma(0)=y, \gamma(1)=x$, satisfy the equations
$$\Phi_{x,y}\circ \ev_{1,*}=\ev_{0,*}:h_*(\gamma^*A[0,1])\to h_*(A_y).$$

\begin{lem}\label{lem-path}
Suppose that $A=A(\Delta^P)$ is an $h_*$-fibration (resp. $h^*$-fibration) with $p\geq 1$.
Then for every $x\in \Delta^p$ evaluation at $x$ induces an isomorphism
$$\ev_{x,*}:h_*(A(\Delta^p))\to h_*(A_x).$$
Moreover, if $y$ is any other point in $\Delta^p$, then $\ev_{y,*}=\Phi_{x,y}\circ \ev_{x,*}$
(and similar statements for $h^*$-fibrations).
\end{lem}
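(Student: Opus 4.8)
The plan is to read the first assertion off the definition directly, and then to realise the transport isomorphism $\Phi_{x,y}$ explicitly as a pullback over the unit interval, reducing the second assertion to a composition of evaluation maps. First I would prove that $\ev_{x,*}$ is an isomorphism by applying Definition~\ref{defn-fibrations}(1) to the $C(\Delta^p)$-algebra $A=A(\Delta^p)$ with the identity map $f=\id\colon\Delta^p\to\Delta^p$: then $f^*A=A$, the quotient map $q_x$ is exactly $\ev_x\colon A\to A_x$, and the fibration hypothesis says precisely that $\ev_{x,*}\colon h_*(A(\Delta^p))\to h_*(A_x)$ is an isomorphism for every $x\in\Delta^p$ (with arrows reversed in the $h^*$-case). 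Should one prefer to read the fibration axiom as a statement about vertices only, the conclusion for an arbitrary point $x$ still follows from the path construction below, by factoring $\ev_{x,*}$ through $h_*(\gamma^*A)$.

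For the second assertion I would fix the straight-line path $\gamma\colon[0,1]\to\Delta^p$, $\gamma(t)=(1-t)y+tx$, which is legitimate since $\Delta^p$ is convex and which satisfies $\gamma(0)=y$, $\gamma(1)=x$. As $\Delta^p$ is contractible, hence simply connected, $\Phi_{x,y}$ is the isomorphism described just before the lemma, characterised by $\Phi_{x,y}\circ\ev_{1,*}=\ev_{0,*}$ on $h_*(\gamma^*A)$; here $\ev_{0,*}$ and $\ev_{1,*}$ are isomorphisms by the fibration property of $A$ applied to the map $\gamma\colon\Delta^1\to\Delta^p$. The bridge between $A$ and its pullback is the canonical homomorphism $\rho\colon A\to\gamma^*A$, $a\mapsto 1\otimes a$, and the key identity I would verify—routinely, on elementary tensors using $\ev_t(g\otimes a)=g(t)\,q_{\gamma(t)}(a)$—is the factorisation
\[
\ev_t\circ\rho=\ev_{\gamma(t)}\colon A\longrightarrow A_{\gamma(t)},\qquad t\in[0,1].
\]

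Passing to homology and setting $t=0,1$ gives $\ev_{0,*}\circ\rho_*=\ev_{y,*}$ and $\ev_{1,*}\circ\rho_*=\ev_{x,*}$, whence
\[
\Phi_{x,y}\circ\ev_{x,*}=\Phi_{x,y}\circ\ev_{1,*}\circ\rho_*=\ev_{0,*}\circ\rho_*=\ev_{y,*},
\]
which is the asserted identity; the cohomological version follows by reversing every arrow. I do not expect a genuine obstacle: the only steps needing care are matching the orientation convention (the direction of $\gamma$ against the defining relation for $\Phi_{x,y}$) and checking $\ev_t\circ\rho=\ev_{\gamma(t)}$ at the level of $C^*$-algebra morphisms. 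The hypothesis $p\ge 1$ is used only to ensure the existence of a genuinely distinct point $y$ and a nonconstant path, the case $p=0$ being vacuous.
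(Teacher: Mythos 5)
Your proposal is correct and takes essentially the same route as the paper: the first claim is read off Definition \ref{defn-fibrations} applied to $f=\id:\Delta^p\to\Delta^p$, and the second follows by pushing $A(\Delta^p)$ into $\gamma^*A$ along a path from $y$ to $x$ and using the defining relation $\Phi_{x,y}\circ \ev_{1,*}=\ev_{0,*}$. Your explicitly constructed map $\rho(a)=1\otimes a$ is precisely the homomorphism $\Phi_\gamma$ that the paper imports from \cite[Lemma 1.3]{ENO}, so the two arguments coincide.
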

\begin{proof} The first statement holds by definition of an $h_*$-fibration.
So we only have to check that $\ev_{y,*}=\Phi_{x,y}\circ \ev_{x,*}$
for any pair $x,y\in \Delta^p$.
Let $\gamma:[0,1]\to \Delta^p$ denote any path connecting 
$y=\gamma(0)$ with $x=\gamma(1)$. Since $\gamma$ is a proper map, 
\cite[Lemma 1.3]{ENO}
provides a  $*$-homomorphism $\Phi_{\gamma}:A(\Delta^p)\to \gamma^*A([0,1])$ and 
it is clear from the construction of $\Phi_{\gamma}$ that $\ev_x=\ev_1\circ \Phi_{\gamma}$
and $\ev_y=\ev_0\circ \Phi_{\gamma}$. The result now follows from the definition of $\Phi_{x,y}$.
\end{proof}

\begin{lem}\label{lem-contact}
Let $1\leq p$ and suppose that the $C(\Delta^p)$-algebra $A$ is an $h_*$-fibration.
Let $W\subseteq \Delta^p$ be any  set which is obtained from $\Delta^p$  by
removing a union of $k$ faces of dimension $p-1$ from $\Delta^p$
with $1\leq k\leq p$. Then $h_*(A(W))=0$.
\end{lem}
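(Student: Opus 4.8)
The plan is to induct on the number $k$ of removed faces, building the argument out of the short exact sequences that arise from restricting $A$ to nested locally closed subsets of $\Delta^p$. Throughout I would use that $W$ is open in $\Delta^p$ (it is the complement of a union of closed faces), so that $A(W)$ sits as an ideal of $A(\Delta^p)$, and that the restriction of an $h_*$-fibration to any subsimplex is again an $h_*$-fibration by Remark \ref{rem-fibration}(5), since it is the pull-back along the inclusion. I also use repeatedly that all the restricted algebras occurring below lie in $\C$; this follows from the good-subcategory axioms, as they are iterated ideals and quotients of $A$.

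For the base case $k=1$ I would remove a single $(p-1)$-face $F=\Delta^{p-1}_i$, producing the extension
$$0\to A(W)\to A(\Delta^p)\stackrel{q}{\to}A(F)\to 0,$$
where $q$ is restriction to $F$. The crucial point is that $q_*$ is an isomorphism on $h_*$: choosing any $x\in F$, evaluation at $x$ factors as $\ev_{x}=\ev_x^F\circ q$, and by Lemma \ref{lem-path} both $\ev_{x,*}$ on $A(\Delta^p)$ and $\ev_{x,*}^F$ on the $h_*$-fibration $A(F)$ over $F\cong\Delta^{p-1}$ are isomorphisms (for $p=1$ the face is a point and this is trivial). Hence $q_*$ is an isomorphism, and feeding this into the long exact sequence of the extension forces $h_*(A(W))=0$.

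For the inductive step ($2\le k\le p$) I would peel off one face. Writing $F_1,\dots,F_k$ for the removed $(p-1)$-faces, set $W'=\Delta^p\setminus(F_1\cup\cdots\cup F_{k-1})$, so that $W=W'\setminus F_k$ and $W'\setminus W=W'\cap F_k=F_k\setminus(F_1\cup\cdots\cup F_{k-1})$. Since $W$ is open in $W'$, this yields
$$0\to A(W)\to A(W')\to A(W'\cap F_k)\to 0.$$
Here $W'$ is $\Delta^p$ with $k-1$ faces removed, so $h_*(A(W'))=0$ by the inductive hypothesis; and $W'\cap F_k$ is the $(p-1)$-simplex $F_k$ with its $k-1$ distinct $(p-2)$-faces $F_k\cap F_j$ removed, so $h_*(A(W'\cap F_k))=0$ by the inductive hypothesis applied to the $h_*$-fibration $A|_{F_k}$ over $F_k\cong\Delta^{p-1}$ (valid since $1\le k-1\le p-1$). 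The long exact sequence then squeezes $h_*(A(W))$ between two vanishing groups, giving $h_*(A(W))=0$ and completing the induction; the cohomology case is identical with arrows reversed.

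The only genuinely delicate points are bookkeeping: one must check that the inductive hypothesis is always invoked in the admissible range $1\le k-1\le p-1$, which holds precisely because $2\le k\le p$, and that the $k-1$ faces removed from $F_k$ really are distinct $(p-2)$-faces, which they are, being cut out by the distinct conditions on the barycentric coordinates. I expect the main obstacle to be the base case, and specifically the verification that restriction to a face induces an isomorphism on $h_*$ via Lemma \ref{lem-path}; once that is in hand, everything else is a formal consequence of the long exact sequences.
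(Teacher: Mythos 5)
Your proof is correct and follows essentially the same route as the paper's: the $k=1$ case via the two-out-of-three argument that restriction to a face induces an isomorphism on $h_*$ (both compositions with evaluation at a common point being isomorphisms), followed by peeling off one face at a time and sandwiching $h_*(A(W))$ between two vanishing groups in the long exact sequence, with the second vanishing group coming from the inductive hypothesis applied to the fibration restricted to the face $F_k\cong\Delta^{p-1}$. The only cosmetic difference is organizational: you run a single induction on $k$ with the statement quantified over all $p$, while the paper phrases it as a simultaneous induction on $p$ and $k$.
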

\begin{proof} 
The proof is by induction on $p$ and $k$. If $p=1$ then $k=1$ and $W$
is homeomorphic to $[0,1)$. Since evaluation $A([0,1])\to A_1$ induces an isomorphism of 
$h_*$-groups, it follows from the long exact sequence corresponding to
$0\to A([0,1))\to A([0,1])\to A_1\to 0$ that $h_*(A(W))=0$.

Suppose now that $p>1$.
If $\Delta^{p-1}_i$ is any closed face of $\Delta^p$, then it follows from 
the properties of $h_*$-fibrations that the quotient map $q_i:A(\Delta^p)\to A(\Delta^{p-1}_i)$
induces an isomorphism $h_n(A(\Delta^p))\stackrel{q_{i,*}}{\longrightarrow} h_n(A(\Delta^{p-1}_i))$,
since composition with evaluation at any vertex $v$ of $\Delta^{p-1}_i$ induces 
isomorphisms $h_n(A(\Delta^p))\cong h_n(A(\Delta^{p-1}_i))\cong h_n(A_v)$. Hence, if $W=\Delta^p\smallsetminus \Delta^{p-1}_i$,
the long exact sequence of $h_*$-groups corresponding to the short exact sequence 
$0\to A(W)\to A(\Delta^p)\to A(\Delta^{p-1}_i)\to 0$ implies that  $h_*(A(W))=0$.

Suppose now that $k>1$ and let $F:= \Delta^p\smallsetminus W$.
Then we can write $F$ as a union  $F'\cup\Delta^{p-1}_i$,
where $F'$ is a union of $k-1$ faces. By the induction assumption we know that
$h_*(A(W'))=0$ for $W':=\Delta^p\smallsetminus F'$. Moreover, $W'\smallsetminus W$ 
is equal to $W'':=\Delta^{p-1}_i\setminus F''$, where $F''$ is some
union of $l$ $p-2$-dimensional faces with $1\leq l\leq p-1$. Hence, by induction 
assumption, we have $h_*(A(W''))=0$ and then all terms in the exact sequence
$h_q(A(W'))\to h_q(A(W))\to h_q(A(W''))$ must be zero.
\end{proof}

\begin{defremark}\label{defrem-canonical}
For the $p$-simplex $\Delta^p=<v_0, \ldots, v_p>$ let $W_i:=\iDelta^p\cup\iDelta^{p-1}_i$.
If the $C(\Delta^p)$-algebra $A$ is an $h_*$-fibration, then it follows from the above lemma that 
$h_*(A(W_i))=0$, which then 
implies that the boundary map {$\partial_i: h_q(A(\iDelta^{p-1}_i))\to h_{{{q-1}}}(A(\iDelta^p))$}
corresponding to the short exact sequence
$0\to A(\iDelta^p)\to A(W_i)\to A(\iDelta^{p-1}_i)\to 0$
is an isomorphism for all $q$.

In particular, there is a chain of isomorphisms
{$$h_q(A(\Delta^p))\stackrel{\ev_{v_0,*}}{\longrightarrow}
h_q(A_{v_0})\stackrel{\partial_1}{\longrightarrow} h_{{{q-1}}}(A(\iDelta^1))\stackrel{\partial_2}{\longrightarrow}
\cdots \stackrel{\partial_p}{\longrightarrow}h_{{{q-p}}}(A(\iDelta^p)),$$}
which we shall call the {\em canonical oriented isomorphism 
$$\Phi_q^p:h_q(A(\Delta^p))\stackrel{\cong}{\longrightarrow}
h_{q-p}(A(\iDelta^p)).$$}
\end{defremark}

It is important for us to understand how the canonical isomorphisms depends on the orientation of the simplex $\Delta^p$. We start this investigation with two basic observations. The  first
considers the case $\Delta^1=[0,1]$:

\begin{lem}\label{lem-orient1}
Suppose that $A([0,1])$ is a C*-algebra bundle which is an $h_*$-fibration.
Let 
$$\partial_0:h_q(A_0)\to h_{q-1}(A(0,1))\quad\text{and}\quad 
\partial_1:h_q(A_1)\to h_{q-1}(A(0,1))$$
denote the isomorphisms given by the connecting maps in  the long exact sequences 
related to evaluation of $A([0,1))$ and $A((0,1])$ at $0$ and $1$, respectively.
Then $\partial_0=-\, \partial_1\circ \Phi_{0,1}$. 
\end{lem}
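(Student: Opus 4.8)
The plan is to reduce the statement to a single algebraic identity about the two short exact sequences obtained by evaluating $A([0,1])$ at the endpoints $0$ and $1$, and then to track the relevant connecting maps through a common refinement. First I would set $J:=A((0,1))$ and recall that we have two short exact sequences,
$$0\to J\to A([0,1))\to A_0\to 0 \quad\text{and}\quad 0\to J\to A((0,1])\to A_1\to 0,$$
whose connecting maps are by definition $\partial_0$ and $\partial_1$. The key point is that both of these sit inside the long exact sequence attached to the single extension $0\to J\to A([0,1])\to A_0\oplus A_1\to 0$ obtained by evaluating simultaneously at both endpoints. Since $A([0,1])$ is an $h_*$-fibration, evaluation at each endpoint is an isomorphism $h_*(A([0,1]))\cong h_*(A_0)\cong h_*(A_1)$, and the isomorphism $\Phi_{0,1}$ is precisely the composite $\ev_{1,*}\circ\ev_{0,*}^{-1}$ as in Lemma \ref{lem-path}.

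Next I would exploit the naturality of the connecting maps with respect to the evident morphisms of short exact sequences. The two inclusions $A([0,1))\hookleftarrow A((0,1))\hookrightarrow A((0,1])$ together with the quotient maps to $A_0$ and $A_1$ organize into a Mayer--Vietoris type configuration: $[0,1)$ and $(0,1]$ cover $[0,1]$ with intersection $(0,1)$. The plan is to write down the Mayer--Vietoris sequence for this cover and identify its connecting map $h_q(A([0,1]))\to h_{q-1}(A((0,1)))$ in two ways. On one hand, using the decomposition through $A_0$ it equals $\partial_0\circ\ev_{0,*}$; on the other, through $A_1$ it equals $\partial_1\circ\ev_{1,*}$. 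The sign discrepancy $\partial_0=-\,\partial_1\circ\Phi_{0,1}$ then follows from the standard fact that the two faces of the cover enter the Mayer--Vietoris boundary with opposite signs, which is exactly the antisymmetry in the difference map $(a,b)\mapsto a-b$ on the overlap.

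The step I expect to be the main obstacle is pinning down the sign convention carefully enough that it is genuinely forced, rather than asserted. Abstract homology theories in the sense of Definition \ref{homtheory} only come with naturality of $\partial$, not with a built-in Mayer--Vietoris sequence, so I would need to construct the relevant diagram by hand. Concretely, I would compare the two extensions above by mapping both into the extension $0\to J\to A([0,1])\xrightarrow{(\ev_0,\ev_1)} A_0\oplus A_1\to 0$, use naturality of $\partial$ to express $\partial_0$ and $\partial_1$ as restrictions of the single boundary map $\partial\colon h_q(A_0\oplus A_1)\to h_{q-1}(J)$ to the two summands $h_q(A_0)$ and $h_q(A_1)$, and then observe that the composite $h_q(A([0,1]))\xrightarrow{q_*} h_q(A_0\oplus A_1)\xrightarrow{\partial} h_{q-1}(J)$ vanishes by exactness. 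Writing $q_*=(\ev_{0,*},\ev_{1,*})$ and using additivity $\partial=\partial_0\oplus\partial_1$ on the summands, the relation $\partial\circ q_*=0$ becomes $\partial_0\circ\ev_{0,*}+\partial_1\circ\ev_{1,*}=0$, and substituting $\ev_{1,*}=\Phi_{0,1}\circ\ev_{0,*}$ and cancelling the isomorphism $\ev_{0,*}$ yields exactly $\partial_0=-\,\partial_1\circ\Phi_{0,1}$.
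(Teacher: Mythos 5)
Your proposal is correct and takes essentially the same route as the paper: the paper's proof likewise maps the two endpoint extensions into the combined extension $0\to A((0,1))\to A([0,1])\to A_0\oplus A_1\to 0$, uses naturality to identify the connecting map as $\partial_0+\partial_1$ on the summands, invokes exactness to obtain $\partial_0\circ \ev_{0,*}+\partial_1\circ \ev_{1,*}=0$, and cancels the isomorphism $\ev_{0,*}$. The Mayer--Vietoris discussion in your middle paragraph is only motivation; the concrete argument in your final paragraph is precisely the paper's.
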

\begin{proof} Consider the long exact sequence 
$$\cdots\to h_q(A(0,1))\to h_q(A[0,1])\to h_q(A_0\oplus A_1)\stackrel{\partial}{\to} {h_{q-1}(A(0,1))}\to\cdots$$
corresponding to
$$0\to A((0,1))\to A([0,1])\to A_0 {\oplus} A_1\to 0.$$
The connecting map in this sequence equals $\partial_0+\partial_1$, which follows from 
naturality of the long exact sequence together with the diagram
$$
\begin{CD}
0 @>>>   A((0,1)) @>>> A([0,1)) @>>> A_0 @>>>  0\\
@.         @V=VV     @V\iota VV         @VV\iota V\\
0 @>>>   A((0,1)) @>>>{A([0,1])} @>>> A_0\oplus A_1 @>>>  0\\
  @.       @A=AA     @A\iota AA         @AA\iota A\\
0 @>>>   A((0,1)) @>>> A((0,1]) @>>> A_1 @>>>  0.
\end{CD}
$$
Exactness
then gives $$0=\partial_0\circ \ev_{0,*}+\partial_1\circ \ev_{1,*}$$ where
$\ev_{i,*}:h_q(A([0,1]))\to h_q(A_i)$, $i=0,1$, denotes the evaluation isomorphism.
Thus, we get
$ \partial_0\circ \ev_{0,*}=-\, \partial_1\circ \ev_{1,*}$ and composing both sides with
$\ev_{0,*}^{-1}$ on the right gives the lemma.
\end{proof}

In the next lemma we compare 
the isomorphisms  
$$h_q(A(\iDelta^{p-1}_{p-1}))\cong {h_{q-1}}(A(\iDelta^p))$$
and 
$$h_q(A(\iDelta^{p-1}_{p}))\cong  {h_{q-1}}(A(\iDelta^p))$$
as defined in 
\ref{defrem-canonical}  for the $p$-th and the $(p-1)$-st
face of $\Delta^p$:

\begin{lem}\label{lem-orient2}
Suppose that $p>1$. Then the compositions
$$h_r(A(\iDelta^{p-2}))\cong h_{r-1}(A(\iDelta^{p-1}_p))\cong h_{r-2}(A(\iDelta^p))$$
and
$$
h_r(A(\iDelta^{p-2}))\cong h_{r-1}(A(\iDelta^{p-1}_{p-1}))\cong h_{r-2}(A(\iDelta^p))$$
differ by the factor $-1$. 
%A similar result holds for a cohomology theory $h^*$.
\end{lem}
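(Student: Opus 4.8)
The plan is to exhibit all four connecting maps occurring in the two compositions as fragments of a single two-step filtration of one $C^*$-algebra, and then to obtain the sign purely from exactness, with essentially no sign-tracking at all.

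First I would fix the geometry. The two faces $\Delta^{p-1}_p=<v_0,\dots,v_{p-1}>$ and $\Delta^{p-1}_{p-1}=<v_0,\dots,v_{p-2},v_p>$ meet in the common $(p-2)$-face $\Delta^{p-2}=<v_0,\dots,v_{p-2}>$, and $\iDelta^{p-2}$ is exactly the face of each of them obtained by deleting the last vertex, so that the first isomorphism in each composition of the lemma is the corresponding map $\partial^{(p)}$, resp. $\partial^{(p-1)}$, of \ref{defrem-canonical}. In barycentric coordinates the pair $(t_{p-1},t_p)$ forms a closed corner near $\iDelta^{p-2}$, with $\iDelta^p=\{t_{p-1}>0,\,t_p>0\}$ the open quadrant, $\iDelta^{p-1}_p=\{t_p=0,\,t_{p-1}>0\}$ and $\iDelta^{p-1}_{p-1}=\{t_{p-1}=0,\,t_p>0\}$ the two open edges, and $\iDelta^{p-2}$ the corner. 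Setting $Z:=\iDelta^p\cup\iDelta^{p-1}_p\cup\iDelta^{p-1}_{p-1}\cup\iDelta^{p-2}$ and $W:=Z\setminus\iDelta^{p-2}$, the inclusions give nested ideals $I:=A(\iDelta^p)\subseteq M:=A(W)\subseteq T:=A(Z)$ (all in $\C$, as $\C$ is closed under ideals, quotients and extensions), since $\iDelta^p$ and $W$ are open in $Z$. Then $T/M\cong A(\iDelta^{p-2})$, and since the two edges are disjoint and relatively clopen in $W\setminus\iDelta^p$ one has $M/I\cong A(\iDelta^{p-1}_p)\oplus A(\iDelta^{p-1}_{p-1})$. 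Write $\delta\colon h_r(A(\iDelta^{p-2}))\to h_{r-1}(M)$ and $\delta'\colon h_{r-1}(M/I)\to h_{r-2}(I)$ for the connecting maps, $\rho\colon M\to M/I$ for the quotient with components $\rho_p,\rho_{p-1}$, and $\iota_p,\iota_{p-1}$ for the two summand inclusions.

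Next I would express each of the four maps via naturality of the connecting homomorphism, which introduces no sign. Restricting $T$ to the closed set $\{t_p=0\}\cap Z=\iDelta^{p-1}_p\cup\iDelta^{p-2}$ yields a morphism from $0\to M\to T\to A(\iDelta^{p-2})\to0$ onto the short exact sequence defining $\partial^{(p)}$ inside $\Delta^{p-1}_p$, whose left vertical arrow is $\rho_p$; hence $\partial^{(p)}=\rho_{p,*}\circ\delta$, and symmetrically $\partial^{(p-1)}=\rho_{p-1,*}\circ\delta$. Dually, the ideal $A(\iDelta^p\cup\iDelta^{p-1}_p)\subseteq M$ gives a morphism from the sequence defining $\partial_p$ into $0\to I\to M\to M/I\to0$ whose right vertical arrow is $\iota_p$; hence $\partial_p=\delta'\circ(\iota_p)_*$, and symmetrically $\partial_{p-1}=\delta'\circ(\iota_{p-1})_*$.

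Finally I would add the two compositions. Since $(\iota_p)_*\rho_{p,*}+(\iota_{p-1})_*\rho_{p-1,*}=\rho_*$, the factorisations give
$$\partial_p\circ\partial^{(p)}+\partial_{p-1}\circ\partial^{(p-1)}=\delta'\circ\rho_*\circ\delta.$$
But $\rho_*$ and $\delta'$ are consecutive arrows in the long exact sequence of $0\to I\to M\to M/I\to0$, so $\delta'\circ\rho_*=0$; the right-hand side vanishes and $\partial_p\circ\partial^{(p)}=-\,\partial_{p-1}\circ\partial^{(p-1)}$, which is the assertion (both being isomorphisms by \ref{defrem-canonical}). I expect the only genuine work to lie in the previous paragraph: verifying that the four naturality squares commute and that the restriction maps are precisely the projections $\rho_p,\rho_{p-1}$ and the inclusions $\iota_p,\iota_{p-1}$ — in particular that a section supported on $\{t_{p-1}>0\}$ restricts to zero on the opposite edge. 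Once these identifications are secured the sign is forced: it is nothing but the relation $\delta'\circ\rho_*=0$, so no separate computation in the style of Lemma \ref{lem-orient1} is required.
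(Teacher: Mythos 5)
Your proof is correct, and it lives on exactly the same piece of geometry as the paper's own argument: your $Z$ is the set $W=\iDelta^{p-2}\cup\iDelta^{p-1}_p\cup\iDelta^{p-1}_{p-1}\cup\iDelta^p$ used there, and in both proofs the sign is ultimately forced by the vanishing of a composition of two consecutive maps in a long exact sequence, after naturality has identified the four canonical boundary maps with maps of that sequence. The difference lies in which extensions of the filtration $I\subseteq M\subseteq T$ are used, and it is a genuine simplification. The paper works with $0\to I\to T\to T/I\to 0$ and with $0\to M/I\to T/I\to T/M\to 0$ (its algebra $A(W_{0,1})$ over the two open facets plus the corner is your $T/I$): it invokes Lemma~\ref{lem-contact} to get $h_*(T)=0$, hence an isomorphism $\partial_{0,1}:h_*(T/I)\to h_{*-1}(I)$, then a Five-Lemma argument (again fed by Lemma~\ref{lem-contact}) to see that the summand inclusions into $h_*(T/I)$ are isomorphisms, and finally the relation ``connecting map followed by inclusion equals zero'' in the second sequence. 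You use the complementary pair $0\to M\to T\to T/M\to 0$ and $0\to I\to M\to M/I\to 0$, with the zero relation $\delta'\circ\rho_*=0$; this eliminates the need for any vanishing results and for the Five Lemma in this step, leaving only naturality of connecting maps, exactness, and the additivity identity $\rho_*=(\iota_p)_*\rho_{p,*}+(\iota_{p-1})_*\rho_{p-1,*}$, which is exactly the additivity of $h_*$ recorded in Remark~\ref{rem-homtheory}(1). (Lemma~\ref{lem-contact} is of course still needed beforehand, in \ref{defrem-canonical}, to know that the two compositions are isomorphisms, but not for the sign.) The naturality checks you defer do go through: restriction of $T$ to the relatively closed set $\{t_p=0\}\cap Z$, respectively the ideal inclusion $A(\iDelta^p\cup\iDelta^{p-1}_p)\subseteq M$, produce precisely the two morphisms of short exact sequences you need, with vertical arrows $\rho_p$ and $\iota_p$, and the analogous statements hold for the other facet.
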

\begin{proof}
Let 
\begin{align*}
&W=\iDelta^{p-2}\cup \iDelta^{p-1}_p\cup\iDelta^{p-1}_{p-1}\cup \iDelta^p,\quad\quad\;\;
W_0=\iDelta^{p-2}\cup \iDelta^{p-1}_p,\\
& W_1=\iDelta^{p-2}\cup \iDelta^{p-1}_{p-1},\quad \quad \quad \quad \quad\quad \quad \quad
W_{0,1}=\iDelta^{p-1}_p\cup\iDelta^{p-1}_{p-1}\cup \iDelta^{p-2}.
\end{align*}
Then it follows from Lemma \ref{lem-contact} that all groups 
\begin{align*}
h_*(A(W))&=h_*(A(W_1))=h^*(A(W_0))\\
&=h_*(\iDelta^p\cup\iDelta^{p-1}_p)
=h_*(\iDelta^p\cup\iDelta^{p-1}_{p-1})
\end{align*}
vanish.
Since $h^*(A(W))=0$, the boundary map in the long exact sequence 
for
$$0\to A(\iDelta^p)\to A(W)\to A(W_{0,1})\to 0$$
induces an isomorphism 
$$\partial_{0,1}: h_{l}(A(W_{0,1}))\to {h_{l-1}}(A(\iDelta^p)).$$
Naturality of the long exact sequences and the diagram
$$
\begin{CD}
0 @>>>   A(\iDelta^p) @>>> A(\iDelta^p\cup\iDelta^{p-1}_p) @>>> A(\iDelta^{p-1}_p) @>>>  0\\
 @.        @V=VV     @V\iota VV         @VV\iota V\\
0 @>>>   A(\iDelta^p) @>>> A(W) @>>>  A(W_{0,1}) @>>>  0\\
 @.        @A=AA     @A\iota AA         @AA\iota A\\
0 @>>>   A(\iDelta^p) @>>> A(\iDelta^p\cup\iDelta^{p-1}_{p-1}) @>>> A(\iDelta^{p-1}_{p-1}) @>>>  0
\end{CD}
$$
together with the Five-Lemma shows that the inclusions of 
$\iDelta^{p-1}_p$ and $\iDelta^{p-1}_{p-1}$ into $W_{0,1}$ induce isomorphisms 
$$\iota_p: h_l(A(\iDelta^{p-1}_p))\stackrel{\cong}{\to} { h_{l}}(A(W_{0,1}))$$ and 
$$\iota_{p-1}:h_l(A(\iDelta^{p-1}_{p-1}))\stackrel{\cong}{\to} { h_{l}}(A(W_{0,1}))$$
such that 
\begin{equation}
\label{eq-part}
\partial_{0,1}\circ \iota_p= \partial_p\quad\text{and}\quad
\partial_{0,1}\circ \iota_{p-1}= \partial_{p-1},
\end{equation}
where, as before, $\partial_i:h_l(A(\iDelta^p_i))\to {h_{l-1}}(A(\iDelta^p))$ denotes the 
isomorphism for the $i$th face of $\Delta^p$.
We now look at the diagram
\begin{equation}\label{commdia}
\begin{CD}
0 @>>>   A(\iDelta^{p-1}_p) @>>> A(W_0) @>>> A(\iDelta^{p-2}) @>>>  0\\
   @.      @A q AA     @A q AA        @AA = A\\
0 @>>>   A(\iDelta^{p-1}_p\cup\iDelta^{p-1}_{p-1}) 
@>>> A(W_{0,1}) @>>> A(\iDelta^{p-2}) @>>>  0\\
  @.       @V q VV    @V q VV         @VV = V\\
0 @>>>   A(\iDelta^{p-1}_{p-1}) @>>> A(W_1) @>>> A(\iDelta^{p-2}) @>>>  0
\end{CD}
\end{equation}
It implies that the composition of the boundary map
$$h_{r}(A(\iDelta^{p-2}))\to {h_{r-1}}(A(\iDelta^{p-1}_p\cup\iDelta^{p-1}_{p-1}) )$$
followed by the projections onto ${h_{r-1}}(A(\iDelta^{p-1}_p))$ and $
{h_{r-1}}(A(\iDelta^{p-1}_{p-1}))$,
respectively, coincide with the boundary maps
$$\partial^{p-2}_p:h_{r}(A(\iDelta^{p-2}))\cong{h_{r-1}}(A(\iDelta^{p-1}_p))$$
and 
$$\partial^{p-2}_{p-1}:h_{r}(A(\iDelta^{p-2}))\cong{h_{r-1}}(A(\iDelta^{p-1}_{p-1})),$$
respectively. On the other hand, it is clear that the isomorphisms
$$\iota_p:{h_{r-1}}(A(\iDelta^{p-1}_p))\cong{h_{r-1}}(A(W_{0,1}))$$ and
$$\iota_{p-1}:{h_{r-1}}(A(\iDelta^{p-1}_{p-1}))\cong{h_{r-1}}(A(W_{0,1}))$$
factorise via the inclusions of 
${h_{r-1}}(A(\iDelta^{p-1}_p))$ and ${h_{r-1}}(A(\iDelta^{p-1}_{p-1}))$
as direct summands of ${h_{r-1}}(A(\iDelta^{p-1}_p\cup\iDelta^{p-1}_{p-1}) )$, and 
that these inclusions invert the projections on the summands which are induced from 
the quotient maps in the first vertical row of  Diagram (\ref{commdia}). This implies exactness
of the sequence
{\small $$
\begin{CD}
h_r(A(\iDelta^{p-2})) @>\partial^{p-2}_{p}\oplus\partial^{p-2}_{p-1}>>
{h_{r-1}}(A(\iDelta^{p-1}_p))\oplus {h_{r-1}}(A(\iDelta^{p-1}_{p-1}))
@>\iota_p+\iota_{p-1} >>{h_{r-1}}(A(W_{0,1})).
\end{CD}
$$}
Combining this with Equation (\ref{eq-part}) gives 
$$0=\partial_{0,1}\circ (i_p\circ \partial^{p-2}_p +i_{p-1}\circ \partial^{p-2}_{p-1})=
\partial_p\circ \partial^{p-2}_p+\partial_{p-1}\circ \partial^{p-2}_{p-1},$$
which finally finishes the proof.
\end{proof}

We now want to consider an arbitrary permutation  
$\varphi:\{0,\ldots,p\}\to\{0,\ldots,p\}$. We shall denote by the same 
letter the unique affine isomorphism $\Delta^p\stackrel{\cong}{\longrightarrow}
\Delta^p$ which is induced by applying $\varphi$ on the vertices. Let 
$$\Psi_{\varphi}:A(\Delta^p)\to \varphi^*A(\Delta^p)$$
be the isomorphism of \cite[Lemma 1.3]{ENO}. It clearly restricts to an isomorphism,
also denoted $\Psi_{\varphi}$, between the ideals $A(\iDelta^p)$ and $\varphi^*A(\iDelta^p)$.
We then get

\begin{prop}\label{prop-affine}
Let $\sign(\varphi)$ denote the sign of the permutation $\varphi:\{0,\ldots, p\}\to \{0,\ldots, p\}$.
Then the following diagram commutes
\begin{equation}\label{eq-affine}
\begin{CD}
h_q(A(\Delta^p))     @>\Psi_{\varphi,*}>>      h_q(\varphi^*A(\Delta^p))\\
@V \Phi_p^q VV         @VV\sign(\varphi) \Phi_p^q V\\
h_{q+p}(A(\iDelta^p))    @>\Psi_{\varphi,*}>>   h_{q+p}(\varphi^*A(\iDelta^p)).
\end{CD}
\end{equation}
%A similar statement (with reversed arrows) holds for a cohomology theory $h^*$.
\end{prop}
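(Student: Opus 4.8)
The plan is to reduce the general permutation $\varphi$ to a product of adjacent transpositions $\tau_k=(k-1,k)$, $1\le k\le p$, and to verify the sign rule for each such transposition using the two orientation lemmas. Since $S_{p+1}$ is generated by the $\tau_k$ and $\sign$ is multiplicative, it suffices to show (i) that the scalar by which the diagram (\ref{eq-affine}) fails to commute is multiplicative in $\varphi$, and (ii) that each $\tau_k$ contributes the factor $-1$.

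For (i) I would first record the naturality of $\Psi_\varphi$: by the construction in \cite[Lemma 1.3]{ENO} the map $\Psi_\varphi$ is $\varphi$-equivariant, so it intertwines evaluation at a vertex $v$ with evaluation at $\varphi(v)$ and intertwines the boundary maps $\partial_i$ of \ref{defrem-canonical} attached to the ordered flag $v_0<\cdots<v_p$ with the corresponding boundary maps of $\varphi^*A$ attached to the permuted flag $v_{\varphi(0)}<\cdots<v_{\varphi(p)}$. Consequently, pushing the canonical chain for $A$ forward by $\Psi_{\varphi,*}$ produces the canonical chain for $\varphi^*A$ computed with respect to the $\varphi$-reordered flag. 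The content of the proposition is then internal to the single algebra $\varphi^*A$: the canonical isomorphism built from the standard flag and the one built from the reordered flag differ by the scalar $\sign(\varphi)$. Because reordering of flags composes and the cocycle identity for the $\Psi$'s matches the composition, the multiplicativity of the resulting scalars is automatic, and the claim reduces to the behaviour under a single $\tau_k$.

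For (ii) I treat two cases. The transposition $\tau_1=(0,1)$ changes the base point of the initial evaluation from $v_0$ to $v_1$ and reverses the orientation of the edge $\iDelta^1$; restricting to $\langle v_0,v_1\rangle\cong[0,1]$ and combining $\ev_{v_1,*}=\Phi_{0,1}\circ\ev_{v_0,*}$ from Lemma \ref{lem-path} with the relation $\partial_0=-\partial_1\circ\Phi_{0,1}$ of Lemma \ref{lem-orient1}, the first two maps of the chain pick up exactly one factor $-1$, while the remaining boundary maps $\partial_2,\dots,\partial_p$ are literally unchanged because the faces $\iDelta^2,\dots,\iDelta^p$ all contain both $v_0$ and $v_1$ and are fixed as sets. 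For $\tau_k=(k-1,k)$ with $k\ge 2$ the two flags agree on every face of dimension $\le k-2$ and $\ge k$ and differ only in the $(k-1)$-dimensional face, where one uses $\langle v_0,\dots,v_{k-1}\rangle$ and the other $\langle v_0,\dots,v_{k-2},v_k\rangle$; applying Lemma \ref{lem-orient2} to the subsimplex $\Delta^k=\langle v_0,\dots,v_k\rangle$ (with $p$ there replaced by $k$) shows that the two consecutive boundary maps through these faces differ by $-1$, while the shared maps $\partial_1,\dots,\partial_{k-2}$ and $\partial_{k+1},\dots,\partial_p$ contribute identically. In both cases the net factor is $-1=\sign(\tau_k)$, which by (i) gives $\sign(\varphi)$ in general.

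The main obstacle I anticipate is the bookkeeping in step (ii): one must verify that precisely one evaluation-or-boundary map in the canonical chain is affected by an adjacent transposition and that $\Psi_\varphi$ genuinely intertwines the untouched maps, so that no spurious signs are introduced. This rests on the equivariance and functoriality of the $\Psi_\varphi$ from \cite[Lemma 1.3]{ENO}, namely on a cocycle identity of the form $\Psi_{\varphi\psi}=\psi^*(\Psi_\varphi)\circ\Psi_\psi$ that legitimizes the reduction to generators and aligns it with the homomorphism property of $\sign$. Once that naturality is in hand the sign computations themselves are immediate from Lemmas \ref{lem-orient1} and \ref{lem-orient2}, so the care is concentrated entirely in the equivariance and the face-tracking.
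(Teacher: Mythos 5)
Your proposal is correct and follows essentially the same route as the paper's proof: reduction to adjacent transpositions via $\Psi_\varphi$, the case $(0,1)$ handled by Lemma \ref{lem-path} together with Lemma \ref{lem-orient1}, and the case $(l,l+1)$ with $l>0$ handled by Lemma \ref{lem-orient2} applied to the relevant subsimplex (the paper phrases this as the reduction ``without loss of generality $l=p-1$,'' which is the same observation that the untouched faces make the tails of the two chains coincide). The only difference is presentational: you make explicit the cocycle identity for the $\Psi$'s that justifies multiplicativity of the defect scalar, a point the paper leaves implicit.
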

\begin{proof}
Since every permutation is a product of transpositions which interchange two 
neighbours in $\{0,\ldots,p\}$, we may assume without loss of generality that 
$\varphi$ interchanges $l$ with $l+1$.

If we identify $A(\Delta^p)$ with $\varphi^*A(\Delta^p)$ via $\Psi_{\varphi}$, 
and if we write $\Delta^l_{\varphi}$ for the $l$-dimensional face 
$<\varphi(v_0),\ldots, \varphi(v_l)>\subseteq \Delta^p$,
the  above diagram restricts to showing that the isomorphism given by the composition
\begin{align*}
h_q(A(\Delta^p))&\stackrel{\ev_{v_{\varphi(0)},*}}{\longrightarrow}
h_q(A_{\varphi(v_0)})
\longrightarrow h_{q+1}(A(\iDelta^1_{\varphi}))\\
&\longrightarrow h_{q+2}(A(\iDelta^2_{\varphi}))
\longrightarrow
\cdots
\longrightarrow h_{q+p}(A(\iDelta^p))
\end{align*}
differs from the canonical isomorphism $\Phi_q^p:h_q(A(\Delta^p))\to h_{q+p}(A(\iDelta^p))$
by  $\sign(\varphi)$. For this we first remark, that by Lemma \ref{lem-path}
we have the equality 
$$\Phi_{v_0,\varphi(v_0)}\circ \ev_{v_0,*}=\ev_{\varphi(v_0),*}:h_q(A(\Delta^p))\to h_q(A_{\varphi(v_0)}),
$$
which then implies that we have to prove that the isomorphisms
$$\Theta:h_q(A_{v_0})\cong h_{q-1}(A(\iDelta^1))\cong h_{q-2}(A(\iDelta^2))\cdots\cong h_{q-p}(A(\iDelta^p))$$
and
{$$\Theta_{\varphi}:h_q(A_{\varphi(v_0)})\cong h_{q-1}(A(\iDelta^1_{\varphi}))
\cong h_{q-2}(A(\iDelta^2_{\varphi}))\cdots\cong h_{q-p}(A(\iDelta^p))$$}
are related via 
$$\Theta=\sign(\varphi)\Theta_{\varphi}\circ \Phi_{v_0,\varphi(v_0)}.$$

If $\varphi$ permutes $0$ with $1$, then we have $\Delta^j_{\varphi}=\Delta^j$ for all $j\geq 1$, so the 
above equation reduces to the case $p=1$. This case is taken care for by Lemma \ref{lem-orient1}
above. If $\varphi$ permutes $l$ with $l+1$ for some  $l>0$, then we have 
$\Delta^j_{\varphi}=\Delta^j$ for all $j\geq l+1$, so we may assume
without loss of generality that $l=p-1$. We then also have $\Delta^{p-2}_\varphi=\Delta^{p-2}$
and all we have to show is that the compositions
$$h_r(A(\iDelta^{p-2}))\cong  {h_{r-1}}(A(\iDelta^{p-1}_p))\cong  {h_{r-2}}(\iDelta^p))$$
and 
$$h_r(A(\iDelta^{p-2}))\cong  {h_{r-1}}(A(\iDelta^{p-1}_{p-1}))\cong  {h_{r-2}}(\iDelta^p))$$
differ by the factor $-1$.
But this is shown in Lemma \ref{lem-orient2}. This completes  the proof.
\end{proof}

In order to state the following important corollary, let us note that if $A(\Delta^p)$ is an 
$h_*$-fibration and if $\Delta^{p-1}_l$ is a face of $\Delta^p$, then the quotient map
$A(\Delta^p)\to A(\Delta^{p-1}_l)$ induces an isomorphism
\begin{equation}\label{eq-res}
\res_l:h_*(A(\Delta^p))\to h_*(A(\Delta^{{p-1}}_l))
\end{equation}
which follows from the simple fact that evaluation at any common vertex induces an
isomorphism in $h_*$-theory for both algebras. With this notation we now get

\begin{cor}\label{cor-lface}
Suppose that $A(\Delta^p)$ is an $h_*$-fibration and let
$$\partial_l:h_{q-p+1}(A(\iDelta^{p-1}_l))\to h_{q-p}(A(\iDelta^p))$$ denote 
the isomorphism of Definition \ref{defrem-canonical}. Let
$\Phi_p^q:h_q(A(\Delta^p))\to h_{q-p}(A(\iDelta^p))$ denote the canonical isomorphism 
and let
$$\Phi_{p-1}^q:h_q(A(\Delta^{p-1}_l))\to h_{q-p+1}(A(\iDelta^{p-1}_l))$$
be the canonical isomorphism  for $A(\Delta^{p-1}_l)$, with respect to the
orientation  of $\Delta^{p-1}_l$ inherited from $\Delta^p$.
Then 
$$\partial_l\circ \Phi_{p-1}^q\circ \res_l= (-1)^{{p-l}} \Phi_p^q.$$
\end{cor}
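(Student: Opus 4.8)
The plan is to reduce everything to the single ``last-face'' case $l=p$, which is essentially a matter of unwinding Definition and Remark \ref{defrem-canonical}, and then to obtain the general sign $(-1)^{p-l}$ by transporting this case under a suitable vertex permutation via Proposition \ref{prop-affine}.

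First I would dispose of the base case $l=p$. Recall that $\Phi_p^q$ is by definition the composite $\partial_p\circ\partial_{p-1}\circ\cdots\circ\partial_1\circ\ev_{v_0,*}$, where the intermediate maps run through the nested interiors $\iDelta^1\subseteq\cdots\subseteq\iDelta^{p-1}$, and $\iDelta^{p-1}$ is exactly the interior of $\Delta^{p-1}_p=\langle v_0,\ldots,v_{p-1}\rangle$. The key observation is that each of $\ev_{v_0,*}$ and $\partial_1,\ldots,\partial_{p-1}$ is \emph{intrinsic} to the sub-simplex $\Delta^j=\langle v_0,\ldots,v_j\rangle$ with $j\leq p-1$, hence is shared verbatim by $\Delta^p$ and by its face $\Delta^{p-1}_p$. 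Since $v_0$ lies in $\Delta^{p-1}_p$, evaluation at $v_0$ factors as $\ev_{v_0,*}=\ev_{v_0,*}\circ\res_p$, so the composite of everything up to $\partial_{p-1}$ equals $\Phi_{p-1}^q\circ\res_p$, and one further application of $\partial_p$ recovers $\Phi_p^q$. This gives $\partial_p\circ\Phi_{p-1}^q\circ\res_p=\Phi_p^q$, i.e. the formula with sign $(-1)^{p-p}=1$.

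For general $l$ I would introduce the cyclic permutation $\varphi=(l\ l+1\ \cdots\ p)$, which moves $v_l$ into the last slot while keeping $v_0,\ldots,\widehat{v_l},\ldots,v_p$ in increasing order; it is a $(p-l+1)$-cycle, so $\sign(\varphi)=(-1)^{p-l}$. Under the induced affine map, the ``last face'' $\Delta^{p-1}_p$ of the domain is carried precisely onto $\Delta^{p-1}_l$, equipped with the orientation $\langle v_0,\ldots,\widehat{v_l},\ldots,v_p\rangle$ inherited from $\Delta^p$; correspondingly $\Psi_\varphi$ intertwines $\res_p$, $\partial_p$ and $\Phi_{p-1}^q$ (for the last face) with $\res_l$, $\partial_l$ and $\Phi_{p-1}^q$ (for $\Delta^{p-1}_l$ with its inherited orientation). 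Applying the base case to $\varphi^*A$, transporting the resulting identity back to $A$ through the $C(\Delta^p)$-algebra isomorphism $\Psi_\varphi$ of \cite[Lemma 1.3]{ENO}, and using Proposition \ref{prop-affine} to replace $\Phi_p^q$ of $\varphi^*A$ by $\sign(\varphi)\,\Phi_p^q$ of $A$, then yields $\partial_l\circ\Phi_{p-1}^q\circ\res_l=(-1)^{p-l}\Phi_p^q$.

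I expect the main obstacle to be purely the bookkeeping of the identifications under $\Psi_\varphi$: one must check that restriction to a face and the connecting isomorphism of Definition and Remark \ref{defrem-canonical} are natural with respect to $\Psi_\varphi$ (this uses that $\Psi_\varphi$ restricts to the ideals attached to the relevant interiors, together with naturality of the long exact sequence), and that $\varphi$ really carries $\Delta^{p-1}_p$ onto $\Delta^{p-1}_l$ with the correct induced orientation. All genuine sign information, however, is already packaged in Proposition \ref{prop-affine} (ultimately in Lemmas \ref{lem-orient1} and \ref{lem-orient2}), so no new sign computation is required; the only arithmetic is the identity $\sign(\varphi)=(-1)^{p-l}$ for the relevant cycle.
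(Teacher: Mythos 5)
Your proof is correct and follows essentially the same route as the paper's: the paper also takes the cycle $\varphi=(l\;\,l+1\;\cdots\;p)$ with $\sign(\varphi)=(-1)^{p-l}$, identifies the canonical isomorphism of $\varphi^*A(\Delta^p)$ with $\partial_l\circ\Phi_{p-1}^q\circ\res_l$ (which is exactly your base case $l=p$ transported through $\Psi_\varphi$, a step the paper asserts directly), and then invokes Proposition \ref{prop-affine} to produce the sign. Your version merely makes the unwinding of Definition \ref{defrem-canonical} and the naturality bookkeeping explicit, which the paper leaves implicit.
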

\begin{proof}
Consider the permutation $\varphi:\{0,\ldots,p\}\to\{0,\ldots,p\}$
defined by  
\begin{itemize}
\item $\varphi(k)=k$ for $k=0,\ldots,l-1$;
\item $\varphi(k)=k+1$ for $k=l,\ldots,p-1$;
\item $\varphi(p)=l$. 
\end{itemize}
Then $\sign(\varphi)=(-1)^{{p-l}}$. If we identify $\varphi^*A$ with $A$ 
via $\Phi_{\varphi}$, as done in the proof of Proposition \ref{prop-affine}, then 
the isomorphism $\Phi_p^q$ for the fibration $\varphi^*A(\Delta^p)$
just becomes $\partial_l\circ \Phi_{p-1}^q$. By Proposition \ref{prop-affine}
this differs from the isomorphism $\Phi_p^q$ for $A(\Delta^p)$ by the factor
$\sign(\varphi)=(-1)^{{p-l}}$.
\end{proof}

\noindent
{\bf The Leray-Serre spectral theorem.}
We are now going back to the situation of Proposition \ref{prop-exact1} in the special case
where $X$ is a finite dimensional simplicial complex.  In what follows, we write
$C_p$ for the set of oriented 
closed $p$-simplexes in $X$. To be more precise,  we consider any element
$\sigma\in C_p$ as a given affine realization $\sigma:\Delta^p\to \Delta^p_{\sigma}\subseteq X$ 
of the closed $p$-cell $\Delta^p_{\sigma}$ of $X$, which then induces an orientation on $\Delta^p_{\sigma}$.
If $\sigma\in C_p$, we write $\sigma_l:\Delta^{p-1}_l\to \Delta^{p-1}_{\sigma,l}\subseteq X$ 
for the $l$-th face of $\sigma$. Then there exists 
a unique element $\tau\in C_{p-1}$ such that $\tau(\Delta^{p-1})=\sigma_l(\Delta^{p-1}_l)$
and then a unique affine transformation $\varphi_{\sigma,\tau}^l:\Delta^{p-1}\to \Delta^{p-1}{\cong}\Delta^{p-1}_l$
such that 
\begin{equation}\label{eq-permute}
\tau=\sigma_l\circ \varphi_{\sigma,\tau}^l.
\end{equation}
These give precisely the gluing data for our simplicial complex $X$.

As outlined in Remark \ref{rem-E2},
under suitable finiteness conditions explained there, the $E_1$ terms are then given by
$$E_1^{p,q}=\oplus_{\sigma\in C_p}h_q(A(\iDelta^p_{\sigma}))$$
and the differential $d:E_1^{p-1,{q+1}}\to E_1^{p,q}$ applied to the direct summand 
$h_{{q+1}}(A(\iDelta^{p-1}_\tau))$ of $E_1^{p-1,{q+1}}$ projects to  the direct summand
$h_q(A(\iDelta^p_\sigma))$ of $E_1^{p,q}$
via the chain of maps
\begin{equation}\label{eq-tau-sigma}
\begin{split}
h_{{{q+1}}}(A(\iDelta^{p-1}_\tau))&\stackrel{I_{{{q+1}}}^{p-1}(\tau)}{\longrightarrow}
h_{{{q+1}}}(A_{p-1,p-2})\\
&\stackrel{\iota_*}{\longrightarrow}
h_{{{q+1}}}(A_{p-1})\stackrel{\partial}{\longrightarrow} 
h_q(A_{p,p-1})\stackrel{Q_q^p(\sigma)}{\longrightarrow}
h_q(A(\iDelta^p_{\sigma})),
\end{split}
\end{equation}
where as before, $A_p=A(X_p)\,,
A_{p,p-1}=A(X_p\setminus X_{p-1})\cong\oplus_{\sigma\in
 C_p}A(\iDelta^p_{\sigma})$,
{$I^{p-1}(\tau):A(\iDelta^{p-1}_\tau)\to A_{p-1,p-2}$ is the
inclusion map corresponding to the open cell $\iDelta^p_{\sigma}$ and
$Q^p(\sigma):A_{p,p-1}\to A(\iDelta^p_{\sigma})$ is the quotient map.}
We now apply the inverses of the canonical isomorphisms
$${\Phi_{{p+q}}^p: h_{p+q}(A(\Delta^p_{\sigma}))} \to h_q(A(\iDelta^p_{\sigma}))$$ 
(see Definition \ref{defrem-canonical})
to each simplex 
$\sigma\in C_p$ (and similarly for $\tau\in C_{p-1}$) which gives us isomorphisms
$$E_1^{p,q}\cong \oplus_{\sigma\in C_p}h_{{p+q}}(A(\Delta^p_{\sigma}))\quad
\text{and}\quad
E_1^{p-1,{{q+1}}}\cong \oplus_{\tau\in C_{p-1}}h_{{p+q}}(A(\Delta^{p-1}_{\tau})).$$
In this picture, the differential is described on the summands via
\begin{equation}\label{eq-differential}
\begin{split}
d_{\tau}^{\sigma} : h_{{p+q}}(A(\Delta^{p-1}_{\tau}))&\stackrel{\Phi_{{p+q}}^{p-1}}{\longrightarrow}
h_{{{q+1}}}(A(\iDelta^{p-1}_\tau))\stackrel{I_{{{q+1}}}^{p-1}(\tau)}{\longrightarrow}
h_{{{q+1}}}(A_{p-1,p-2})
\stackrel{\iota_*}{\longrightarrow}
h_{{{q+1}}}(A_{p-1})\\
&\stackrel{\partial}{\longrightarrow} 
h_q(A_{p,p-1})\stackrel{Q_q^p(\sigma)}{\longrightarrow}
h_q(A(\iDelta^p_{\sigma}))
\stackrel{(\Phi_{{q+p}}^p)^{-1}}{\longrightarrow}
h_{{p+q}}(A(\Delta^p_{\sigma})).
\end{split}
\end{equation}
We then show:

\begin{prop}\label{prop-differential}
Suppose that $A(X)$ is an $h_*$-fibration over
the finite dimensional simplicial complex $X$. 
Then the map 
$d_{\tau}^{\sigma}$ in (\ref{eq-differential}) is zero, if $\Delta^{p-1}_\tau$ is not a 
face of $\Delta^p_{\sigma}$, and we have
$$ d_{\tau}^{\sigma}=(-1)^{{p-l}}\sign(\varphi_{\sigma,\tau}^l)(\res_{\tau}^{\sigma})^{-1},$$
if $\Delta^{p-1}_{\tau}=\Delta^{p-1}_{{\sigma,l}}$,
where we denote by $\res_{\tau}^{\sigma}:h_*(A(\Delta^p_{\sigma}))\stackrel{\cong}{\longrightarrow}
h_*(A(\Delta^{p-1}_{\tau}))$
the isomorphism induced by  the quotient map $A(\Delta^p_{\sigma})\to A(\Delta^p_{\tau})$.
\end{prop}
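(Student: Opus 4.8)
The plan is to reduce the computation of the global differential $d_\tau^\sigma$ in (\ref{eq-differential}) to a purely local computation on the single closed simplex $\Delta^p_\sigma$, and then to read off the answer from the sign bookkeeping already established in Proposition \ref{prop-affine} and Corollary \ref{cor-lface}. The geometric content is the localisation, and the delicate part is aligning the two independent sign contributions.

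First I would localise the connecting map. Restricting the defining extension $0\to A_{p,p-1}\to A_p\to A_{p-1}\to 0$ to the closed cell $\Delta^p_\sigma$ yields the extension $0\to A(\iDelta^p_\sigma)\to A(\Delta^p_\sigma)\to A(\bDelta^p_\sigma)\to 0$, and the quotient map $Q^p(\sigma)\colon A_{p,p-1}\to A(\iDelta^p_\sigma)$ together with restriction $A_{p-1}\to A(\bDelta^p_\sigma)$ (legitimate since $\bDelta^p_\sigma\subseteq X_{p-1}$) assemble into a morphism of extensions. By naturality of the boundary maps, the composite $Q_q^p(\sigma)\circ\partial$ occurring in (\ref{eq-differential}) equals $\partial_\sigma\circ r$, where $r\colon h_{q+1}(A_{p-1})\to h_{q+1}(A(\bDelta^p_\sigma))$ is restriction and $\partial_\sigma$ denotes the boundary map of the local extension.

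With this reduction the two cases become transparent. If $\Delta^{p-1}_\tau$ is not a face of $\Delta^p_\sigma$, then the open $(p-1)$-cell $\iDelta^{p-1}_\tau$ is disjoint from $\bDelta^p_\sigma$ (all $(p-1)$-cells meeting $\bDelta^p_\sigma$ are faces of $\sigma$), so the ideal $A(\iDelta^{p-1}_\tau)\subseteq A_{p-1}$ restricts to $0$ in $A(\bDelta^p_\sigma)$; hence $r\circ\iota_*\circ I^{p-1}(\tau)=0$ and $d_\tau^\sigma=0$. If instead $\Delta^{p-1}_\tau=\Delta^{p-1}_{\sigma,l}$, I would identify $\partial_\sigma$ composed with the inclusion of the open $l$-th face $h_{q+1}(A(\iDelta^{p-1}_{\sigma,l}))\to h_{q+1}(A(\bDelta^p_\sigma))$ with the isomorphism $\partial_l$ of Definition \ref{defrem-canonical}; this follows from naturality applied to the morphism of extensions relating $0\to A(\iDelta^p_\sigma)\to A(W_l)\to A(\iDelta^{p-1}_{\sigma,l})\to 0$, with $W_l=\iDelta^p_\sigma\cup\iDelta^{p-1}_{\sigma,l}$, to the local extension above. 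After this step the differential becomes exactly $(\Phi^p_{p+q})^{-1}\circ\partial_l\circ\Phi^{p-1}_{p+q}$, where the canonical isomorphism $\Phi^{p-1}_{p+q}$ is taken with respect to the orientation of the face determined by $\tau$.

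The remaining, and genuinely delicate, point is the orientation bookkeeping, which I expect to be the main obstacle. Since $\tau=\sigma_l\circ\varphi^l_{\sigma,\tau}$ by (\ref{eq-permute}), the orientation of the face carried by $\tau$ differs from the one inherited from $\sigma$ by the permutation $\varphi^l_{\sigma,\tau}$, and by Proposition \ref{prop-affine} the two corresponding canonical isomorphisms differ by the factor $\sign(\varphi^l_{\sigma,\tau})$. Replacing $\Phi^{p-1}_{p+q}$ by the canonical isomorphism for the inherited orientation and invoking Corollary \ref{cor-lface} (which supplies the factor $(-1)^{p-l}$ together with the identity $\res_l=\res_\tau^\sigma$) then collapses the composite to $(-1)^{p-l}\sign(\varphi^l_{\sigma,\tau})(\res_\tau^\sigma)^{-1}$, as claimed. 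The difficulty lies entirely in keeping the two independent sign sources, the combinatorial $(-1)^{p-l}$ from the face ordering and the $\sign(\varphi^l_{\sigma,\tau})$ from the gluing data, correctly coupled to the degree shifts built into the canonical isomorphisms $\Phi$.
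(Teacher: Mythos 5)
Your proof is correct and follows essentially the same strategy as the paper's: localise the chain (\ref{eq-tau-sigma}) to the closed cell $\Delta^p_\sigma$ using naturality of the connecting maps, observe in the non-face case that the algebra-level composite is already zero, in the face case identify the localised map with the boundary isomorphism $\partial_l$ of Definition \ref{defrem-canonical}, and then extract the signs from Proposition \ref{prop-affine} (reorientation by $\varphi^l_{\sigma,\tau}$) and Corollary \ref{cor-lface} (the factor $(-1)^{p-l}$ and $\res_l=\res_\tau^\sigma$). The only difference is organisational: you inline the localisation as a single naturality square attached to the restriction morphism of extensions from $0\to A_{p,p-1}\to A_p\to A_{p-1}\to 0$ onto $0\to A(\iDelta^p_\sigma)\to A(\Delta^p_\sigma)\to A(\bDelta^p_\sigma)\to 0$, whereas the paper packages the same naturality argument as Lemma \ref{lem-tau-sigma} on restriction to subcomplexes containing both cells.
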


For the proof of the proposition, we need the following lemma:

\begin{lem}\label{lem-tau-sigma}
Suppose that $\sigma\in C_p$ and $\tau\in C_{p-1}$ are as above. Suppose further
that $Z$ is any closed simplicial sub-complex of $X_p$ such that $\Delta^{p-1}_{\tau}$
and $\Delta^p_{\sigma}$ are contained in $Z$. Then the differential
$d_{\tau}^{\sigma}$ of (\ref{eq-differential}) coincides with the same map, if we
replace $X$ by $Z$.
\end{lem}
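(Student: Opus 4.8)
The plan is to realize the differential $d_{\tau}^{\sigma}$ as the composition in (\ref{eq-differential}), in which the only \emph{globally} defined ingredients are the map $\iota_*\colon h_{q+1}(A_{p-1,p-2})\to h_{q+1}(A_{p-1})$ and the boundary map $\partial$ attached to the skeletal extension $0\to A_{p,p-1}\to A_p\to A_{p-1}\to 0$, and then to exploit naturality of $\partial$ under restriction to $Z$. First I would observe that the two outer canonical isomorphisms $\Phi_{p+q}^{p-1}$ and $(\Phi_{q+p}^p)^{-1}$, together with the summand inclusion $I^{p-1}(\tau)$ and the summand projection $Q^p(\sigma)$, are \emph{local}: each is defined solely in terms of $A$ restricted to the closed simplices $\Delta^{p-1}_{\tau}$ and $\Delta^p_{\sigma}$ (and the corresponding open cells), which lie in $Z$ and are therefore literally unchanged when $X$ is replaced by $Z$.

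The heart of the argument is the comparison of the middle portion $Q^p(\sigma)_*\circ\partial\circ\iota_*\circ I^{p-1}(\tau)_*$ over $X$ with its analogue over $Z$. Since $Z$ is a closed subcomplex of $X_p$, restriction to $Z$ gives surjective $*$-homomorphisms $r\colon A_p\to A(Z_p)$ and $r\colon A_{p-1}\to A(Z_{p-1})$, where $Z_{p-1}:=Z\cap X_{p-1}$ and $Z_p=Z$. A section of $A_p$ vanishing on $X_{p-1}$ restricts to a section of $A(Z_p)$ vanishing on $Z_{p-1}$, and similarly one dimension down, so these restriction maps assemble into a morphism of short exact sequences
\begin{equation*}
\begin{CD}
0 @>>> A_{p,p-1} @>>> A_p @>>> A_{p-1} @>>> 0\\
@. @VrVV @VrVV @VrVV\\
0 @>>> A(Z_p\setminus Z_{p-1}) @>>> A(Z_p) @>>> A(Z_{p-1}) @>>> 0.
\end{CD}
\end{equation*}

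Next I would record the elementary behaviour of the cell maps under $r$. On the level of direct sums $A_{p,p-1}\cong\bigoplus_{\sigma'\in C_p}A(\iDelta^p_{\sigma'})$ and $A(Z_p\setminus Z_{p-1})\cong\bigoplus_{\sigma'}A(\iDelta^p_{\sigma'})$, with the latter sum running over the (fewer) $p$-cells lying in $Z$, the map $r$ is the corresponding coordinate projection, and likewise in dimension $p-1$. Because $\sigma$ and $\tau$ are cells of $Z$, this gives the identities $r\circ I^{p-1}_X(\tau)=I^{p-1}_Z(\tau)$, $\;Q^p_X(\sigma)=Q^p_Z(\sigma)\circ r$, and $r\circ\iota_X=\iota_Z\circ r$. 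Combining these with the naturality relation $r_*\circ\partial_X=\partial_Z\circ r_*$ furnished by axiom (LX) applied to the diagram above, I would slide each $r$ through the composition and read off that the middle portion computed over $X$ equals the one computed over $Z$. Together with the locality of the outer maps this yields $d^{\sigma}_{\tau}$ over $X$ equal to $d^{\sigma}_{\tau}$ over $Z$, as claimed.

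The step I expect to require the most care is the verification that restriction to the closed subcomplex $Z$ genuinely produces a \emph{morphism} of the skeletal short exact sequences---that is, that $r$ carries the ideal $A_{p,p-1}$ into $A(Z_p\setminus Z_{p-1})$ and $A_{p-1,p-2}$ into $A(Z_{p-1}\setminus Z_{p-2})$ in a manner compatible with the summand decompositions---since once this is in place everything else follows formally from naturality of $\partial$ and functoriality of $h_*$.
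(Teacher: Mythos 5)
Your proposal is correct and takes essentially the same route as the paper's own proof: the paper likewise applies the restriction (quotient) map $A(X)\to A(Z)$ to every ingredient of the chain defining $d_{\tau}^{\sigma}$, observes that it induces the identity on the outer terms because the cells $\Delta^{p-1}_{\tau}$ and $\Delta^p_{\sigma}$ lie in $Z$, and uses naturality of the long exact sequences to slide the restriction through the middle maps. Your write-up merely makes explicit (via the morphism of skeletal short exact sequences and the coordinate-projection identities) what the paper compresses into one sentence.
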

\begin{proof} Apply the quotient map $q:A(X)\to A(Z)$ to all ingredients of the composition
of maps in (\ref{eq-tau-sigma}) and use naturality of the long exact sequences in $h_*$-theory.
Since the quotient map induces the identity in the first and in the last place of that chain,
and since, by naturality, all other maps are linked by commutative diagrams (note that
all maps in the chain are maps taken from appropriate long exact sequences linked
by factorizations of the  quotient map $A(X)\to A(Z)$), the result follows.
\end{proof}

\begin{proof}[Proof of Proposition \ref{prop-differential}]
Suppose first that $\Delta^{p-1}_{\tau}$ is not a face of $\Delta^p_{\sigma}$.
To see that $d_{\tau}^{\sigma}$ is then the zero map, we actually show that
the chain of maps of (\ref{eq-tau-sigma}) relative to the sub-complex
$Z=\Delta^{p-1}_{\tau}\cup\Delta^p_{\sigma}$ is the zero map.
We then have $A(Z_{p-1})=A(\Delta^{p-1}_{\tau})\oplus A(\Delta^p_{\sigma}\setminus\iDelta^p_{\sigma})$, and the chain of maps in (\ref{eq-tau-sigma}) becomes the composition
$$h_{{{q+1}}}(A(\iDelta^{p-1}_\tau))\stackrel{\iota_*}{\longrightarrow}
h_{{{q+1}}}(A(\Delta^{p-1}_{\tau}))\oplus h_{{{q+1}}}(A(\Delta^p_{\sigma}\setminus\iDelta^p_{\sigma}))
\stackrel{\partial}{\longrightarrow} h_q(A(\iDelta^p_{\sigma})).$$
But the first map takes its image in the first summand of the middle term, which lies 
in the kernel of the second map.

So we can now restrict to the case where $\Delta^p_{\tau}$ coincides with the 
$l$-th face of $\Delta^p_{\sigma}$. By Lemma \ref{lem-tau-sigma}, we may also assume
without loss of generality that $X=\Delta^p_{\sigma}$.
Using once again naturality of long exact sequences in $h_*$-theory, and applying this to 
the inclusion of the ideal $A(\iDelta^{p-1}_{\sigma,l}\cup \iDelta^p_{\sigma})$ into
$A(\Delta^p_{\sigma}\setminus X_{p-2})$ (which is 
the restriction to the complement of  the ${p-2}$-skeleton of $\Delta^p_{\sigma}$)
shows that in this situation  the chain of maps in (\ref{eq-tau-sigma}) reduces to 
the boundary map $\partial_l$ in the long exact sequence
$$\to h_{{{q+1}}}(A(\iDelta^{p-1}_{\sigma,l}\cup \iDelta^p_{\sigma}))
\to h_{{{q+1}}}(A(\iDelta^{p-1}_{\sigma,l}))\stackrel{\partial_l}{\to} h_q(A(\iDelta^p_{\sigma}))\to .$$
Moreover, it follows from  Proposition \ref{prop-affine}, that replacing the orientation 
of $\Delta^{p-1}_{\tau}=\Delta^{p-1}_{\sigma,l}$ given by $\tau$ to that given by $\sigma_l$
results to the factor $\sign(\varphi_{\sigma,\tau}^l)$ in the  canonical oriented isomorphism
$\Phi_{{{p+q}}}^{p-1}:h_{{{p+q}}}(A(\Delta^{p-1}_\tau))\to h_q(A(\iDelta^{p-1}_\tau))$. Taking this into account,
the map $d_{\tau}^{\sigma}$ becomes $\sign(\varphi_{\sigma,\tau}^l)$-times  the 
composition
$$
h_{{{p+q}}}(A(\Delta^{p-1}_{\sigma,l}))
\stackrel{\Phi_{{{p+q}}}^{p-1}}{\longrightarrow}
h_{{{q+1}}}(A(\iDelta^{p-1}_{\sigma,l}))
\stackrel{\partial_l}{\longrightarrow}
h_q(A(\iDelta^p_{\sigma}))
\stackrel{(\Phi_{{{p+q}}}^p)^{-1}}{\longrightarrow}
h_{{{p+q}}}(A(\Delta^p_{\sigma})).
$$
But it follows from Corollary \ref{cor-lface} that this composition coincides with 
$(-1)^l(\res_{\tau}^{\sigma})^{-1}$ (which plays the role of $\res_l$ in that corollary).
Hence we arrived at the equation
$$d_{\tau}^{\sigma}=\sign(\varphi_{\sigma,\tau}^l)(-1)^{{p-l}}(\res_{\tau}^{\sigma})^{-1},$$
which finishes the proof.
\end{proof}

We now recall the definition of the simplicial cohomology 
of a finite dimensional{and locally finite} simplicial  complex $X$ with coefficients  in a group 
bundle $\G=\bigcup\{G_x:x\in X\}$. 
We refer to Section \ref{sec-bundle} for the definition of a group bundle and for
the notion of the group bundle $\mathcal H_*$ associated to an $h_*$-fibration.

If $Y$ is any simply connected subset of $X$,
a section $g\in \Gamma(Y,\G)$ is said to be {\em constant}, if $g$ becomes constant 
in any trivialization of the bundle over $Y$, which is equivalent to saying that 
$$g(y)=\Phi_{x,y}g(x)\quad\text{for all $x,y\in Y$}.$$
We denote by $\Gamma_{\const}(Y,\G)$ the group of constant sections of $Y$.
It is clear that, if $Y$ is simply connected and $Z\subseteq Y$ is any (simply connected)
subset, then the restriction map 
$$\res_Z^Y: \Gamma_{\const}(Y,\G)\to \Gamma_{\const}(Z,\G)$$
is an isomorphism. In particular, $\Gamma_{\const}(Y,\G)$ is isomorphic to $G_x$ 
for every $x\in Y$.

We now define the $p$-cochains for simplicial cohomology on $X$ with coefficient $\G$
as 
\begin{align*}
C^p(X;\G):=
\{C_p\ni \sigma\mapsto f(\sigma)\in \Gamma_{\const}(\Delta^p_{\sigma}; \G) 
%f(\sigma)\neq 0 \;\text{for only finitely many $\sigma\in C_p$}
\},
\end{align*}
i.e., as the set of all  maps which assign a 
$p$-simplex $\sigma\in C_p$ to a constant 
section on $\Delta^p_{\sigma}$. Moreover, we define $C^p_{fin}(X;\G)$ as the subgroup 
of $C^p(X;\G)$ consisting of all finitely supported functions.
The boundary map
$$\partial: C^{p-1}(X;\G)\to C^p(X;\G)$$
is defined by
$$\partial f(\sigma)=
\sum_{l=0}^p (-1)^l(\res_{\sigma_l}^{\sigma})^{-1}(f(\sigma_l)),
$$
where we define $f(\sigma_l)=\sign(\varphi_{\sigma,\tau}^l)f(\tau)$, where $\tau\in C_{p-1}$
is the unique element with $\Delta^{p-1}_{\tau}=\Delta^{p-1}_{{\sigma,l}}$, and 
$\varphi_{\sigma,\tau}^l$ is defined as in (\ref{eq-permute}). 
It restricts to boundary map on $C^p_{fin}(X;\G)$.
We define $H^p(X;\G)$ as the $p$-th cohomology of the chain complex 
$(C^p(X;\G),\partial)$ and $H^p_{fin}(X;\G)$ as the $p$-th cohomology
of $(C^p_{fin}(X;\G),\partial)$. Of course, both cohomology groups coincide on finite complexes.
The computations in Proposition \ref{prop-differential} now immediately give

\begin{thm}[Leray-Serre spectral sequence for $h_*$-fibrations]\label{thm-LS}
{\ }
\\
Let $h_*$ be a homology theory on a good category of $C^*$-algebras
and suppose that $X$ is a finite dimensional $\sigma$-finite and  locally finite
simplicial complex. Suppose that $A=A(X)$
is an $h_*$-fibration with associated group bundle $\mathcal H_*$. 
If $h_*$ is $\sigma$-additive or if 
$X$ is finite, the $E_2$-term in the spectral sequence of Proposition \ref{prop-exact1}
is given by $$E_2^{p,q}=H^p_{fin}(X;\H_{{p+q}}).$$
If $h_*$ is $\sigma$-multiplicative, 
then 
$$E_2^{p,q}=H^p(X;\H_{{p+q}}).$$
\end{thm}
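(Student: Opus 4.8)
The plan is to turn the $E_1$-page of the spectral sequence of Proposition~\ref{prop-exact1} into the simplicial cochain complex of $X$ with coefficients in the group bundle $\H_*$, so that passing to the $E_2$-page is literally passing to simplicial cohomology. My starting point is the reformulation recorded just before Proposition~\ref{prop-differential}: applying the inverse canonical oriented isomorphisms $(\Phi_{p+q}^p)^{-1}$ over each oriented simplex $\sigma\in C_p$ identifies
$$E_1^{p,q}\cong\bigoplus_{\sigma\in C_p}h_{p+q}(A(\Delta^p_\sigma))$$
when $h_*$ is $\sigma$-additive or $X$ is finite, and the analogous direct product $\prod_{\sigma\in C_p}h_{p+q}(A(\Delta^p_\sigma))$ when $h_*$ is $\sigma$-multiplicative. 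The first real step is to identify each summand $h_{p+q}(A(\Delta^p_\sigma))$ with the group $\Gamma_{\const}(\Delta^p_\sigma;\H_{p+q})$ of constant sections of $\H_{p+q}$ over the closed simplex $\Delta^p_\sigma$. Here Lemma~\ref{lem-path} does the work: because $\Delta^p_\sigma$ is simply connected and $A$ restricts to an $h_*$-fibration over it, each evaluation $\ev_{x,*}\colon h_{p+q}(A(\Delta^p_\sigma))\to(\H_{p+q})_x$ is an isomorphism and these evaluations satisfy $\ev_{y,*}=\Phi_{x,y}\circ\ev_{x,*}$. Hence $c\mapsto(x\mapsto\ev_{x,*}(c))$ is a bijection onto the constant sections, and summing (resp.\ taking the product) over $\sigma\in C_p$ gives $E_1^{p,q}\cong C^p_{fin}(X;\H_{p+q})$ (resp.\ $C^p(X;\H_{p+q})$).

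Next I would match the $d_1$-differential $E_1^{p-1,q+1}\to E_1^{p,q}$ with the simplicial coboundary $\partial\colon C^{p-1}(X;\H_{p+q})\to C^p(X;\H_{p+q})$. This is exactly what Proposition~\ref{prop-differential} delivers: its component $d_\tau^\sigma$ vanishes unless $\Delta^{p-1}_\tau$ is a face $\Delta^{p-1}_{\sigma,l}$ of $\Delta^p_\sigma$, in which case it equals $(-1)^{p-l}\sign(\varphi_{\sigma,\tau}^l)(\res_\tau^\sigma)^{-1}$. Read through the constant-section identification, the factor $\sign(\varphi_{\sigma,\tau}^l)$ is precisely the orientation correction built into the definition $f(\sigma_l)=\sign(\varphi_{\sigma,\tau}^l)f(\tau)$, and $(\res_\tau^\sigma)^{-1}$ is the inverse restriction isomorphism appearing in the coboundary. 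Comparing with $\partial f(\sigma)=\sum_{l=0}^p(-1)^l(\res_{\sigma_l}^\sigma)^{-1}(f(\sigma_l))$ shows that $d_1$ and $\partial$ agree face-by-face up to the sign $(-1)^{p-l}=(-1)^p(-1)^l$; that is, $d_1=(-1)^p\partial$ as a map $C^{p-1}\to C^p$.

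I would remove this residual sign by the routine observation that multiplying the cochains in degree $p$ by scalars $\varepsilon_p\in\{\pm1\}$ chosen to satisfy $\varepsilon_p(-1)^p=\varepsilon_{p-1}$ is an isomorphism of cochain complexes from $(C^\bullet,d_1)$ to $(C^\bullet,\partial)$. Such $\varepsilon_p$ exist (set $\varepsilon_0=1$ and solve the recursion), and the rescaling preserves finite support, so it restricts to the finitely supported subcomplex as well. Taking cohomology in the $p$-direction therefore yields $E_2^{p,q}=H^p_{fin}(X;\H_{p+q})$ in the $\sigma$-additive (or finite) case and $E_2^{p,q}=H^p(X;\H_{p+q})$ in the $\sigma$-multiplicative case, which is the assertion.

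The hard part is not any isolated deep step, since all the analytic and homological input has already been concentrated in Lemmas~\ref{lem-contact}--\ref{lem-orient2}, Proposition~\ref{prop-affine}, Corollary~\ref{cor-lface} and Proposition~\ref{prop-differential}. Rather, the delicate part will be the orientation and sign bookkeeping: verifying that the orientation $\Delta^{p-1}_\tau$ carries as an abstract $(p-1)$-simplex and the orientation it acquires as the $l$-th face of $\sigma$ are reconciled by exactly $\sign(\varphi_{\sigma,\tau}^l)$, and confirming that the leftover $(-1)^p$ is genuinely a degree-only sign that can be absorbed without affecting cohomology. Making these conventions line up so that $d_1$ becomes an isomorphic copy of the simplicial coboundary is where the care is needed.
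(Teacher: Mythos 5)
Your proposal is correct and follows essentially the same route as the paper: the paper's own proof of Theorem \ref{thm-LS} consists precisely of the identification $E_1^{p,q}\cong\bigoplus_{\sigma\in C_p}h_{p+q}(A(\Delta^p_\sigma))$ (resp.\ the product in the $\sigma$-multiplicative case), the constant-section description of each summand via Lemma \ref{lem-path}, and the face-by-face computation of $d_1$ in Proposition \ref{prop-differential}, after which the paper declares the result immediate. The only detail you add is the explicit treatment of the residual degree-only sign $(-1)^p$, which the paper glosses over; your rescaling argument is valid, though one can dispatch it even faster by noting that multiplying the differential by a unit in each degree changes neither kernels nor images, so the $E_2$-groups coincide on the nose.
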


\begin{remark} 
{\bf (1)} As mentioned earlier, the main example we have in mind for the above 
theorem is the case where $h_*$ is the $K$-theory functor. But the result applies 
also to other interesting functors like the functor $\KK(B,\cdot)$  for a fixed 
C*-algebra $B$. In general, these functors are only finitely additive, so 
we should restrict to finite simplicial complexes in these situations.

{\bf (2)}  We should note that the cohomology groups $H^p(X;\G)$ 
we defined above coincide with the usual singular cohomology with local coefficients
as defined in many standard text books (e.g., see \cite{DK}), while 
the groups $H^p_{fin}(X;\G)$ are known as
the simplicial cohomology with local coefficients with finite supports. 
\end{remark}

Although we don't want to  go through all the details for the proof
of the Leray-Serre spectral sequence for cohomology theories on 
C*-algebras (the steps are similar as for homology theory with all 
arrows reversed)  we want at least give a proper statement of the result.
As mentioned earlier, $K$-homology serves as a main example of
such theory, but other examples are given by the functors $\KK(\cdot, B)$
for a fixed C*-algebra $B$.

Recall that the simplicial homology $H_p(X;\G)$ with coefficient in a group bundle 
$\mathcal G$ is defined as the homology of the chain complex 
$(C_p(X;\G), d)$, where 
$$
C_p(X;\G):=
\{\sum_{\sigma\in C_p} a_{\sigma}\sigma:  a_{\sigma}\in \Gamma_{\const}(\Delta^p_{\sigma}; \G)
\},
$$
with boundary map $d: C_p(X;\G)\to C_{p-1}(X;\G)$ given by
$$ d(a_{\sigma}\sigma)=\sum_{l=0}^p(-1)^l\sign(\varphi_{\sigma,\tau}^l)\res_{\sigma_i}^{\sigma} (a_{\sigma}) \tau_l
$$
where for each $0\leq l\leq p$, $\tau_l$ is the unique element in $C_{p-1}$ with image
$\Delta^{p-1}_{\sigma, l}$ and $\sign(\varphi_{\sigma,\tau}^l)$ is as before.
Again, if we restrict to finite sums, we obtain the theory $H_p^{fin}(X;\G)$ with finite
supports. The Leray-Serre theorem then reads as  follows

\begin{thm}[Leray-Serre spectral sequence for $h^*$-fibrations]\label{thm-LS1}
{\ }
\\
Let $h^*$ be a cohomology theory on a good category of $C^*$-algebras
and let  $X$ be as in Theorem \ref{thm-LS}. Suppose that $A=A(X)$
is an $h^*$-fibration with associated group bundle $\mathcal H^*$. 
If $h^*$ is $\sigma$-additive or if 
$X$ is finite, the $E^2$-term in the spectral sequence of Proposition \ref{prop-exact1}
is given by $$E^2_{p,q}=H_p^{fin}(X;\H_{{p+q}}).$$
If $h^*$ is $\sigma$-multiplicative, 
then 
$$E^2_{p,q}=H_p(X;\H_{{p+q}}).$$
\end{thm}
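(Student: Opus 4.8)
The plan is to run the entire argument behind Theorem~\ref{thm-LS} with every arrow reversed, so I would organize the proof around the dualized versions of Lemmas~\ref{lem-path}--\ref{lem-orient2}, Proposition~\ref{prop-affine}, Corollary~\ref{cor-lface} and, above all, Proposition~\ref{prop-differential}. First I would take as given the cohomological exact couple already assembled in Proposition~\ref{prop-exact1}, whose $E^1$-page is
\[
E^1_{p,q}=\bigoplus_{\sigma\in C_p}h^q(A(\iDelta^p_{\sigma}))
\]
and whose $d^1$-differential, by the contravariant half of Remark~\ref{rem-E2}, is the composite $d^{1,q}_{(p,i),(p-1,j)}$ built from a quotient map, a connecting map, an inclusion-induced $\iota^*$ and a restriction. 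The decisive structural point is that, since $h^*$ is contravariant, this differential runs from $E^1_{p,q}$ to $E^1_{p-1,q+1}$; it \emph{lowers} the simplicial degree, so the coefficient complex it produces will compute simplicial \emph{homology} rather than cohomology, which is exactly why the conclusion involves $H_p$ rather than $H^p$.

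Next I would dualize the geometric input. Each of Lemma~\ref{lem-path}, Lemma~\ref{lem-contact} and Definition and Remark~\ref{defrem-canonical} holds for an $h^*$-fibration with the evaluation isomorphisms and connecting maps reversed: evaluation at a vertex gives an isomorphism $h^*(A_x)\to h^*(A(\Delta^p))$, the vanishing $h^*(A(W))=0$ for a simplex with faces removed follows from the reversed long exact sequences exactly as before, and iterating the resulting connecting isomorphisms yields a \emph{canonical oriented isomorphism}
\[
\Phi^p_q\colon h^q(A(\iDelta^p))\xrightarrow{\ \cong\ }h^{q+p}(A(\Delta^p)).
\]
The orientation--dependence calculations, namely Lemma~\ref{lem-orient1}, Lemma~\ref{lem-orient2}, Proposition~\ref{prop-affine} and Corollary~\ref{cor-lface}, transcribe verbatim, since their proofs invoke only exactness, naturality of the connecting maps and the Five Lemma, all of which are symmetric under reversing arrows; they produce the same sign factors $\sign(\varphi)$ and $(-1)^{p-l}$.

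With these isomorphisms in hand I would apply $\Phi^p_q$ (and its analogue for the $(p-1)$-faces) to identify
\[
E^1_{p,q}\cong\bigoplus_{\sigma\in C_p}h^{p+q}(A(\Delta^p_{\sigma})),\qquad
E^1_{p-1,q+1}\cong\bigoplus_{\tau\in C_{p-1}}h^{p+q}(A(\Delta^{p-1}_{\tau})),
\]
and then run the dual of Proposition~\ref{prop-differential}: the component $d^{\sigma}_{\tau}$ of the differential vanishes unless $\Delta^{p-1}_{\tau}$ is a face of $\Delta^p_{\sigma}$, in which case it equals $(-1)^{p-l}\sign(\varphi^l_{\sigma,\tau})\res^{\sigma}_{\tau}$. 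Identifying $h^{p+q}(A(\Delta^p_{\sigma}))$ with the constant sections $\Gamma_{\const}(\Delta^p_{\sigma};\H_{p+q})$ via evaluation at a vertex, this is precisely the simplicial boundary $d\colon C_p(X;\H_{p+q})\to C_{p-1}(X;\H_{p+q})$ with local coefficients, so $E^2_{p,q}$ is the homology of that complex.

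The main obstacle is the bookkeeping of the finiteness conditions, which I would treat last, since it is the one place where the cohomological statement genuinely differs from the homological one and must be verified rather than merely transcribed. For a contravariant $h^*$ the cohomology of the infinite direct sum $A_{p,p-1}=\bigoplus_{\sigma\in C_p}A(\iDelta^p_{\sigma})$ is a \emph{product}, $h^q(A_{p,p-1})=\prod_{\sigma}h^q(A(\iDelta^p_{\sigma}))$, precisely when $h^*$ is $\sigma$-multiplicative; after applying the canonical isomorphisms this product is the full chain group $C_p(X;\H_{p+q})$ of possibly infinitely supported chains, and one lands in $H_p(X;\H_{p+q})$. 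When instead $h^*$ is $\sigma$-additive, or when $X$ is finite so that sum and product agree, one gets only the direct sum, i.e.\ the finitely supported chains $C_p^{fin}(X;\H_{p+q})$, and one lands in $H_p^{fin}(X;\H_{p+q})$. Getting this sum--versus--product dichotomy right at the $E^1$-stage, and checking that the reversed differential restricts to the finite-support subcomplex, is exactly the step I expect to require the most care.
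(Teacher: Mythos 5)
Your proposal is correct and is essentially the paper's own route: the paper gives no separate proof of Theorem \ref{thm-LS1}, saying only that one repeats the proof of Theorem \ref{thm-LS} ``with all arrows reversed,'' which is precisely what you carry out --- the dualized canonical isomorphisms $h^q(A(\iDelta^p))\cong h^{q+p}(A(\Delta^p))$, the same sign factors $(-1)^{p-l}\sign(\varphi^l_{\sigma,\tau})$, and the sum-versus-product dichotomy that decides between $H_p^{fin}(X;\H_{p+q})$ and $H_p(X;\H_{p+q})$. Your key structural observation, that the contravariant differential $E^1_{p,q}\to E^1_{p-1,q+1}$ lowers the simplicial degree so that the $E^2$-page computes simplicial \emph{homology} with local coefficients, is exactly the point on which the dualization turns, and you have it right.
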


We want to close this section with a discussion how the spectral sequences 
considered here give new invariants for $\RKK$-equivalence of C*-algebra bundles.
%As already observed in \cite{ENO}, the $K$-theory group bundle $\mathcal K_*(A)$ of 
%a C*-algebra bundle $A(X)$ is an invariant for $\RKK$-equivalence,
%and we used this invariant to study the question of $\RKK$-equivalence of 
%principal non-commutative torus bundles. In this section we want to extend these
%studies by using the Leray-Serre spectral sequence as a new invariant for 
%$\RKK$-equivalence of given \green{$K_*$-fibration}s $A(X)$. 
The following lemma might be well known to the experts,
but since we rely heavily on it, we give the argument here.
For notation, we let $\CKK$ denote the category whose objects are 
separable C*-algebras and the morphisms between two objects $A$ and $B$ are
the elements in $\KK(A,B)$. Recall that for any pair of bundles $A(X)$ and $B(X)$
and any continuous{inclusion} map $f: Y{\hookrightarrow}  X$ there exists a canonical pull-back map
$$f^*: \RKK(X; A(X), B(X))\to \RKK(Y; f^*A(Y), f^*B(Y)).$$
In particular, if $Y\subseteq X$, then we obtain restrictions $\frak x\mapsto \res_Y^X(\frak x)$
from $\RKK(X, A(X), B(X))\to \RKK(Y; A(Y), B(Y))$ given via the inclusion of $Y$ into $X$
(see \cite[Proposition 2.2]{Kas2}). Recall also that 
a short exact extension $0\to J\to A\stackrel{q}{\to} A/J\to 0$ of C*-algebras is \emph{semi-split}
if there exists a completely positive  section $s: A/J\to A$. Note that this is always true
if $A/J$ is nuclear (which follows if $A$ is nuclear).

\begin{lem}\label{lem-RKK}
Suppose that $A(X)$ and $B(X)$ are C*-algebra bundles over $X$ and 
suppose that $U\subset X$ is open. Let $\frak x\in \RKK(X;A,B)$
and let  $i_A: A(U)\to A(X)$ and $q_A:A(X)\to A(X\setminus U)$ denote the inclusion 
and quotient maps (and similarly for $B(X)$). Then the diagram
$$\begin{CD}
A(U) @>i_A>> A(X) @>q_A>>  A(X\setminus U)\\
@V\res_U^X(\frak x) VV   @V\frak x VV   @VV\res_{X\setminus U}^X(\frak x)V\\
B(U) @>i_B>> B(X) @>q_B>>  B(X\setminus U)
\end{CD}
$$
commutes in the category $\CKK$. Moreover, if both extensions in the above diagram
are semi-split (we do not require that the c.p. sections are $C_0(X)$-linear), then the diagram
$$\begin{CD}
SA(X\setminus U) @>\partial_A >>   A(U)\\
@V\res_{X\setminus U}^X(\frak x) VV   @VV\res_U^X(\frak x) V   \\
SB(X\setminus U) @>\partial_B >> B(U)
\end{CD}
$$
also commutes in $\CKK$, where $SA$ and $SB$ denote the suspensions of $A$ and $B$,
respectively.
\end{lem}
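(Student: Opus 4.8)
The plan is to fix a single Kasparov cycle $(\mathcal{E},\phi,F)$ representing $\frak x\in\RKK(X;A,B)$ and to compute all four composite $\KK$-classes directly at the level of Kasparov modules, exploiting that the $C_0(X)$-action on $\mathcal{E}$ is central and that $\phi$ is $C_0(X)$-linear. Writing $Y:=X\setminus U$, recall that $A(Y)=A\otimes_{C_0(X)}C(Y)$ while $A(U)=\overline{C_0(U)\cdot A}$, and similarly for $B$ and for the module $\mathcal{E}$. I would show both squares of the first diagram to commute on the nose, i.e.\ by equality of cycles and not merely up to homotopy; this strict compatibility is what will later drive the boundary statement.

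For the right-hand square I would note that $\frak x\otimes_{B(X)}[q_B]$ is represented by $(\mathcal{E}\otimes_{q_B}B(Y),\phi\otimes1,F\otimes1)$, and that $\mathcal{E}\otimes_{q_B}B(Y)=\mathcal{E}\otimes_{C_0(X)}C(Y)$. Since $C_0(U)$ maps to $0$ in $C(Y)$ and acts centrally, the ideal $A(U)=C_0(U)A$ acts as zero on this quotient module, so the left action of $A(X)$ factors through $q_A\colon A(X)\to A(Y)$; the resulting $(A(Y),B(Y))$-cycle is precisely the one defining $\res_{X\setminus U}^X(\frak x)$. Hence $\frak x\otimes_{B(X)}[q_B]$ and $[q_A]\otimes_{A(Y)}\res_{X\setminus U}^X(\frak x)$ are literally the same cycle, which proves commutativity of the right square in $\CKK$.

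For the left-hand square the crucial point is that the essential submodule $\mathcal{E}_U:=\overline{C_0(U)\mathcal{E}}$ is complemented: the strict limit of $\Phi(u_\lambda)$ along an approximate unit $(u_\lambda)$ of $C_0(U)$ is the orthogonal projection $P_U$ onto $\mathcal{E}_U$, with complement $\mathcal{E}_U^{\perp}=\{\xi:C_0(U)\xi=0\}$. Because $A(U)$ is supported on $U$, the cycle $\frak x\circ[i_A]=(\mathcal{E},\phi\circ i_A,F)$ has left action that is block-diagonal for the decomposition of $\mathcal{E}$ into $\mathcal{E}_U$ and $\mathcal{E}_U^{\perp}$, acting by $\phi$ on $\mathcal{E}_U$ and by $0$ on $\mathcal{E}_U^{\perp}$. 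Scaling the off-diagonal blocks of $F$ to zero then gives an operator homotopy (a routine check shows the relevant commutators and defects stay in $\mathcal{K}(\mathcal{E})$, being blocks of compact operators multiplied by the block-diagonal action), after which the cycle splits as a direct sum of $(\mathcal{E}_U,\phi,P_UFP_U)$ and a degenerate cycle $(\mathcal{E}_U^{\perp},0,\,\cdot\,)$ with zero left action. Identifying $\mathcal{E}_U\otimes_{B(U)}B(X)\cong\mathcal{E}_U$ as Hilbert $B(X)$-modules shows the surviving summand is exactly $[i_B]\circ\res_U^X(\frak x)$, which yields the left square.

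The main obstacle is the second diagram, where the boundary maps live in $\CKK$ while the vertical arrows are $\KK$-classes rather than $*$-homomorphisms, so naturality of the connecting map is not formal (it does not follow from the abstract triangulated structure of $\CKK$, since completing two given vertical maps to a morphism of triangles need not reproduce the prescribed third map). I would realize $\partial_A$ and $\partial_B$ through the mapping cones of the $C_0(X)$-linear quotient maps: for a semi-split extension the canonical map $A(U)\to C_{q_A}$ into the cone is a $\KK$-equivalence by Cuntz--Skandalis, and $\partial_A$ is the composite $SA(Y)\to C_{q_A}\xrightarrow{\sim}A(U)$. The strict, cycle-level commutativity of the right square just established, together with the $C_0(X)$-linearity of $q_A,q_B$, lets me glue $(\mathcal{E},\phi,F)$ and its restriction to $Y$ into an honest Kasparov cycle $C_{q_A}\rightsquigarrow C_{q_B}$ that is compatible with the structure maps $SA(Y)\to C_{q_A}$ and $A(U)\to C_{q_A}$; the boundary square is then the image under the forgetful functor of a commuting square of cone morphisms. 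The delicate step I would write out most carefully is verifying that the equal-cycle condition genuinely produces this cone morphism, and that the (not necessarily $C_0(X)$-linear) completely positive sections suffice for the Cuntz--Skandalis equivalence; an equivalent route, should the cone gluing prove awkward, is to represent both boundary composites by explicit cycles built from $(\mathcal{E},\phi,F)$ and the c.p.\ sections and to compare them directly.
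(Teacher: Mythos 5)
Your treatment of the right-hand square is correct and is exactly the paper's argument, and your strategy for the boundary diagram (realize $\partial_A$ via the mapping cone, where $A(U)\to C_{q_A}$ is a $\KK$-equivalence for semi-split extensions, and produce a cone-level class compatible with the structure maps by gluing $(\mathcal E,\phi,F)$ with its restriction to $X\setminus U$) is also precisely the paper's route: the paper identifies $C_{q_A}$ with $A(Z)$ for $Z=\big((0,1]\times (X\setminus U)\big)\cup\big(\{1\}\times U\big)\subseteq (0,1]\times X$, and your ``glued cycle'' is just the pullback $p^*(\frak x)$ along $p:Z\to X$. However, your proof of the \emph{left-hand} square contains a genuine error. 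The claim that $\mathcal E_U:=\overline{C_0(U)\mathcal E}$ is orthogonally complemented in $\mathcal E$, with projection $P_U$ obtained as the strict limit of $\Phi(u_\lambda)$ along an approximate unit of $C_0(U)$, is false in general: ideal submodules of Hilbert modules are not complemented, and $\Phi(u_\lambda)$ does not converge strictly. Take $A=B=C_0(X)$, $\frak x$ the identity class represented by $(\mathcal E,\phi,F)=(C_0(X),\mathrm{mult},0)$, and $U$ open with $\partial U\neq\emptyset$. Then $\mathcal E_U=C_0(U)$, $\mathcal E_U^{\perp}=C_0(X\setminus \overline U)$, and $\mathcal E_U\oplus \mathcal E_U^{\perp}$ contains only functions vanishing on $\partial U$, so it is a proper submodule of $\mathcal E$; equivalently, the would-be projection is multiplication by $\chi_U$, which is not an adjointable operator on $C_0(X)$. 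Without $P_U$ there is no block decomposition, no off-diagonal scaling homotopy, and no degenerate summand to split off, so this part of your argument collapses.

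The repair is the paper's device, which avoids any decomposition of $\mathcal E$: represent $[i_A]\otimes_{A(X)}\frak x$ not by the pullback cycle $(\mathcal E,\phi\circ i_A,F)$ but by the interior tensor product $\big(A(U)\otimes_{A(X)}\mathcal E,\,1\otimes F\big)$. Since $A(U)=\overline{C_0(U)\cdot A(X)}$ and the $C_0(X)$-action is central and the $A(X)$-action nondegenerate, there is a canonical isomorphism of cycles
$$\big(A(U)\otimes_{A(X)}\mathcal E,\,1\otimes F\big)\;\cong\;\big(C_0(U)\otimes_{C_0(X)}\mathcal E,\,1\otimes F\big),$$
and the right-hand cycle is literally a representative of $\res_U^X(\frak x)\otimes_{B(U)}[i_B]$. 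Note also that the gap propagates into your boundary argument: the compatibility of the glued cone cycle with $e_A:A(U)\to C_{q_A}$ and $e_B:B(U)\to C_{q_B}$, which you need to convert the cone diagram into the $\partial$-square, is itself an instance of the left square (applied to the open subset $\{1\}\times U$ of $Z$), so it cannot be quoted until the left square is fixed. Once it is, your argument coincides with the paper's proof.
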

\begin{proof} Let $(E(X),T)$ be a Kasparov cycle representing $\frak x$.
We may assume that $A(X)$ acts nondegenerately on $E(X)$.
If $f:Y\to X$ is a continuous map, then $f^*(\frak x)$ is represented by the cycle
$$(C_0(Y)\otimes_{C_0(X)}E(X), 1\otimes T)\cong (E(X)\otimes_{C_0(X)} C_0(Y), T\otimes 1)$$
depending on whether we want tensor $C_0(Y)$ from the left or from the right.
The first square of the first diagram then follows from an obvious isomorphism of 
$\KK(A(U), B(X))$-cycles
$$(A(U)\otimes_{A(X)}E(X), 1\otimes T)\cong (C_0(U)\otimes_{C_0(X)} E(X), 1\otimes T).$$
The left cycle represents $[i_A]\otimes_{A(X)}\frak x$
and the right cycle represents $\res_U^X(\frak x)\otimes_{B(U)}[i_B]$.

Similarly, the second square of the first diagram follows from the observation that both products 
$[q_A]\otimes_{A(X\setminus U)}\res_{X\setminus U}^X(\frak x)$ and 
$\frak x\otimes_{B(X)}[q_B]$ are represented by the module
$(E(X)\otimes_{C_0(X)}C_0(X\setminus U), T\otimes 1)$ with the canonical module actions.

So let us now assume that both extensions are semi-split. Then the 
boundary map $\partial_A$  in the second diagram is given 
by Kasparov product with an element
$[\partial_A]\in \KK_1(A(X\setminus U), A(U))$ 
constructed as follows:
let 
$$Z=\big((0,1]\times (X\setminus U)\big) \cup\big( \{1\}\times U\big)\subseteq (0,1]\times X.$$
Let $p:Z\to X$ denote the canonical projection and write $A(Z)$ for the pull-back $p^*A(Z)$.
Note that, as an algebra, $A(Z)$ is just the mapping cone $C_{q_A}$
{of the homomoprhism  $q_A$}.
Let  $e_A: A(U)\to A(Z)$ be the inclusion map given by identifying $U$ with the open set
$\{1\}\times U\subseteq Z$. It is shown in \cite[Theorem 19.5.5]{Black} that 
$e_A$ is a $\KK$-equivalence.
Let $u_A$ denote its inverse and let 
$j_A: SA(X\setminus U)=A\big((0,1)\times (X\setminus U)\big)\to A(Z)$ denote the inclusion.
Then it is shown in \cite[Theorem 19.5.7]{Black} that $[\partial_A] =[j_A]\otimes_{A(Z)} u_A$.
The same construction applies to $B(X)$. Let $p^*(\frak x)$ be the pull-back of $\frak x$
in $\RKK(Z; A(Z), B(Z))$. Then the commutativity of the second diagram follows from the
commutativity of 
$$
\begin{CD}
A\big((0,1)\times (X\setminus U)\big) @>j_A>> A(Z)  @<e_A<< A(U)\\
@V\res_{(0,1)\times (X\setminus U)}^Z(p^*(\frak x)) VV  
@V p^*(\frak x)VV  @VV \res_{U}^Z(p^*(\frak x))V\\
B\big((0,1)\times (X\setminus U)\big) @>j_B>> B(Z)  @<e_B<< B(U)
\end{CD}
$$
which is a consequence of the commutativity of the first diagram in the lemma.
\end{proof}

We say that a (co-)homology theory on $\Csep$ is \emph{$\KK$-representable}
if there exists a C*-algebra $B$ such that the (co-)homology theory is given 
by $A\mapsto \KK_*(B,A)$ (resp. $A\mapsto \KK(A,B)$). Of course $K$-theory 
and $K$-homology are important examples, but also $K$-theory with coefficients 
in $\ZZ/n\ZZ$ is an example of such theory. Note that every $\KK$-fibration is automatically
an $h_*$-fibration (resp. $h^*$-fibration) if $h_*$ (resp $h^*$) is $\KK$-representable.

\begin{cor}\label{cor-spectral}
Let $h_*$ be a $\KK$-representable homology theory on $\Csep$.
Assume that $A(X)$ and $B(X)$ are $h_*$-fibrations.
Then any class $\frak x\in \RKK(X;A,B)$ induces a morphism 
between the associated group bundles $\mathcal H_*(A)$ and $\mathcal H_*(B)$.
If $X$ is a CW-complex and $A(X)$ and $B(X)$ are nuclear, 
then $X$ induces a morphism between the associated exact couples
for the Leray-Serre spectral sequence. 

In particular, if $\frak x$ is a $\RKK$-equivalence, then it induces an isomorphism
between the Leray-Serre spectral sequences for $h_*(A)$ and $h_*(B)$.
A similar statement holds for $\KK$-representable cohomology theories. 
\end{cor}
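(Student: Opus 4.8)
The plan is to deduce the corollary from the naturality already packaged in Lemma~\ref{lem-RKK}, applied to the various restrictions and pullbacks of the class $\frak x$. Write $h_*(\cdot)=\KK_*(D,\cdot)$ for the fixed separable C*-algebra $D$ representing the theory. Then $h_*$ is functorial on the category $\CKK$, so that for each $x\in X$ the evaluation $\frak x(x):=\res_{\{x\}}^X(\frak x)\in\KK(A_x,B_x)$ induces, by right Kasparov product, a homomorphism $\phi_x:h_*(A_x)\to h_*(B_x)$.

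First I would check that $\{\phi_x\}_{x\in X}$ is a morphism of group bundles $\mathcal H_*(A)\to\mathcal H_*(B)$. Given a path $\gamma:[0,1]\to X$, pull $\frak x$ back to $\gamma^*(\frak x)\in\RKK([0,1];\gamma^*A,\gamma^*B)$. Applying the first diagram of Lemma~\ref{lem-RKK} to the evaluation at each endpoint (a quotient onto a closed fibre) shows that the evaluation isomorphisms $\eps_{i,*}$ intertwine $\gamma^*(\frak x)_*$ with $\phi_{\gamma(i)}$, $i=0,1$. Since the structure maps $c_\gamma$ of both bundles are, by \eqref{eq-cgamma}, composites of these two evaluation isomorphisms (one inverted), this forces $\phi$ to commute with each $c_\gamma$, which is the required compatibility.

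Next, for the exact couples of Proposition~\ref{prop-exact1}, recall that they are assembled from the extensions $0\to A_{p,p-1}\to A_p\to A_{p-1}\to 0$, with $A_p=A(X_p)$ and $A_{p,p-1}=A(X_p\setminus X_{p-1})$. Restricting $\frak x$ along the inclusions of skeleta yields classes $\res_{X_p}^X(\frak x)\in\RKK(X_p;A_p,B_p)$ and, by a further restriction to the open subset $X_p\setminus X_{p-1}$, the classes inducing the maps on the $\mathcal E$-terms. I would then apply Lemma~\ref{lem-RKK} with $U=X_p\setminus X_{p-1}$: the two squares of its first diagram give commutativity of the $\frak x$-maps with the inclusions $\iota_*$ and the quotients $q_*$, while the second diagram gives commutativity with the boundary maps $\partial$. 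The CW-hypothesis guarantees that each $X_p$ is closed with $X_p\setminus X_{p-1}$ open in it, and nuclearity of $A(X)$ and $B(X)$ guarantees that every quotient is nuclear, so the extensions are semi-split and the boundary-map square is available. Functoriality of restriction ($\res_{X_{p-1}}^X=\res_{X_{p-1}}^{X_p}\circ\res_{X_p}^X$) makes these maps compatible across all levels, so the restrictions of $\frak x$ constitute a morphism of exact couples, which by the functoriality of the exact-couple construction (see \cite{McC}) passes to a morphism of spectral sequences.

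Finally, if $\frak x$ is an $\RKK$-equivalence then each restriction $\res_Y^X(\frak x)$ is invertible (its inverse restricts to an inverse), so every induced map on the $E_1$-page is an isomorphism; a morphism of spectral sequences that is an isomorphism on $E_1$ is automatically an isomorphism on every later page. The step I expect to require the most care is the identification of this morphism on $E_2=H^*(X;\mathcal H_*)$ with the map induced by the group-bundle morphism $\phi$: this amounts to checking that the restrictions of $\frak x$ commute with the canonical oriented isomorphisms $\Phi_p^q$ of \ref{defrem-canonical}, and since those isomorphisms are themselves built from evaluation and boundary maps, this is once more a direct consequence of Lemma~\ref{lem-RKK}. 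All the genuine analytic input has thus been absorbed into Lemma~\ref{lem-RKK}; what remains is the bookkeeping of verifying compatibility with all the structure maps simultaneously and ensuring the semi-split hypothesis is invoked exactly where the boundary square is used. The cohomology case is identical with all arrows reversed, yielding a morphism $\mathcal H^*(B)\to\mathcal H^*(A)$.
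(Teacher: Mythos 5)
Your proposal is correct and follows exactly the route the paper intends: the corollary is stated as a direct consequence of Lemma~\ref{lem-RKK}, applied to the restrictions of $\frak x$ to the skeletal extensions $0\to A_{p,p-1}\to A_p\to A_{p-1}\to 0$ (with nuclearity supplying the semi-split hypothesis for the boundary-map square) and to evaluations at fibres for the group-bundle statement, together with functoriality of $h_*=\KK_*(D,\cdot)$ on $\CKK$. Your spelled-out bookkeeping, including the observation that an exact-couple morphism which is an isomorphism on the $E_1$-page is an isomorphism of spectral sequences, is precisely the implicit argument of the paper.
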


\begin{remark}\label{rem-RKK}
Note that in the case of $K$-theory one can omit the nuclearity assumption on $A(X)$ and $B(X)$
in the above lemma. The reason is that for $U\subseteq X$ open, and $Z$
as in the proof of Lemma \ref{lem-RKK}, we always have an isomorphism 
$ K_*(A(U))\stackrel{e_*}{\cong} K_*(A(Z))$
such that the boundary map $\partial_A: K_{*+1}(A(X\setminus U))\to K_*(A(U))$
is given via the composition 
$\partial_A=(e_*)^{-1}\circ j_{A,*}$, with $e_A$ and $j_A$ as in the proof of the lemma.
Thus the same argument as in the lemma shows that for each such $U$ the transformation 
given by Kasparov product with the appropriate restrictions of $\frak x$ gives a transformation
between the $K$-theory long exact sequences for $A$ and $B$ corresponding to $U$.
This is all we need to obtain a well-defined  morphism between the exact couples.
\end{remark}

%

%More important for us is the following immediate consequence of Lemma 
%\ref{lem-RKK}:

%\begin{cor}\label{cor-spectral}
%Suppose that $X$ is a  CW-complex and let 
%$A(X)$ and $B(X)$ be nuclear C*-algebra bundles over $X$. 
%Let $h_*$ (resp $h^*$) be any homology (resp. cohomology) theory
%which factors through $\KK$-theory (i.e., they are given by 
%by functors of the form $A\mapsto \KK_*(A,B)$ or $A\mapsto \KK_*(B,A)$ for a fixed 
%C*-algebra $B$). Then
%If $\frak x\in \RKK(X; A,B)$ is invertible, then it induces an isomorphism
%of exact couples for $A(X)$ and $B(X)$ for $h_*$ (resp. $h^*$)
%corresponding to the given cell decomposition of $X$ as considered 
%in the previous section.
%In particular, the resulting spectral sequences are isomorphic, too.
%\end{cor}

\section{Applications to non-commutative torus bundles}

Recall from \cite{ENO} that  a non-commutative {principal} 
$\TT^n$-bundle (or NCP $\TT^n$-bundle for short) is defined as 
a C*-algebra bundle $A(X)$ equipped with a fibrewise action of 
$\TT^n$ such that $A(X)\rtimes \TT^n$ is isomorphic to $C_0(X,\K)$.
By Takesaki-Takai duality, every such bundle can be realized up to stabilization 
by a crossed product $C_0(X,\K)\rtimes \ZZ^n$ for some fiberwise action of 
$\ZZ^n$ on $C_0(X,\K)$. Using results from \cite{EW1,EW2} we showed
in \cite[\S 2]{ENO} that the $\TT^n$-equivariant stable isomorphism classes 
of  NCP $\TT^n$-bundles over a given space $X$ can be classified by 
the pairs $([Y], f)$, where $[Y]$ denotes the isomorphism class of a classical
{principal} $\TT^n$-bundle $Y\stackrel{q}{\to} X$ 
and $f: X\to \TT^{\frac{n(n-1)}{2}}$ is a continuous map. 
The NCP $\TT^n$-bundle corresponding to the pair $([Y], f)$ is then given by
$$Y*\big(f^*(C^*(H_n))\big)=\big(C_0(Y)\otimes_{C_0(X)} f^*(C^*(H_n))\big)^{\TT^n}.$$
Let's recall the ingredients of this construction:
$C^*(H_n)$ is the group C*-algebra of the group
$$H_n=\lk g_1,\dots,g_n,  f_{ij}: 1\leq i<j\leq n\rk$$
with relations $g_ig_j=f_{ij}g_jg_i$ and $f_{ij}$ central for all $1\leq i<j\leq n$.
This group has center $Z_n=\lk f_{ij}: 1\leq i<j\leq n\rk \cong \ZZ^{\frac{n(n-1)}{2}}$
and, therefore, $C^*(H_n)$ is a continuous C*-algebra bundle over 
$\TT^{\frac{n(n-1)}{2}}\cong \widehat{Z_n}$ via the inclusion 
$$\Phi: C(\TT^{\frac{n(n-1)}{2}})\cong C^*(Z_n)\to Z(C^*(H_n)).$$
Moreover, if we  equip $C^*(H_n)$ with the dual action of $\TT^n\cong \widehat{H_n/Z_n}$ it
becomes an NCP $\TT^n$-bundle over
$\TT^{\frac{n(n-1)}{2}}$. We shall denote by $U_1,\dots,U_n$ the
unitaries of $C^*(H_n)$ corresponding to $g_1,\dots,g_n$, respectively, and by
$W_{i,j}$ the unitaries corresponding to $f_{i,j}$ for $1\leq i<j\leq n$.
If $f:X\to \TT^{\frac{n(n-1)}{2}}$ is a continuous map, then  the pull-back $f^*(C^*(H_n))$
becomes an NCP $\TT^n$-bundle over $X$. By taking the $C_0(X)$-balanced tensor product
of $f^*(C^*(H_n))$ with $C_0(Y)$ (with $C_0(X)$-action on $C_0(Y)$ induced by $q:Y\to X$
in the obvious way), and then taking the algebra of fixed points with respect to the 
diagonal action {(with action by the inverse automorphism on one factor)} provides the NCP $\TT^n$-bundle $Y*\big(f^*(C^*(H_n))\big)$.
The $\TT^n$-action is induced by the given $\TT^n$-action on $Y$.

In \cite{ENO} we studied the topological nature of the C*-algebra bundles $A(X)$ 
after ``forgetting'' the underlying $\TT^n$-actions. In particular, we were interested 
in the question under what conditions two such bundles are $K$-theoretically equivalent
fibrations, i.e., when are two such bundles $\RKK$-equivalent.
We arrived at the following result:

\begin{thm}[{\cite[Theorem 7.2]{ENO}}]\label{thm-ENO}
Let $A(X)$ be a NCP $\TT^n$-bundle over the path connected space $X$ and let $f:X\to \TT^{\frac{n(n-1)}{2}}$ be the continuous map associated to $A(X)$ as above. Then the 
following are equivalent:
\begin{enumerate}
\item $f$ is homotopic to a constant map.
\item The $K$-theory bundle $\mathcal K_*(A(X))$ is trivial.
\item $A(X)$ is $\RKK$-equivalent to $C_0(Y)$ for some (commutative)  {principal}
$\TT^n$-bundle $q:Y\to X$.
\end{enumerate}
\end{thm}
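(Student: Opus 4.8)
The plan is to prove the cyclic chain $(1)\Rightarrow(3)\Rightarrow(2)\Rightarrow(1)$, using throughout that every NCP $\TT^n$-bundle is an $\RKK$-fibration (Remark~\ref{rem-fibration}(4)) and that an $\RKK$-equivalence induces an isomorphism of the associated $K$-theory group bundles (Corollary~\ref{cor-spectral}, applied to the $\KK$-representable theory $K_*=\KK_*(\CC,\cdot)$). For $(1)\Rightarrow(3)$, if $f$ is null-homotopic I would take its constant value to be the identity $1\in\TT^{\frac{n(n-1)}{2}}\cong\widehat{Z_n}$, where the fibre $C^*(H_n)_1=C^*(\ZZ^n)=C(\TT^n)$ is commutative. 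Working $\TT^n$-equivariantly over $X$, homotopy invariance of $\RKK(X;\cdot,\cdot)$ together with the fibration property yields an equivariant $\RKK(X;\cdot,\cdot)$-equivalence between $f^*(C^*(H_n))$ and the trivial commutative bundle $C_0(X\times\TT^n)$ carrying its translation action. Since the associated-bundle construction $Y*(-)$ is functorial for such equivalences, it transports this to an $\RKK$-equivalence $A(X)=Y*(f^*(C^*(H_n)))\sim_{\RKK}Y*(C_0(X\times\TT^n))$; and a direct identification of the diagonal quotient gives $Y*(C_0(X\times\TT^n))\cong C_0(Y)$, the commutative principal bundle attached to $[Y]$. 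The delicate point here is checking that the equivalence can be taken equivariantly and survives the fixed-point construction.

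For $(3)\Rightarrow(2)$, both $A(X)$ and $C_0(Y)$ are $\RKK$-fibrations (the latter by Remark~\ref{rem-fibration}(3)), so Corollary~\ref{cor-spectral} turns the $\RKK$-equivalence into an isomorphism $\mathcal K_*(A(X))\cong\mathcal K_*(C_0(Y))$ of group bundles; it then suffices to note that $\mathcal K_*(C_0(Y))$ is trivial for every classical principal $\TT^n$-bundle $Y\to X$, because its holonomy $c_\gamma$ transports $K_*(C(\TT^n))$ by the structure-group action, which is by translations of the connected group $\TT^n$ and hence homotopic to the identity. For the reduction step of $(2)\Rightarrow(1)$, the datum $[Y]$ contributes trivially to holonomy for exactly this reason, so the monodromy of $\mathcal K_*(A(X))$ equals that of $\mathcal K_*(f^*(C^*(H_n)))$, which is the pull-back along $f_*\colon\pi_1(X)\to\pi_1(\TT^{\frac{n(n-1)}{2}})=\ZZ^{\frac{n(n-1)}{2}}$ of the monodromy representation $\rho$ of the Heisenberg bundle $C^*(H_n)$. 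Thus triviality of $\mathcal K_*(A(X))$ forces $f_*(\pi_1(X))\subseteq\ker\rho$; since $\TT^{\frac{n(n-1)}{2}}$ is aspherical, $f$ is null-homotopic iff $f_*=0$, so everything comes down to proving that $\rho$ is faithful.

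This faithfulness is the heart of the matter and the one genuine computation. I would identify $K_*(A_\omega)$ with the exterior algebra $\Lambda^*\ZZ^n$, so that $K_0\cong\Lambda^{\mathrm{even}}\ZZ^n$ and $K_1\cong\Lambda^{\mathrm{odd}}\ZZ^n$, and compute, for each pair $i<j$, the monodromy automorphism $u_{ij}$ obtained by winding once positively in the $W_{ij}$-coordinate to be the degree-raising unipotent map $\xi\mapsto\xi+(e_i\wedge e_j)\wedge\xi$. Because the generators $u_{ij}$ commute, do not decrease exterior degree, and satisfy $\rho(m)=\prod_{i<j}u_{ij}^{m_{ij}}$, evaluating on the class $[1]\in\Lambda^0\ZZ^n\subseteq K_0$ gives $\rho(m)([1])=[1]+\sum_{i<j}m_{ij}\,e_i\wedge e_j+(\text{terms of degree}\geq 4)$, whose degree-$2$ component is exactly $\sum_{i<j}m_{ij}\,e_i\wedge e_j$. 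As the $e_i\wedge e_j$ form a basis of $\Lambda^2\ZZ^n$, this component, and hence $\rho(m)$, can be trivial only when all $m_{ij}=0$; therefore $\ker\rho=0$, $f_*=0$, and $f$ is null-homotopic. The laborious part I expect to dominate the work is this explicit holonomy computation for $C^*(H_n)$: extracting $u_{ij}$ from the continuous-field structure in the central unitaries $W_{ij}$, for instance through an iterated Pimsner--Voiculescu or Mayer--Vietoris analysis of the bundle over $\TT^{\frac{n(n-1)}{2}}$.
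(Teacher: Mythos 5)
A preliminary remark: this theorem is not proved in the present paper at all; it is imported verbatim from \cite[Theorem 7.2]{ENO}, so your proposal can only be measured against the monodromy facts the paper itself records in Section 5. Measured against those, the central computation in your step (ii)$\Rightarrow$(i) is wrong. You take the holonomy generator $u_{ij}$ of the Heisenberg bundle $C^*(H_n)$ to be the degree-raising operator $\xi\mapsto\xi+(e_i\wedge e_j)\wedge\xi$ on $\Lambda^*\ZZ^n$, and you detect $m_{ij}$ in the degree-two part of $\rho(m)([1])$. The true holonomy is the \emph{transpose} of this operator: in the basis $([1],\beta)$ of $K_0$ of the fibre it is given by matrices of the form $\left(\begin{smallmatrix}1&k\\0&1\end{smallmatrix}\right)$ --- exactly what Section 5 quotes from \cite[Proposition 5.2]{ENO} --- i.e.\ $u_{ij}=\id+(\text{contraction by } e_i\wedge e_j)$: it \emph{fixes} $[1]$ and all classes $[U_k]\in K_1$, and sends $e_i\wedge e_j\mapsto e_i\wedge e_j+[1]$. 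Concretely, for $n=2$ the unit is a global section of the bundle, whereas the continuation of the Bott class along the loop is the Rieffel projection class of trace $\theta$, which returns as a rank-one class $[1]\pm\beta$; it is $\beta$ that moves, not $[1]$. The paper relies on precisely this invariance of $[1]$ and of the $[U_{i,x}]$ when it computes $H^0(\TT^2,\mathcal K_0(A))=\ZZ[1]$ and $\operatorname{Coinv}_{\ZZ^2}K_0\cong\ZZ/k\ZZ\oplus\ZZ$ (with $[1]$ generating the torsion part); with your formula the invariants of $K_0$ would be generated by $\beta$ instead, contradicting those computations. Consequently $\rho(m)([1])=[1]$ for every $m$, and the evaluation on $[1]$ --- the step your entire faithfulness argument rests on --- yields no information.

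The error is only a transposition, and your skeleton (monodromy of $\mathcal K_*(A)$ is pulled back along $f_*$ from the unipotent representation $\rho$ of the Heisenberg bundle; asphericity of the torus reduces (i) to $f_*=0$; faithfulness of $\rho$ finishes) survives once you dualize the test class: since $u_{kl}$ fixes $e_i\wedge e_j$ whenever $\{k,l\}\neq\{i,j\}$, one gets $\rho(m)(e_i\wedge e_j)=e_i\wedge e_j+m_{ij}[1]$, and faithfulness follows. Two further soft spots. First, in (i)$\Rightarrow$(iii) the point you yourself flag --- making the equivalence $\TT^n$-equivariant and pushing it through the fixed-point construction --- is a genuine gap as written; the route recorded in Section 5 avoids it entirely: a homotopy $F:[0,1]\times X\to\TT^{\frac{n(n-1)}{2}}$ produces the NCP bundle $(Y\times[0,1])*F^*(C^*(H_n))$ over $[0,1]\times X$, and \cite[Proposition 3.2]{ENO} gives the $\RKK$-equivalence of its endpoint restrictions, which for constant $f_1$ reads $A(X)\sim_{\RKK}C_0(Y)$. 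Second, your identification of the monodromy of $\mathcal K_*(A)$ with $f^*\rho$ (discarding the $[Y]$-contribution because translations of the connected group $\TT^n$ are homotopic to the identity) is the right idea but needs to be proved as a lemma; it is essentially the content of \cite[Proposition 5.2]{ENO}, not something established in this paper.
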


In this section we will use the Leray-Serre spectral sequence to obtain the following triviality result.

\begin{thm}\label{thm-triv}
Let $A(X)$ be the NCP $\TT^n$-bundle corresponding to the pair $([Y], f)$ as 
explained above such that $X$ is a finite dimensional {locally finite and $\sigma$-finite}
simplicial complex. Then the following are equivalent:
\begin{enumerate}
\item $A(X)$ is $\RKK$-equivalent to $C_0(X\times \TT^n)$.
\item The $K$-theory bundle $\mathcal K_*(A)$ is trivial and all $d_2$-differentials
in the Leray-Serre spectral theorem for $A(X)$ vanish.
\item $f$ is homotopic to a constant map and $Y\cong X\times \TT^n$ as $\TT^n$-bundles.
\end{enumerate}
\end{thm}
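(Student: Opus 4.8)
The plan is to prove the cycle $(i)\Rightarrow(ii)\Rightarrow(iii)\Rightarrow(i)$, using Theorem~\ref{thm-ENO} to reduce the role of the map $f$ to the triviality of the $K$-theory bundle $\mathcal K_*(A)$, so that the genuinely new content concerns detecting triviality of the classical principal bundle $Y$ through the $d_2$-differential. Throughout we use that $A(X)$ is an $\RKK$-fibration by Remark~\ref{rem-fibration}(4), so that the machinery of Section~\ref{sec-lerayserre} applies; since $K$-theory is $\sigma$-additive and $X$ is locally finite, the $E_2$-term is $E_2^{p,q}=H^p_{fin}(X;\mathcal H_{p+q})$ by Theorem~\ref{thm-LS}. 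Note also that the relevant algebras are nuclear (the group algebra of the nilpotent group $H_n$ is nuclear, and so are $C_0(Y)$ and $C_0(X\times\TT^n)$), so that Corollary~\ref{cor-spectral} is available in full.

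For $(iii)\Rightarrow(i)$ I would argue directly: if $f$ is null-homotopic, Theorem~\ref{thm-ENO} yields an $\RKK$-equivalence $A(X)\sim_{\RKK}C_0(Y)$ for the principal bundle $Y$ of the defining pair $([Y],f)$, and combining this with the hypothesis $Y\cong X\times\TT^n$ gives $A(X)\sim_{\RKK}C_0(X\times\TT^n)$. For $(i)\Rightarrow(ii)$, Corollary~\ref{cor-spectral} shows that an $\RKK$-equivalence to $C_0(X\times\TT^n)$ induces an isomorphism of the associated group bundles and of the entire Leray--Serre spectral sequences, so it suffices to verify $(ii)$ for the trivial bundle $C_0(X\times\TT^n)=C_0(X)\otimes C(\TT^n)$. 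Its $K$-theory bundle is plainly trivial, since the translation action of $\TT^n$ on $K_*(C(\TT^n))$ is homotopic to the identity and hence produces no monodromy; and the vanishing of $d_2$ (indeed of all differentials) follows from a Leray--Hirsch/K\"unneth argument, the fibre generators of $K_*(C(\TT^n))$ extending globally via the projection $X\times\TT^n\to\TT^n$ and therefore surviving to $E_\infty$.

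The heart of the matter is $(ii)\Rightarrow(iii)$. Triviality of $\mathcal K_*(A)$ forces $f$ to be null-homotopic by Theorem~\ref{thm-ENO}, whence $A(X)\sim_{\RKK}C_0(Y)$ and, by Corollary~\ref{cor-spectral}, the hypothesis $d_2=0$ transfers to the spectral sequence of the commutative bundle $C_0(Y)$. It then remains to show that $d_2=0$ forces $Y$ to be trivial. Since $\TT^n$ is a product of circles and its structure group is abelian, $Y$ splits as a fibre product $Y_1\times_X\cdots\times_X Y_n$ of principal circle bundles with first Chern classes $c_1,\dots,c_n\in H^2(X;\ZZ)$, and $Y\cong X\times\TT^n$ if and only if all $c_i$ vanish. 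The key computation is to identify the $C^*$-algebraic spectral sequence of $C_0(Y)$ with the classical $K$-theory Leray--Serre spectral sequence of the fibration $Y\to X$, matching the explicit differential of Proposition~\ref{prop-differential} against the topological one, and then to evaluate $d_2$ on the fibre generators $[U_i]\in K_1(C(\TT^n))\subseteq E_2^{0,1}$. The expected value is $d_2[U_i]=\pm c_i\in H^2(X;\ZZ)=E_2^{2,0}$, which for a single circle factor is precisely the statement that the Gysin differential is cup product with the Euler (first Chern) class. Vanishing of $d_2$ then gives $c_1=\cdots=c_n=0$, hence $Y\cong X\times\TT^n$, completing $(ii)\Rightarrow(iii)$.

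The main obstacle is exactly this last computation: showing that the differential produced by the abstract exact-couple construction of Section~\ref{sec-lerayserre}, when specialised to the commutative fibration $C_0(Y)$, coincides with cup product with the Chern classes. The difficulty is twofold, namely identifying the $C^*$-algebraic Leray--Serre spectral sequence of a commutative bundle with the classical topological one (so that the known Gysin/Euler-class computation becomes available), and then passing from the single circle factor to the full torus through the multiplicative structure. Reducing to the circle case via the splitting $Y=Y_1\times_X\cdots\times_X Y_n$ and computing only on the generators $[U_i]$ should keep this step tractable.
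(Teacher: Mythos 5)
Your proposal has essentially the same architecture as the paper's proof: Theorem \ref{thm-ENO} converts triviality of $\mathcal K_*(A)$ into null-homotopy of $f$ and an $\RKK$-equivalence $A(X)\sim_{\RKK}C_0(Y)$, Corollary \ref{cor-spectral} transports the hypothesis $d_2=0$ across that equivalence, the trivial-bundle direction is a K\"unneth argument (the paper's Remark \ref{rem-triv}(i) reduces to the Hirzebruch spectral sequence of $C_0(X)$ and kills $d_2$ by a parity count on $E_2^{p,q}$; your Leray--Hirsch phrasing implicitly needs a multiplicative structure on the spectral sequence which the paper never constructs, so the K\"unneth reduction is the safer formulation), and everything then hinges on showing that $d_2$ on the fibre generators computes the first Chern classes $c_i$ of the circle bundles $Y_i=Y/\TT^n_i$. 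Your fibre-product splitting of $Y$ and reduction to the circle case is the paper's Proposition \ref{prop-Tn-trivial} in slightly different clothing (the paper uses the projections $Y\to Y_i$, the induced morphisms of spectral sequences, and injectivity of $H^2(X,K^0(Y_{i,x}))\to H^2(X,K^0(Y_x))$). One small citation point: Theorem \ref{thm-ENO} as stated only gives $\RKK$-equivalence to $C_0(Y')$ for \emph{some} principal bundle $Y'$; to get the specific $Y$ of the classifying pair, both your (iii)$\Rightarrow$(i) and (ii)$\Rightarrow$(iii) need the homotopy invariance of the construction $([Y],f)\mapsto Y*f^*(C^*(H_n))$, which the paper quotes from \cite[Proposition 3.2]{ENO}.

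The genuine divergence, and the place where your proof is not complete, is the key identity $d_2[U_i]=\pm c_i$ itself, which you state as the ``expected value'' and flag as the main obstacle rather than prove. This is exactly the content of the paper's Lemma \ref{lem-chern} (together with Propositions \ref{prop-trivial} and \ref{prop-Tn-trivial}): there the computation is done directly in the $C^*$-framework, by choosing an atlas with transition data $h_{\sigma,\sigma'}$, extending the fibre unitary over the $1$-skeleton, lifting to a projector over the $2$-skeleton, and identifying the resulting $2$-cocycle $\sigma\mapsto m_\sigma$ (a sum of winding numbers) with the \v{C}ech cocycle of the first Chern class. Your proposed alternative---identify the exact-couple spectral sequence of $C_0(Y)$ with the topological Leray--Serre spectral sequence of $Y\to X$ and quote the Gysin/Euler-class computation---is viable in principle, but it is not free: (a) the identification must be made compatible with the sign conventions built into Propositions \ref{prop-affine}--\ref{prop-differential}, which is nontrivial bookkeeping; and (b) the K-theoretic Euler class of the associated line bundle $L$ is $1-[L]$, not $c_1(L)$; it is only its image in $E_2^{2,0}=H^2(X;\ZZ)$ (the leading term of the filtration) that equals $c_1(L)$, which is what makes the $E_2$-level statement correct. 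So the skeleton of your argument matches the paper, but the proof is missing precisely the computation that constitutes the paper's main technical work.
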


The proof depends on explicit calculations of the $d_2$ maps in the spectral 
sequence of a commutative principal $\TT^n$-bundle $q:Y\to X$, and then 
transporting this result to the spectral sequence for the  {$K_*$-fibration} $A(X)$.
In what follows we shall always denote by $Y_x$ the fibre of a given 
{principal} $\TT^n$-bundle over a point $x\in X$ and we write
$\{E^{p,q}_r(Y),d_r\}$  for the spectral sequence corresponding to a fixed triangulation 
of the base $X$. We always assume that $X$ is finite dimensional and $\sigma$-finite.
The following proposition is certainly well-known to the experts, but since we 
didn't find an appropriate reference we give a proof.

\begin{prop}\label{prop-trivial}
Let $Y\stackrel{q}{\to}X$ be a principal $\TT$-bundle. Then the
$(0,1)$-degree component of the differential 
$$d_2^{0,1}:H^0(X,K^{1}(Y_x))\to H^2(X,K^{0}(Y_x))$$ on $E^{*,*}_2(Y)$
vanishes if and
only if $Y\stackrel{q}{\to}X$ is trivial.
\end{prop}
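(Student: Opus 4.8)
The plan is to identify the differential $d_2^{0,1}$, up to sign, with the Euler class of the circle bundle, and then to use that a principal $\TT$-bundle is trivial exactly when its Euler class vanishes. First I would pin down the two $E_2$-groups. Since $q\colon Y\to X$ is \emph{principal}, the structure group $\TT$ acts on each fibre $Y_x\cong\TT$ by translations, which are homotopic to the identity; hence the monodromy action of $\pi_1(X)$ on $K^*(Y_x)$ (the maps $c_\gamma$ of Proposition \ref{prop-bundle}) is trivial, and the group bundles $\mathcal H_1=\{K^1(Y_x)\}$ and $\mathcal H_0=\{K^0(Y_x)\}$ are the \emph{constant} bundles with fibre $\ZZ$. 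Using $\mathcal H_2\cong\mathcal H_0$ (Bott periodicity), Theorem \ref{thm-LS} gives $E_2^{0,1}=H^0(X;\ZZ)$ and $E_2^{2,0}=H^2(X;\ZZ)$, so that $d_2^{0,1}\colon H^0(X;\ZZ)\to H^2(X;\ZZ)$ is determined by the single class $e:=d_2^{0,1}(1)\in H^2(X;\ZZ)$.

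The direction ``$Y$ trivial $\Rightarrow d_2^{0,1}=0$'' is immediate: if $Y\cong X\times\TT$ then $A=C_0(Y)\cong C_0(X)\otimes C(\TT)$ is a product bundle, the spectral sequence is that of a trivial fibration, and a K\"unneth argument collapses it at $E_2$, so all differentials vanish. For the converse it suffices to show that $e=d_2^{0,1}(1)$ equals, up to sign, the Euler class $e(Y)\in H^2(X;\ZZ)$: indeed principal $\TT$-bundles over $X$ are classified by $H^2(X;\ZZ)=[X,\CC P^{\infty}]$ via the Euler class, so $e(Y)=0$ forces $Y$ to be trivial.

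To identify $e$ with $\pm e(Y)$ I would argue by naturality and reduce to a universal computation. Because $X$ is finite-dimensional, its classifying map factors through a finite skeleton $g\colon X\to\CC P^{N}$ with $Y\cong g^*(S^{2N+1}\to\CC P^{N})$ and $e(Y)=g^*u_N$, where $u_N$ generates $H^2(\CC P^{N};\ZZ)$. The exact couple underlying the spectral sequence, and hence each differential, is natural under pull-back of bundles (compare Corollary \ref{cor-spectral}): after a simplicial approximation making $g$ simplicial, $g$ induces a morphism of skeletal filtrations, hence of exact couples, realising $g^*$ in cohomology compatibly with the constant-coefficient identifications above. Thus $d_2^{0,1}(1)$ for $Y$ is the $g^*$-image of the corresponding universal differential applied to $1$, so it remains to compute the latter for $S^{2N+1}\to\CC P^{N}$. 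Restricting along $\CC P^{1}=S^2\hookrightarrow\CC P^{N}$, which is an isomorphism on $H^2$, reduces this to the Hopf bundle $S^3\to S^2$. There $E_2$ carries one copy of $\ZZ$ in each of the bidegrees $(0,0),(0,1),(2,0),(2,1)$, the unit $1\in E_2^{0,0}$ is a permanent cycle, and convergence to $K^0(S^3)=K^1(S^3)=\ZZ$ forces $d_2^{0,1}\colon\ZZ\to\ZZ$ to be an isomorphism. Hence the universal differential sends $1$ to $\pm u_N$, so $e=d_2^{0,1}(1)=\pm g^*u_N=\pm e(Y)$, and $d_2^{0,1}=0\iff e(Y)=0\iff Y$ is trivial.

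The main obstacle is the naturality step: the spectral sequence is built from a triangulation of the base, so functoriality under a base-change pull-back must be set up carefully, via simplicial approximation of the classifying map together with a check that the induced map of exact couples realises $g^*$ on $E_2=H^*(X;\mathcal H_\bullet)$ with the correct coefficient bundle. Once this is in place everything else is routine: the Hopf rank count is a short diagram chase, and the trivial direction is a K\"unneth collapse. Conceptually the identification $e=\pm e(Y)$ also follows from the $K$-theory Gysin sequence of the circle bundle, whose connecting map is multiplication by the $K$-theoretic Euler class $1-[L]$ of the associated line bundle $L$; its leading term in $H^2(X;\ZZ)$ is $-e(Y)$, and the bottom differential $d_2^{0,1}$ extracts exactly this leading term, there being no higher filtration to interfere in that corner.
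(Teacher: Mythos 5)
Your proposal is correct in substance but takes a genuinely different route from the paper's. The paper never invokes classifying spaces or functoriality of the spectral sequence: it computes $d_2^{0,1}$ directly from the skeletal filtration. Concretely, from an atlas $\V$ of the bundle it builds a unitary $U_\V$ over $Y_0$ which extends to the $1$-skeleton, so that its class survives to a generator $\omega_\V$ of $E_2^{0,1}\cong H^0(X,K^1(Y_x))\cong\ZZ$; then (Lemma \ref{lem-chern}) it lifts the extension $W_\V$ to a projector $P_\V$ over the $2$-skeleton and evaluates the resulting boundary class cell by cell, finding that its value on a $2$-simplex is the winding number $m_\sigma=h_{x_i,x_j}+h_{x_j,x_k}+h_{x_k,x_i}$, i.e.\ exactly the \v{C}ech cocycle of the first Chern class; thus $d_2^{0,1}\omega_\V=c_1(Y)$ on the nose, and the proposition follows. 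You instead pin down $d_2^{0,1}(1)=\pm e(Y)$ by soft arguments: trivial monodromy (fibre translations are homotopic to the identity), naturality under pull-back along a classifying map $g:X\to\CC P^N$, restriction to $\CC P^1$, and a convergence/rank count for the Hopf bundle $S^3\to S^2$. That count is correct: the unit in $E_2^{0,0}$ is a permanent cycle, $E_2$ has total rank $4$ while $K^0(S^3)\oplus K^1(S^3)$ has rank $2$, so $d_2^{0,1}$ must be an isomorphism there. Your K\"unneth argument for the trivial direction is the same as the paper's Remark \ref{rem-triv}. What each approach buys: yours avoids all explicit $K$-theory cocycle manipulations and identifies the differential conceptually (only up to sign, which suffices here); the paper's is self-contained in its framework and needs no properties of the spectral sequence beyond its definition.

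The one real caveat is the step you yourself flag, and it is heavier than your sketch suggests. The paper's spectral sequence is defined relative to a fixed triangulation of $X$, and its functoriality under base change is established nowhere in the paper. To run your argument one needs (a) that a simplicial map of bases induces a morphism of exact couples realising the expected map on $E_2$ — for \emph{compact} $X$ this works, since a simplicial map respects skeleta and pull-back of functions then gives $*$-homomorphisms compatible with the filtrations; and (b) invariance of $d_2$ under subdivision, because simplicial approximation forces you to subdivide $X$; this can be handled by comparing the two skeletal filtrations (original skeleta sit inside subdivided skeleta), but it is genuinely additional work. Moreover, the paper's standing hypotheses allow non-compact (locally finite, $\sigma$-finite) $X$, and there your argument as written breaks: the classifying map is then not proper, so it does not act on the compactly supported $K$-theory $K_*(C_0(\cdot))$ that underlies the paper's spectral sequence (whose $E_2$-terms are $H^p_{fin}$ by Theorem \ref{thm-LS}). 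The paper's direct cocycle computation is designed precisely to avoid this machinery, which is why it is carried out by hand in Lemma \ref{lem-chern}.
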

The proof of this proposition will require some preliminary work. 
Let $X_{-1}=\emptyset\subset X_0\subset\cdots\subset X_n=X$ be the
skeleton decomposition of $X$ and let us set $Y_k=q^{-1}(X_k)$. For $\sigma$
a simplex of $X$, let us denote by $V_\sigma$ the closure of the
$*$-neighboorhood of $\sigma$. {We may assume that}
\begin{itemize}
\item for all simplices $\sigma$ there exist
continuous maps $\Psi_\sigma:q^{-1}(V_\sigma)\to\TT$;
\item for all pairs of simplices $\sigma$ and $\sigma'$  which are faces of a common simplex,
 there exists a continuous map $h_{\sigma,\sigma'}:V_\sigma\cap
 {V_{\sigma'}}\to \RR$
\end{itemize}
such that
\begin{enumerate}
\item $q^{-1}(V_\sigma)\to V_\sigma\times \TT;y\mapsto
 (q(y),\Psi_\sigma(y))$ is a $\TT$-equivariant homeomorphism.
\item $\Psi_{\sigma'}=\Psi_\sigma\cdot e^{2i\pi h_{\sigma,\sigma'}}$ on $V_\sigma\cap
{ V_{\sigma'}}$  for all pairs of simplices $\sigma$ and $\sigma'$  which
 are faces of a common simplex.
\end{enumerate}
We will denote by $\V$ the corresponding atlas. Notice that this atlas
provides an idenfication $K^1(Y_x)\cong K^1(\TT)$ induced by
$Y_x\to\TT;\,y\mapsto \Psi_\sigma(y)$ for any {$x$ in the} simplex $\sigma$.
{Although the identification $Y_x\cong\TT$ depends on $\sigma$, it follows from (ii) that
the induced map
$K^1(Y_x)\cong K^1(\TT)$ does not.}

% and
%that an orientation for $\TT$ gives raise to a isomorphism
%$K^1(\TT)\cong\ZZ$ which identifies the class of the oriented 

Let {$X_0=\{x_0,x_1, x_2, x_3, \ldots\}$ be} the set of vertices of $X$. If $x_i$
and $x_j$ are connected by an edge, we will denote by $e_{i,j}$ the
oriented edge starting at $x_i$ and ending at $x_j$. For 
$t\in [0,1]$, we define  $x_{i,j}(t)=(1-t)x_i+tx_j$ in $e_{i,j}$. Let
$U_\V:Y_0\to\TT$ be the continuous map such that $U_\V$ and
$\Psi_{x_i}$ coincide on $q^{-1}(x_i)$. We extend $U_\V$ to a
continuous map $W_\V:Y_1\to\TT$ in the following way: If $x_i$
and $x_j$ are connected by the oriented edge $e_{i,j}$, we define 
$W_\V$ on $q^{-1}(e_{i,j})$ by 
\begin{equation*}
W_\V(z)=e^{2i\pi t h_{x_i,x_j}(q(z))}\Psi_{x_i}(z)
\end{equation*} for $q(z)=x_{i,j}(t)$.
We have 
\begin{eqnarray*}
E_1^{0,1}(Y)&=&K^1(Y_0\setminus Y_{-1})=K^1(Y_0)\\
E_1^{1,2}(Y)&=&K^0(Y_1\setminus Y_{0})
\end{eqnarray*} and 
$d_1^{0,1}:K^1(Y_0)\to K^0(Y_1\setminus Y_{0})$ is the boundary map
$\partial$ 
associated the the pair $(Y_0,Y_1)$. Since $U_\V$ is the restriction
of $W_\V$ to $Y_0$, the class $[U_\V]$ of $U_\V$ in $K^1(Y_0)$
satisfies $\partial[U_\V]=0$ and thus $[U_\V]$ defines a class
$\omega_\V$ in
$E_2^{0,1}(Y)\cong H^0(X,K^1(Y_x))\cong \ZZ$ which is thereby a
generator.
\begin{lem}\label{lem-chern}
With the notations  above and up to the canonical identification
$K^{0}(Y_x)\cong\ZZ$ (which sends $[1]$ to $1$)
$d_2^{1,0}\omega_\V\in H^2(X,K^{0}(Y_x))\cong H^2(X,\ZZ)$ is
the first Chern class of $Y$.
\end{lem}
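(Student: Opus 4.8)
The plan is to unwind the transgressive differential $d_2$ directly from the exact couple of Section \ref{sec-lerayserre} applied to $A=C_0(Y)$, using the lift $W_\V$ that has already been produced, and then to reduce the whole computation to a single closed $2$-simplex, where the resulting integer is computed by hand and recognised as the standard \v{C}ech representative of $c_1(Y)$. In the couple $\{H,E,q^*,\iota^*,\partial\}$ one has $i=q^*$, $k=\iota^*$, $j=\partial$ and $d_1=\partial\circ\iota^*$, so for a $d_1$-cocycle $e$ the derived differential is $d_2[e]=[\partial x]$, where $x\in H$ is any class with $q^*x=\iota^* e$. Here $\omega_\V$ is represented by $[U_\V]\in E_1^{0,1}=K^1(Y_0)$, and the computation preceding the lemma shows precisely that $[U_\V]=q^*[W_\V]$ under the restriction $K^1(Y_1)\to K^1(Y_0)$; hence $x=[W_\V]$ and $d_2\omega_\V$ is represented by $\partial[W_\V]\in E_1^{2,0}=K^0(Y_2\setminus Y_1)$, where $\partial$ is the index map of $0\to C_0(Y_2\setminus Y_1)\to C_0(Y_2)\to C_0(Y_1)\to 0$. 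Since $Y_2\setminus Y_1=\coprod_{\sigma\in C_2}q^{-1}(\iDelta^2_\sigma)$, it suffices to compute, for each oriented $2$-simplex $\sigma=\langle x_0,x_1,x_2\rangle$, the component $n_\sigma$ of $\partial[W_\V]$ in $h_0(A(\iDelta^2_\sigma))\cong\ZZ$, the last identification being the inverse of the canonical oriented isomorphism $\Phi_2^2$ of Definition \ref{defrem-canonical}, normalised so that $[1]\mapsto1$. By the same naturality argument used for the $d_1$-differential in Lemma \ref{lem-tau-sigma}, this $\sigma$-component is unchanged upon replacing $X$ by the closed simplex $\Delta^2_\sigma$, so I may assume $X=\Delta^2_\sigma$.

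Over $\Delta^2_\sigma$ I would trivialise with $\Psi_\sigma$, identifying $q^{-1}(\Delta^2_\sigma)\cong\Delta^2_\sigma\times\TT$ and $q^{-1}(\partial\Delta^2_\sigma)\cong\partial\Delta^2_\sigma\times\TT$, a $2$-torus. Writing $\Psi_{x_i}=\Psi_\sigma\,e^{2\pi i h_{\sigma,x_i}}$ and substituting into the defining formula for $W_\V$, one finds that on $q^{-1}(\partial\Delta^2_\sigma)$ the unitary $W_\V$ takes the form $(x,\zeta)\mapsto\zeta\,e^{2\pi i\theta(x)}$ for an explicit continuous phase $\theta\colon\partial\Delta^2_\sigma\to\RR/\ZZ$. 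Under the K\"unneth splitting of $K^1(\partial\Delta^2_\sigma\times\TT)$ its class decomposes as the fibre generator plus $n_\sigma$ times the base generator, where $n_\sigma=\oint_{\partial\Delta^2_\sigma}d\theta\in\ZZ$ is the winding number of $\theta$. The index map annihilates the fibre generator (it extends over $\Delta^2_\sigma\times\TT$) and sends the base generator to the Bott class of $C_0(\iDelta^2_\sigma)$ tensored with $[1]\in K^0(\TT)$; hence $\partial[W_\V]$ becomes $n_\sigma$ after applying $(\Phi_2^2)^{-1}$, and the lemma reduces to the identity $n_\sigma=\pm c_1(Y)(\sigma)$.

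Finally I would compute $n_\sigma$ by choosing a continuous real lift of $\theta$ along $\partial\Delta^2_\sigma=e_{01}\cup e_{12}\cup e_{20}$ and summing the integer jumps forced at the three vertices by continuity of $W_\V$. Each such jump equals the additive cocycle defect $m^\sigma_{ij}:=h_{\sigma,x_j}-h_{\sigma,x_i}-h_{x_i,x_j}\in\ZZ$, and the $h_{\sigma,x_i}$-terms telescope, leaving $n_\sigma=\pm\bigl(h_{x_0,x_1}+h_{x_1,x_2}+h_{x_2,x_0}\bigr)$. This last integer is exactly the value on $\sigma$ of the \v{C}ech $2$-cocycle obtained from the vertex transition functions $e^{2\pi i h_{x_i,x_j}}$ by the Bockstein $H^1(X,\underline{\TT})\to H^2(X,\ZZ)$ attached to the exponential sequence $0\to\ZZ\to\underline{\RR}\to\underline{\TT}\to0$, i.e.\ the first Chern class of $Y$.

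The step requiring the most care, and the main obstacle, is the bookkeeping of orientations and signs, so that the answer is $c_1(Y)$ rather than $-c_1(Y)$: one must reconcile the orientation conventions built into the canonical isomorphism $\Phi_2^2$ and into $\omega_\V$ being a positively oriented generator with the orientation of $\partial\Delta^2_\sigma$ used to compute the winding number. A secondary point to justify is the identification of the abstract index map with the geometric winding $n_\sigma$, which I would establish via the K\"unneth formula for $C_0(\iDelta^2_\sigma)\otimes C(\TT)$ together with the standard computation of the boundary map $K^1(C(S^1))\to K^0(C_0(\RR^2))$.
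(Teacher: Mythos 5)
Your proposal is correct, and its skeleton is the paper's: both proofs take $W_\V$ as the lift of $[U_\V]$ over the $1$-skeleton, identify $d_2\omega_\V$ with the class of $\partial[W_\V]$ in $E_1^{2,0}=K^0(Y_2\setminus Y_1)$, reduce to one integer per oriented $2$-simplex $\sigma$, and recognize that integer as $m_\sigma=h_{x_i,x_j}+h_{x_j,x_k}+h_{x_k,x_i}$, the \v{C}ech cocycle representing the first Chern class of $Y$. Where you genuinely diverge is in the evaluation of the boundary map. The paper cones $W_\V$ to a contraction $\phi_\V$ on all of $Y_2$, writes down the explicit $2\times 2$ unitary lift $V_\V$ and projector $P_\V$, and proves by hand that such projector classes depend only on the winding number of the restriction to $\partial\sigma$; this is elementary and self-contained, but matrix-heavy. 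You instead restrict to a single closed simplex (via a Lemma \ref{lem-tau-sigma}-style naturality argument; the paper makes the same restriction through $i_\sigma^*$ with less comment), trivialize by $\Psi_\sigma$, split the class of $W_\V$ on $q^{-1}(\partial\sigma)\cong\TT^2$ as the fibre generator plus $n_\sigma$ times the base generator, and invoke naturality of the index map under the projection $q^{-1}(\sigma)\cong\sigma\times\TT\to\sigma$, so that the fibre part dies and the base part lands on the Bott class tensored with $[1]$. That buys you a computation free of explicit projectors, at the cost of importing the standard $K^1(S^1)\to K^0(C_0(\RR^2))$ boundary computation and the K\"unneth splitting---essentially the facts that the paper's $P_f$-argument re-proves by hand. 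The terminal winding-number count agrees in both: your vertex-jump telescoping and the paper's homotopy pushing all phases onto one edge both yield $m_\sigma$. Your flagged sign/orientation caveat is honest but is not a gap relative to the paper, which pins down its identification (\ref{equ-eval}) equally loosely (``up to Bott periodicity'' and the choice of oriented generator).
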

\begin{proof}
We   extend $W_\V:Y^1\to\TT$ to a continuous map $\phi_\V:Y_2\to\CC$
given on $Y_{\sigma}=q^{-1}(\sigma)\cong \sigma\times \TT$ for
a $2$-simplex $\sigma$ in $X$ with boundary $\partial\sigma$ and center
$x_\sigma$ by $\phi_\V(tx+(1-t)x_{\sigma},z)={tW_{\V}(x,z)}$ for $t$ in $[0,1]$,
$x$ in $\partial\sigma$ and $z\in \TT$.
Then 
$$V_{\V}:=\begin{pmatrix}
\phi_\V& -(1-\phi_\V\bar{\phi}_\V)^{1/2}\\
(1-\phi_\V\bar{\phi}_\V)^{1/2}&\bar{\phi}_\V
\end{pmatrix}$$
is a lift in $U_2(C(Y_2))$ for  $\left(\begin{smallmatrix}
W_\V& 0\\
0&\overline{W}_\V
\end{smallmatrix}\right)$
and thus 
$\partial[W_\V]=[P_\V]-\left[\left(\begin{smallmatrix}
1& 0\\
0&0
\end{smallmatrix}\right)\right]$, where
$$
P_\V=
V_{\V}\cdot
\begin{pmatrix}
1& 0\\
0&0
\end{pmatrix}
\cdot V_\V^*
=\begin{pmatrix}
{\phi}_\V\bar{\phi}_\V&\phi_\V(1-\phi_\V\bar{\phi}_\V)^{1/2}\\
\bar{\phi}_\V(1-\phi_\V\bar{\phi}_\V)^{1/2}&1-{\phi}_\V\bar{\phi}_\V
\end{pmatrix}
$$
is a projector in $M_2(C(Y_2))$.
Then, {up to Bott periodicity}, $d_2^{1,0}\omega$ is the class in  $H^2(X,K^{0}(Y_x))=E^{2,0}_2(Y)$
of the simplicial $2$-cocycle $c$, with value on a $2$-simplex
$\sigma$ {oriented by its boundary $\partial\sigma$}
$$c(\sigma)=i_\sigma^*\circ\partial[W_\V]\in K^0(q^{-1}(\intsi))\cong
K^0(\intsi\times\TT),$$ where $\intsi=\,\sigma\setminus\partial\sigma$ and
$i_\sigma$ is the inclusion  map 
$i_\sigma:\intsi\hookrightarrow Y_2\setminus Y_1$. Finally, we get 
$$c(\sigma)=[P_\V|\sigma]-\left[\left(\begin{smallmatrix}
1& 0\\
0&0
\end{smallmatrix}\right)\right]\in K^0(q^{-1}(\intsi)),$$
where 
$P_\V|\sigma\in M_2(C(q^{-1}(\sigma))$ is the  restriction of  $P_\V$ to 
$q^{-1}(\sigma)$. Let us denote by $\Phi_\sigma$ the inverse of the
trivialisation map $q^{-1}(\sigma)\to\sigma\times\TT;\,y\mapsto
(q(y),\Psi_\sigma(y))$. We then  get isomorphisms
\begin{equation}\label{equ-eval}
K^0(q^{-1}(\intsi))\cong K^0(\intsi)\cong \ZZ,
\end{equation}
where 
\begin{itemize}
\item the first isomorphism is induced by $\intsi\to q^{-1}(\intsi);\,
x\mapsto \Phi_\sigma(x,1)$ {(if we identify $q^{-1}(\intsi)$
with   $\intsi\times\TT$  via
$\Psi_\sigma$, this simply becomes 
$\intsi\to\intsi\times\TT;\,x\mapsto (x,1)$)};
\item the second map is the Bott periodicity for the interior
 $\intsi\cong\RR^2$ of the
 oriented simplex $\sigma$.
\end{itemize}
Let us define for a continuous map $f:\sigma\to\CC$ with $|f|\leq 1$ 
and $|f|=1$ on $\partial\sigma$ the projector
$$P_f=\begin{pmatrix}
|f|^2&f(1-|f|^2)^{1/2}\\
\bar{f}(1-|f|^2)^{1/2}&1-|f|^2
\end{pmatrix}.$$
{Then $[P_f]-\left[\left(\begin{smallmatrix}
1& 0\\
0&0
\end{smallmatrix}\right)\right]$ is the image of
$[f|_{\partial\sigma}]\in K^1(\partial\sigma)$ under the boundary map in
$K$-theory associated to the exact sequence 
$$0\to C_0(\intsi)\to C(\sigma)\to C(\partial\sigma)\to 0.$$ In particular  $[P_f]-\left[\left(\begin{smallmatrix}
1& 0\\
0&0
\end{smallmatrix}\right)\right]$ only depends on the winding number of
$f|_{\partial\sigma}$ on $\partial\sigma$ and this winding number is
precisely the image of  $[P_f]-\left[\left(\begin{smallmatrix}
1& 0\\
0&0
\end{smallmatrix}\right)\right]$  under the second isomorphism of equation
\ref{equ-eval}.} 
Consequently, if we set
$$f_{\V,\sigma}:\sigma\to\CC;\, x\mapsto  \Phi_\sigma(x,1),$$ then the
image of $c(\sigma)$ under the chain of isomorphism of equation
\ref{equ-eval}
is the winding number of the restriction of $f_{\V,\sigma}$ to the
oriented boundary $\partial\sigma$.

If $\sigma$ has vertices
$x_i,x_j$ and $x_k$ connected by oriented edges $e_{i,j},e_{j,k}$ and
{$e_{k,i}$}, then since $h_{\sigma,x_i}+h_{x_i,x_j}-h_{\sigma,x_j}$ and
 $h_{\sigma,x_j}+h_{x_j,x_k}-h_{\sigma,x_k}$ are integers, we have
{$f_{\V,\sigma}=e^{2i\pi h_{\sigma,x_i}}\cdot g_{\V,\sigma}$} where
\begin{eqnarray*}
g_{\V,\sigma}(x_{i,j}(t))&=&\exp (2i\pi th_{x_i,x_j}(x_{i,j}(t)))\\
g_{\V,\sigma}(x_{j,k}(t))&=&\exp (2i\pi(h_{x_i,x_j}(x_{j,k}(t))+
th_{x_j,x_k}(x_{j,k}(t))))\\
g_{\V,\sigma}(x_{k,i}(t))&=&\exp (2i\pi(h_{x_i,x_j}(x_{k,i}(t))+
h_{x_j,x_k}(x_{k,i}(t))+th_{x_k,x_i}(x_{k,i}(t)))).
\end{eqnarray*}
But {$e^{2i\pi h_{\sigma,x_i}}$} has winding number $0$, and
$h_{x_i,x_j}$ and $h_{x_j,x_k}$ can be pushed forward homotopically to
the edge {$e_{k,i}$}, and thus  the restriction of $g_{\V,\sigma}$ to $\partial\sigma$ is
a unitary  homotopic to 
\begin{eqnarray*}
x_{i,j}(t)&\mapsto&1\\
x_{j,k}(t)&\mapsto&1\\
x_{k,i}(t)&\mapsto&\exp (2i\pi t(h_{x_i,x_j}(x_{k,i}(t))+
h_{x_j,x_k}(x_{k,i}(t))+h_{x_k,x_i}(x_{k,i}(t)))).
\end{eqnarray*}
Since $h_{x_i,x_j}+
h_{x_j,x_k}+h_{x_k,x_i}$ is an integer $m_{\sigma}$,  the
restriction to $\partial\sigma$ of $g_{\V,\sigma}$ and hence of $f_{\V,\sigma}$ has
winding number $m_{\sigma}$. Up to the composition of the two
isomorphisms of equation \ref{equ-eval}, we finally get that
$c(\sigma)=m_{\sigma}$, which is precisely the cocycle defining the first
Chern class of $Y$.
\end{proof}

\begin{proof}[Proof of Proposition \ref{prop-trivial}] Since $\omega_\V$ is
a generator for $E^{0,1}_2(Y)\cong\ZZ$, we see from  Lemma \ref{lem-chern}
that $d_2^{0,1}:E^{0,1}_2(Y)\to E^{2,0}_2(Y)$ is vanishing if and only if the
first Chern class of $Y$ vanishes, i.e if and only if $Y$ is trivial.
\end{proof}

Let us now generalise this result to $\TT^n$-principal bundles. For this, we
define  $$\TT^n_i=\{(z_1,\ldots,z_n)\in\TT\text{ such that }z_i=1\}$$
and we let $Y_i=Y/\TT^n_i$ denote the quotient space for the action of 
$\TT^n_i$ on {a principal  $\TT^n$-bundle  $Y\to X$}. Then $Y_i$ is a $\TT$-principal bundle with
action induced by the inclusion $\alpha_i:\TT\hookrightarrow\TT^n$ of
the $i$-th factor and 
with base space $X$. Moreover $Y$ is isomorphic as a $\TT^n$-bundle to
$\alpha_1^*Y_1*\cdots \alpha_n^*Y_n$, where $\alpha_i^*Y_i$ is the
$\TT^n$-bundle induced from $Y_i$ by $\alpha_i$. In consequence, $Y$
is completly determined (up to isomorphism of $\TT^n$-bundle) by the
first Chern classes of the {principal $\TT$-bundle} $Y_i$ and $Y$
 is {a trivial $\TT^n$-bundle} if and only if all
the $Y_i$  are trivial.
\begin{prop}\label{prop-Tn-trivial} 
Let $Y\stackrel{q}{\to}X$ be a principal $\TT^n$-bundle. Then the
$(0,1)$-degre component of the differential
$$d_2^{0,1}:H^0(X,K^{1}(Y_x))\to H^2(X,K^{0}(Y_x))$$ on $E^{*,*}_2(Y)$
vanishes if and
only if $Y\stackrel{q}{\to}X$ is trivial.
\end{prop}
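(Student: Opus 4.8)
The plan is to reduce this $\TT^n$-statement to the $\TT$-case already settled in Proposition \ref{prop-trivial}, using the $n$ quotient $\TT$-bundles $Y_i = Y/\TT^n_i$ and naturality of the spectral sequence. I assume $X$ connected (otherwise argue component by component), so that the monodromy on the fibre $K$-theory is trivial and $E_2^{0,q}(Y) = H^0(X, K^q(\TT^n)) = K^q(\TT^n)$. For each $i$ the quotient map $\pi_i: Y \to Y_i$ is a proper bundle map over $X$ which on fibres is the coordinate projection $\TT^n \to \TT^n/\TT^n_i \cong \TT$; hence $\pi_i^*: C_0(Y_i) \to C_0(Y)$ is a $C_0(X)$-linear $*$-homomorphism between two locally trivial, hence (Remark \ref{rem-fibration}(3)) $\RKK$-, in particular $K_*$-, fibrations. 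By Corollary \ref{cor-spectral} the class $[\pi_i^*] \in \RKK(X; C_0(Y_i), C_0(Y))$ induces a morphism of Leray-Serre spectral sequences $\pi_i^*: E_r^{*,*}(Y_i) \to E_r^{*,*}(Y)$. On fibre $K$-theory the induced map $\pi_i^*: K^0(\TT) = \ZZ \to K^0(\TT^n)$ is $[1] \mapsto [1]$, the split inclusion of the rank summand; therefore $\pi_i^*: E_2^{2,0}(Y_i) = H^2(X, \ZZ) \to E_2^{2,0}(Y) = H^2(X, K^0(\TT^n))$ is (split) injective.

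For the forward direction I would invoke naturality of $d_2$ in the square
$$
\begin{CD}
E_2^{0,1}(Y_i) @>d_2>> E_2^{2,0}(Y_i)\\
@V\pi_i^*VV @VV\pi_i^*V\\
E_2^{0,1}(Y) @>d_2>> E_2^{2,0}(Y).
\end{CD}
$$
If $d_2^{0,1}$ for $Y$ vanishes, commutativity gives $\pi_i^* \circ d_2^{0,1}(Y_i) = 0$, and injectivity of the right-hand vertical map forces $d_2^{0,1}(Y_i) = 0$. Proposition \ref{prop-trivial} then makes each $Y_i$ a trivial $\TT$-bundle, and since (as recalled before the statement) $Y \cong \alpha_1^* Y_1 * \cdots * \alpha_n^* Y_n$ is trivial precisely when all $Y_i$ are, $Y$ is trivial. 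The point is that this argument never needs the fibre classes pulled back from the $Y_i$ to generate $K^1(\TT^n)$ — which would fail for $n \geq 3$, where $K^1(\TT^n)$ has rank $2^{n-1} > n$; one only evaluates the (vanishing) differential against the images of $\pi_i^*$.

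For the converse, a trivial bundle $Y \cong X \times \TT^n$ admits the second-factor projection $p: Y \to \TT^n$, and $p^*: K^*(\TT^n) \to K^*(Y)$ splits the fibre-restriction map. Thus the edge homomorphism $K^*(Y) \to E_2^{0,*}(Y)$ is surjective, so $E_\infty^{0,*} = E_2^{0,*}$ and every differential leaving the zeroth column vanishes; in particular $d_2^{0,1} = 0$.

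The step I expect to cost the most care is purely organizational: verifying that the $C_0(X)$-linear homomorphism $\pi_i^*$ genuinely induces the claimed bidegree-preserving morphism of the exact-couple spectral sequence of Proposition \ref{prop-exact1} (this is exactly the content of Corollary \ref{cor-spectral}, once both bundles are seen to be locally trivial, hence $\RKK$-fibrations) and that the induced map on $E_2^{2,0}$ is injective, which I reduce to the split injectivity of $[1] \mapsto [1]$ on $K^0$. Everything else is formal naturality together with Proposition \ref{prop-trivial}.
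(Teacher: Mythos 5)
Your proof is correct, and its forward half is exactly the paper's argument: naturality of $d_2$ under the morphism of spectral sequences induced by $Y\to Y_i$, injectivity of the right-hand vertical map because $K^0(Y_{i,x})\to K^0(Y_x)$, $[1]\mapsto[1]$, is a split inclusion, and then Proposition~\ref{prop-trivial} applied to each $Y_i$ together with the fact that $Y$ is trivial iff all $Y_i$ are. Where you genuinely depart from the paper is the converse. The paper never mentions the edge homomorphism; it argues multiplicatively: $d_2$ is a derivation for the ring structure on $K^*(Y_x)$, and this ring is generated as a unital algebra by the images of the $K^*(Y_{i,x})$ under the fibre projections, so once triviality of $Y$ (hence of each $Y_i$) kills $d_2^{0,1}$ on those images --- by Proposition~\ref{prop-trivial} and naturality --- it is killed on all of $H^0(X,K^1(Y_x))$. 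Your route, by contrast, uses that for $Y\cong X\times\TT^n$ the projection $p:Y\to\TT^n$ splits restriction-to-fibre, so the edge map $K^*(Y)\to E_2^{0,*}(Y)$ is onto and every class in the zeroth column is a permanent cycle. This is more elementary and, in this particular framework, arguably on safer ground: multiplicativity of the Leray--Serre sequence is classical for spaces but is asserted rather than established for the paper's C*-algebraic exact-couple construction, whereas the permanent-cycle property of classes restricted from the total algebra is immediate from the exact couple (if $e$ is the restriction to $A_0$ of $\xi\in h_q(A)$, the lift of $e$ needed to compute $d_r$ may be taken to be $\xi|_{A_{r-1}}$, whose boundary in $h_{q-1}(A_{r,r-1})$ vanishes because $\xi|_{A_{r-1}}$ extends over $A_r$). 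What the paper's multiplicative argument buys in exchange is the stronger structural statement exploited in Remark~\ref{rem-triv}: $d_2$ is determined by the Chern classes on the whole $E_2$-page, not just at the $(0,1)$-entry.

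Two small corrections to your write-up. First, trivial monodromy on $K^*(Y_x)$ is not a consequence of connectedness of $X$ but of the principal-bundle structure: the transition functions take values in the connected group $\TT^n$, so they act trivially on the $K$-theory of the fibre (and in your converse one can even bypass this, since split surjectivity of restriction-to-fibre forces the invariants to be all of $K^*(Y_x)$). Second, you do not need Corollary~\ref{cor-spectral} or any $\RKK$ machinery to produce the morphism of spectral sequences: a $C_0(X)$-linear $*$-homomorphism restricts compatibly to all skeleta and their differences, hence induces a morphism of exact couples by plain functoriality of $h_*$ and naturality of the long exact sequences; this lighter fact is what the paper implicitly invokes.
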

\begin{proof}
Since $K^*(Y)$ is equipped with an algebra structure, $d_2$ is a map
of differential algebra. The  unital algebra $K^*(Y_x)$ being generated by
the image of $K^*(Y_{i,x})$ under the morphism induced by the
projection map $Y\to Y_i$, the map $d_2^{0,1}$ is completly determined
by the image of elements coming from $K^*(Y_{i,x})$.
The projection map $Y\to Y_i$ provides a morphism of spectral
sequences
$(E^{p,q}_r(Y_i),d_r) \longrightarrow
(E^{p,q}_r(Y),d_r)$.
In particular, we get a
commutative diagram
$$\begin{CD} H^0(X,K^{1}(Y_{x,i}))  @>d_2^{0,1}>>  H^2(X,K^{0}(Y_{x,i}))\\
@VVV      @VVV\\
H^0(X,K^{1}(Y_x))  @>d_2^{0,1}>>  H^2(X,K^{0}(Y_x)),
\end{CD}$$where the vertical arrows are induced by the projection  of {the} fiber
{$\pi_i:Y_{x}\rightarrow Y_{i,x}$}. 
Since the inclusion $K^0(Y_{i,x})\to K^0(Y_x)$ is injective (since it sends $[1]$ to $[1]$)
the right vertical map is injective, too. Thus
it follows from  Proposition \ref{prop-trivial} that the
range of the left vertical map of the diagram lies
in the kernel of $d_2^{0,1}:H^0(X,K^{1}(Y_x))\to H^2(X,K^{0}(Y_x))$ if
and only if $Y_i$ is trivial.
\end{proof}

\begin{remark}\label{rem-triv}\
\begin{enumerate}
\item If $Y\cong X\times \TT^n$ is the trivial $\TT^n$-bundle over $X$, then
$d_2:E_2^{*,*}\to E_2^{*,*}$ vanishes completely. To see this, we
can use the K\"unneth formula in $K$-theory to show
that it is enough to 
prove the result for the Leray-Serre spectral sequence associed to $C_0(X)$,
i.e., the Hirzebruch spectral sequence for the $K$-theory of $X$. Then
$E^{p,q}_2(X)=H^p(X,\ZZ)$ if $p-q$ is even and $E^{p,q}_2(X)=0$ if
$p-q$ is odd. Since $d_2$ maps $E^{p,q}_2(X)$ to $E^{p+2, q+1}_2(X)$,
it follows that either the source or the target of this map must be zero.
Thus $d_2=0$. As a direct consequence of this observation and of Proposition \ref{prop-Tn-trivial}
we now see that a principal $\TT^n$-bundle $Y\stackrel{q}{\to}X$ is trivial if
and only if all $d_2$-differentials in the associated spectral
sequence vanish.
\item {More generally}, if $Y\stackrel{{q}}{\to} X$ is a {principal} $\TT^n$-bundle, then
   $K^*(Y)$ is endowed with a $K^*(X)$-module  structure. Then
   $\oplus E^{p,q}_2(Y)=\oplus H^p(X,K^{p-q}(Y_x){)}$ has a $\oplus
   H^p(X,K^{p-q}(\{*\}){)}$-module  structure provided by the cup
   product. Thereby,  $\oplus H^0(X,K^{p-q}(Y_x){)}$ being a generating
   set and since the differential  $d_2$ is $\oplus
   H^p(X,K^{p-q}(\{*\}){)}$-linear, then $d_2$ is completly determined
   by the Chern classes $c_i$.
\end{enumerate}
\end{remark}

\begin{proof}[Proof of Theorem \ref{thm-triv}]
Using  the fact that $\RKK$-equivalence induces 
an equivalence of spectral sequences, the result is 
now a direct consequence of Theorem \ref{thm-ENO} and the above remark.
\end{proof}

A natural question is then: 
let $A(X)$ be  a NCP-$\TT^n$-bundle  with classifying data
{$([Y], f)$}. Can we recover from the { exact sequence
$\{E_r^{*,*}(A)),d_r\}$ derived from the {$K_*$-fibration} $A(X)$} any information concerning $Y$?
{As we shall see below  this is not  always the case.}

{ Using the above notations, we let $q_i:Y\to
Y_i=Y/\TT^n_i$ denote the quotient map. There are canonical $C_0(X)$-linear 
$*$-homomorphisms $\Lambda_i:C(Y_i)\to A(X)=Y*f^*(C^*(H))(X)$ given as follows:
First we define a 
$*$-homomorphism $\tilde{\Lambda}_i:C_0(Y_i)\to \big(C_0(Y)\otimes C({\TT}))^{\TT^n}$ by 
$\big(\tilde{\Lambda}_i(\phi)\big)(y,t)=\phi(tq_i(y))$, where
{$\TT$ is acted upon by $\TT^n$ using the  projection on the
$i$-th component.} 
Next we identify $C(\TT_i)$ with $C^*(U_i)$ via functional calculus to
obtain from this a  
well defined $C_0(X)$-linear 
$*$-homomorphism (also called $\tilde{\Lambda}_i$) from 
$C_0(Y_i)$ to  $\big(C_0(Y)\otimes C^*(U_i)\big)^{\TT^n}$.
Since $U_i$ commutes with all $W_{kj}$, the $C^*$-algebra $C^*\langle
U_i,\,W_{kj};\,1\leq k<j\leq n\rangle$ is a $C(\TT^{n(n-1)/2})$-subalgebra of
$C^*(H_n)$ isomorphic to $C(\TT^{n(n-1)/2})\otimes  C^*(U_i)$ and
hence, as $C_0(X)$-algebras,
we can identify  
$\left(C_0(Y)\otimes
C^*(U_i)\right)^{\TT^n}\cong \left(C_0(Y)\otimes_{C_0(X)}C(\TT^{n(n-1)/2})\otimes  C^*(U_i)\right)
)^{\TT^n}$ with a subalgebra of
$
Y*f^*(C^*(H_n))=\left(
 C_0(Y)\otimes_{C_0(X)}f^*{C^*(H_n)}\right)
^{\TT^n}
$.  
The map $\Lambda_i$ is then given by the composition of
$\tilde{\Lambda}_i$ with this 
inclusion.}

{The morphism $\Lambda_i:C_0(Y_i)\to A(X)$} induces a morphism
$$\{\Lambda_{i,r}^{p,q}:{E_{r}^{p,q}(Y_i)\to E_{r}^{p,q}(A)}\}$$ of spectral
sequences. At the $E_2$-term,  the   morphism
$$\Lambda_{i,2}^{p,q}:H^p(X,K^{p-q}(\TT))\to
H^p(X,\mathcal{K}_{p-q}(A))$$ is induced by
the morphism of group bundles $$(\Lambda_{i,x,*})_{x\in X}: X\times
K^*(\TT)\to \mathcal{K}_*(A).$$ In particular, if $c_i\in H^2(X,\ZZ)$
is the Chern class of $Y_i$ then using the notations of Lemma \ref{lem-chern} {we get}
\begin{equation}\label{equ-d2cc}
d_2^{0,1}(\Lambda_{i,2}^{0,1}([\omega_\nu])=\Lambda_{i,2}^{2,0}(c_i).
\end{equation}

According to \cite{DK}, if $X$ is path connected  with base point $x$,
then the
cohomology group $H^*(X,\mathcal{K}_{*}(A))$ can be described in the
following way: fix  a simplicial decomposition of  $X$
and lift it to a $\pi_1(X)$-invariant  simplicial decomposition of
$\widetilde{X}$. Let $\SS_*(\widetilde{X})$ be the simplicial complex obtained from this
simplicial decomposition of $\widetilde{X}$. Then $\SS_*(\widetilde{X})$ is endowed with an
action of $\Gamma=\pi_1(X)$ {by automorphisms} and $H^*(X,\mathcal{K}_{*}(A))$ is then
the  cohomology
of the complex $\operatorname{Hom}_{\Gamma}(\SS_*(\widetilde{X}),K_*(A_x))$ of 
$\Gamma$-equivariant homomorphisms from $\SS_*(\widetilde{X})$ to
$K_*(A_x)$.
In particular, in  degree zero {we get}
$$H^0(X,\mathcal{K}_{*}(A))=\operatorname{Inv}_{\Gamma}K_*(A_x),$$
where for  an abelian group  $N$ equipped with an action of $\Gamma$,
$\operatorname{Inv}_{\Gamma}\,N$ stands for the set of
$\Gamma$-invariant elements of
$N$. Since we will need it later on, we can also define at this point the
coinvariant for  $N$ to be
$\operatorname{Coinv}_{\Gamma}\,N=N/{\langle x-\gamma x;x\in N\text{ and
}\gamma\in\Gamma\rangle}$.

Using this, and  noticing   that the classes $\{[U_{1,x}],\ldots,[U_{n,x}]\}$
of  $K_1(A_x)$ are invariant, we get that
$\Lambda_{i,2}^{0,1}([\omega_\nu])=[U_{i,x}]$ and thus according to
equation \ref{equ-d2cc} we finally obtain that
$d_2^{0,1}([U_{i,x}])=\Lambda_{i,2}^{2,0}(c_i)$.
{Thus we can find the first Chern classes of the $Y_i$ in our spectral sequence
iff  $\Lambda_{i,2}^{2,0}(c_i)$ does not vanish. However, 
as we shall see  below, the map $\Lambda_{i,2}^{2,0}$ is not  injective in
general. }

The end of the section is devoted to the study of   the spectral sequences  of
NCP $\TT^2$-bundles with base $\TT^2$. In case where 
the underlying  function $f:\TT^2\to \TT$ is homotopic to a constant, we get a complete 
description by Remark \ref{rem-triv}. If $f$ is not homotopic to a constant, then 
the only part of the differential $d_2$ which does not vanish automatically is
$$d_2^{0,1}:H^0(\TT^2,K^1(\TT^2))\to H^2(\TT^2,\mathcal{K}_0(A)).$$ 
since we shall see below
that $H^0(\TT^2,\mathcal K_0(A))$ is given by the invariants in $K_0(\TT^2)$ under 
the action of $\ZZ^2\cong \pi(\TT^2)$, and hence is generated by the class $[1]$ of the unit.
Since this class trivially extends to a class in $K_0(A(X))$, it must vanish under any 
differential in the spectral sequence.

To proceed let us first remark that if
$F:[0,1]\times X\to \TT^{n(n-1)/2}$ is a homotopy between $f_0: X\to
 \TT^{n(n-1)/2}$ and $f_1: X\to \TT^{n(n-1)/2}$ and if $q:Y\to X$ is
 any $\TT^n$-bundle, then {$\big(Y\times [0,1]\big)* F^*(C^*(H_n))$} is a
     homotopy of $NCP$-$\TT^n$-bundles and thus according to
     \cite[Proposition 3.2]{ENO},  $Y* f_0^*(C^*(H_n))$ and $Y* f_1^*(C^*(H_n))$ are
     $\RKK$-equivalent.

For $X=\TT^2$, the classifying data are {$([Y\stackrel{q}{\to} \TT^2],f)$}, where
$f:\TT^2\to\TT$ is a continuous function. According to the previous
remark, we can replace $f$ by a homotopic function and thus we can
assume that there exist {integers $k$ and $l$} such that
$f(z_1,z_2)=z_1^kz_2^l$ for every $(z_1,z_2)$ in $\TT^2$.  {Let us compute}  $H^*(\TT^2,\mathcal{K}_0(A))$.  {We have
$\pi_1(\TT^2)\cong\ZZ^2$ with action of the
generators $(1,0)$ and $(0,1)$ of $\ZZ^2$ on $K^0(\TT^2)$ in the base
$([1],\beta)$  given by
the matrices $\left(\begin{smallmatrix} 1&k\\0&1\end{smallmatrix}\right)$ and
$\left(\begin{smallmatrix} 1&l\\0&1\end{smallmatrix}\right)$, 
respectively (see \cite[Proposition 5.2]{ENO})}.

Let us fix  a simplicial decomposition of $\TT^2$. Then 
since $\RR^2\to\TT^2$ is 
the classifying covering for $\ZZ^2$, the simplicial complex   $\SS(\RR^2)$
is a free resolution for $\ZZ[\ZZ^2]$ and hence for any abelian group
$M$ equipped with an action of $\ZZ^2$, the cohomology of the
{complex} $\operatorname{Hom}_{\ZZ^2}(\SS(\RR^2),M)$ is naturally isomorphic
to $H^*(\ZZ^2,M)$. Recall from \cite{FH} that for an abelian group
$M$ equipped with an action of $\ZZ^n$, the cohomology group
$H^*(\ZZ^n,M)$ can be computed recursively in the following way:
\begin{itemize}
\item For $n=0$ we have  that  
$H^0(\ZZ^n,M)\cong M$ and $H^k(\ZZ^n,M)=\{0\}$ for $k\geq 1$.
\item Let us consider the action of $\ZZ^{n-1}$ on $M$ using the $n-1$
 last factors of $\ZZ^n$. Then the action of the first factor of
 $\ZZ^n$ induces an action of $\ZZ$ on $H^k(\ZZ^{n-1},M)$ and there
 is an natural exact sequence
$$0\longrightarrow\operatorname{Coinv}_\ZZ\, H^k(\ZZ^{n-1},M) \longrightarrow
H^k(\ZZ^{n},M)\longrightarrow\operatorname{Inv}_\ZZ\, H^{k-1}(\ZZ^{n-1},M))\longrightarrow
0.$$ 
\end{itemize}
From this, it is straightforward to check that $H^n(\ZZ^n,M)$ is
naturally isomorphic to $\operatorname{Coinv}_{\ZZ^n}\,M$. {In the
case $M=\ZZ$ equipped with the trivial action of $\ZZ^n$, the
corresponding 
identification   $H^n(\ZZ^n,\ZZ)\cong
\operatorname{Coinv}_{\ZZ^n}\,\ZZ\cong\ZZ$ is given by pairing with
the fundamental class of $H_n(\ZZ^n,\ZZ)$.} Under the identification
$H_*(\ZZ^n,\ZZ)\cong H_*(\TT^n,\ZZ)$, this class can be viewed as the
fundamental class $[\TT^n]$ of $H_n(\TT^n,\ZZ)$.

{Combining  all this}, we are now in the position to describe the $d_2$
map of the spectral sequence derived from a NCP $\TT^2$-bundle $A(\TT^2)$
with  classifying data $([Y\stackrel{q}{\to} \TT^2], f)$. Let
$k$ be the greatest {common divisor} of the  winding numbers of the two components
of  $f$. We can assume that $k\neq 0$, otherwise $f$ is
homotopic to a constant map and thus $A(\TT^2)$ is $\RKK$-equivalent to
$C(Y)$. Then
\begin{itemize}
\item $$H^2(\TT^2,\mathcal{K}_0(A))\cong
 \operatorname{Coinv}_{\ZZ^2}\, K^0(\TT^2)\cong \ZZ/ k \ZZ\oplus
 \ZZ,$$  where the image in $\operatorname{Coinv}_{\ZZ^2}\,
 K_0(\TT^2)$ of the class $[1]\in K_0(\TT^2)$ 
 is a generator for $\ZZ/ k \ZZ$, and where the image
 of the Bott element  $\beta\in K_0(\TT^2)$ is a generator for $\ZZ$.
\item Up to this identification, {$d_2^{0,1}$ has range in
 $\ZZ/k\ZZ$ } and 
$$d_2^{0,1}([U_{i,z}])=\langle c_i,[\TT^2]\rangle\mod k,$$ for
$z=f(1,1)$, $i=1,2$, and where $c_i$ is the Chern class of the
$\TT$-bundle $Y/\TT^2_i\to X$.
\end{itemize}
\begin{remark}
\
\begin{enumerate}
\item In particular, for  {the function $f(z_1,z_2)=z_1$}, the $d_2$ map vanishes for any
{principal} $\TT^2$-bundle $Y\to \TT^2$. We actually do not know at this stage
 whether all bundles corresponding to the function $f(z_1,z_2)=z_1$ are $\RKK$-equivalent,
 so we cannot answer the general question, whether two NCP-bundles with isomorphic spectral sequences must be $\RKK$-equivalent. We plan to investigate this question in future work.
  \item {The above computation can  be carried out} for any compact
 oriented surface $X$. The reason  is that the fundamental group 
 $$\Gamma=\langle a_i,b_i :i=1,\dots n;\prod_{i=1,\dots
   n} [a_i,b_i]=1\rangle$$
   (where $n$ is {the} genus of $X$) of 
 such surface satisfies Poincar\'e duality and thus 
$$H^2(X,\mathcal{K}_0(A))\cong
 \operatorname{Coinv}_{\Gamma}\, K_0(A_x)\cong \ZZ/ k \ZZ\oplus
 \ZZ,$$
 where $k$ is the greatest common {divisor} of
 $\langle f,a_i \rangle,\, \langle f,b_i \rangle$, $i=1,\dots
   n$, {where for any $\gamma$ in $\Gamma$, the integer    $\langle
 f,\gamma \rangle$ is the winding number of $f\circ h$ for a 
map  $h:\TT\to X$ representing the element $\gamma$.}
\end{enumerate}
\end{remark}

\def\mathcs{{\normalshape\text{C}}^{\displaystyle *}}

\end{document}